\date{\today}
\newcommand{\R}{{\mathbb R}}
\newcommand{\N}{{\mathbb N}}
\newcommand{\mK}{\mathcal{K}}  
\newcommand{\mN}{\mathcal{N}}
\newcommand{\Man}{\mathcal{M}}
\newcommand{\st}{\widetilde{S}}
\newcommand{\pd}{s}
\newcommand{\Lip}{{\mathrm{Lip}}}
\newcommand{\SL}{{\mathrm{SL}}}
\newcommand{\Rot}{{\mathrm{Rot}}}
\newcommand{\dd}{\, \mathrm{d}}
\newcommand{\pp}{[\cdot]}
\newcommand{\rep}{\mathbf{r}}
\newcommand{\CM}{C_{\Man}}
\newcommand{\mKM}{\mK_{\Man}}
\newcommand{\txi}{\widetilde{\xi}}
\newcommand{\tR}{\widetilde{R}}
\newcommand{\bC}{\overline{C}}
\newcommand{\RP}{\mathbb{RP}}
\newcommand{\avg}{a}
\newcommand{\Ind}{\mathbf{1}}
\newcommand{\Cxi}{C_{2}}
\newcommand{\CVar}{C_{\sigma}}
\newcounter{mit}
\newcounter{Bit}
\newtheorem{theorem}{Theorem}[section]
\newtheorem{lemma}[theorem]{Lemma}
\newtheorem{prop}[theorem]{Proposition}
\newtheorem{coro}[theorem]{Corollary}
\theoremstyle{definition}
\newtheorem{remark}[theorem]{Remark}
\newtheorem{example}[theorem]{Example}
\newtheorem{defi}[theorem]{Definition}
\newcommand{\emr}[1]{\textcolor{black}{#1}}
\newcommand{\mgr}{\mu}
\newcommand{\msp}{\nu}
\newcommand{\rad}{\rho}
\def\dist{{\rm dist}}
\def\supp{\mathop{\rm supp}}
\newcommand{\bi}{{\bf i}}
\def\N{{\mathbb N}}
\newcommand{\E}{{\mathbb E}\,}
\newcommand{\Prob}{{\mathbb P}\,}
\newcommand{\var}{\mathop{\rm Var}\nolimits}
\def\P{\Prob}
\def\Homeo{{\text{Homeo}}}
\def\tD{\tilde D}
\def\tB{\tilde B}
\def\bK{{\bf K}}
\newcommand{\eps}{{\varepsilon}}
\newcommand{\const}{{\rm const}}
\begin{document}

\title[Central Limit Theorem for non-stationary random matrix products]{Central Limit Theorem for non-stationary random products of $\SL(2, \R)$ matrices}


%

\author[A.\ Gorodetski]{Anton Gorodetski}

\address{Department of Mathematics, University of California, Irvine, CA~92697, USA}

\email{asgor@uci.edu}

\thanks{A.\ G.\ was supported in part by NSF grant DMS--2247966.}

\author[V. Kleptsyn]{Victor Kleptsyn}

\address{CNRS, Institute of Mathematical Research of Rennes, IRMAR, UMR 6625 du CNRS}

\email{victor.kleptsyn@univ-rennes1.fr}

\thanks{V.\ K.\ was supported in part by ANR Gromeov (ANR-19-CE40-0007) and by Centre Henri Lebesgue (ANR-11-LABX-0020-01)}

\author[G. Monakov]{Grigorii Monakov}

\address{Department of Mathematics, University of California, Irvine, CA~92697, USA}

\email{gmonakov@uci.edu}

\begin{abstract}
We prove Central Limit Theorem for non-stationary random products of $\SL(2, \mathbb{R})$ matrices, generalizing the classical results by Le Page and Tutubalin that were obtained in the case of iid random matrix products.
\end{abstract}

%
%

\maketitle


%


\section{Introduction}

\subsection{Historical background}

The two most fundamental results in probability that are present in almost every textbook are the (strong) Law of Large Numbers (LLN) and the Central Limit Theorem (CLT). In the most basic form, if $\{\xi_n\}$ is an iid sequence of random variables with finite expectation $\avg$ and finite variance $\sigma^2$, the LLN claims that almost surely $\frac{1}{n}\sum_{i=1}^n \xi_i\to \avg$, and the CLT claims that $\frac{\sum_{i=1}^n \xi_i-n\avg }{\sqrt{n}\sigma}$ converges in distribution to a normal distribution $\mathcal{N}(0,1)$ with mean~$0$ and variance~$1$.

There are many ways to relax the assumptions in both cases. In particular, the random variables do not have to be identically distributed. For example, a non-stationary version of the LLN known as {\it Kolmogorov's Law} \cite{Kol} claims that
if $\{\xi_i\}$ is a sequence of independent random variables with $\avg_i=\mathbb{E}\xi_i$, $\sigma_i^2=\text{\rm Var}\,(\xi_i)$, and $\sum_{i=1}^\infty \frac{\sigma_i^2}{i^2}<\infty$, then $\frac{\sum_{i=1}^n(\xi_i-\avg_i)}{n}\to 0$ almost surely. On the other hand, if for some $\delta>0$ the sequence $\mathbb{E}|\xi_i|^{2+\delta}$ is uniformly bounded, then the sequence of random variables $\frac{\sum_{i=1}^n(\xi_i-\avg_i)}{\sum_{i=1}^n\sigma^2_i}$
converges in distribution to $\mathcal{N}(0,1)$.

There are plenty of different generalizations and forms of these statements. For example, for some of the analogs of the LLN and CLT for the sums of iid random variables in the context of random walks on groups see the survey \cite{F} and monograph \cite{BQ2}, and references therein. Here we discuss random matrix products. In this case, a multiplicative version  of the LLN is given by Furstenberg and Kesten \cite{FK}. A stronger result is the famous Furstenberg Theorem, which also guarantees positivity of the Lyapunov exponent:

\begin{theorem}[H. Furstenberg \cite{Fur1}]\label{t.F}
Let $\{X_k, k\ge 1\}$ be independent and identically distributed random variables, taking values in $\SL(d, \mathbb{R})$, the $d\times d$ matrices with determinant one, let $G_X$ be the smallest closed subgroup of $\SL(d, \mathbb{R})$ containing the support of the distribution of $X_1$, and assume that
$$
\mathbb{E}[\log\|X_1\|]<\infty.
$$
Also, assume that $G_X$ is not compact and is strongly irreducible, i.e. there exists no $G_X$-invariant finite union of proper subspaces of~$\R^d$. Then there exists a positive constant $\lambda_F$ (Lyapunov exponent) such that with probability one
$$
\lim_{n\to \infty}\frac{1}{n}\log\|X_n\ldots X_2X_1\|=\lambda_F>0.
$$
\end{theorem}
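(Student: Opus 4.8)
The plan is to follow Furstenberg's original two‑step strategy: first that the limit exists (this part is the Furstenberg--Kesten theorem), then that it is strictly positive, where all the difficulty lies. I will describe the positivity argument for $d=2$, the case relevant to this paper and the one in which it is cleanest; the general case follows the same lines, working on $\mathbb P^{d-1}$ and, where needed, on exterior powers. For the existence, set $S_n=X_n\cdots X_1$ and $a_n(\omega)=\log\|S_n(\omega)\|$. Since $\|AB\|\le\|A\|\,\|B\|$, the sequence $(a_n)$ is subadditive over the shift on $(\SL(d,\R)^{\N},\mu^{\N})$, where $\mu$ is the law of $X_1$; and $a_1=\log\|X_1\|$ is non‑negative (as $\|g\|\ge1$ for $g\in\SL(d,\R)$) and integrable by hypothesis. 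Kingman's subadditive ergodic theorem then gives $\tfrac1n a_n\to\lambda_F:=\inf_n\tfrac1n\mathbb E[a_n]$ almost surely and in $L^1$, with $\lambda_F\ge0$; it remains to rule out $\lambda_F=0$.

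The group $\SL(2,\R)$ acts on the compact space $\mathbb P^1=\mathbb P(\R^2)$, so Krylov--Bogolyubov (averaging $\tfrac1N\sum_{k<N}\mu^{*k}*\delta_{\bar x}$) furnishes a $\mu$‑stationary probability measure $\nu$ on $\mathbb P^1$, i.e.\ $\int g_*\nu\,d\mu(g)=\nu$. Two properties of $\nu$ will be used. First, \emph{Furstenberg's inequality}: with $\sigma(g,\bar v):=\|gv\|/\|v\|$,
\[
\iint \log\sigma(g,\bar v)\,d\mu(g)\,d\nu(\bar v)\ \le\ \lambda_F,
\]
which holds because, for a unit vector $v$ with $\bar v$ distributed by $\nu$ and independent of the walk, stationarity makes the left‑hand side equal to $\tfrac1n\mathbb E\big[\log\|S_n v\|\big]$ for every $n$, while $\log\|S_n v\|\le\log\|S_n\|$. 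Second, \emph{the irreducibility lemma}: strong irreducibility forces $\nu$ to charge no proper subspace, hence for $d=2$ to be non‑atomic --- otherwise the lines carrying maximal $\nu$‑mass would form a finite $G_X$‑invariant set.

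Now suppose, for contradiction, that $\lambda_F=0$. The measures $M_n:=(X_1\cdots X_n)_*\nu$ form a bounded martingale for the natural filtration (here stationarity of $\nu$ is exactly what is needed), hence converge weak‑$*$ almost surely to a random probability measure $M_\infty$ with $\mathbb E[M_\infty]=\nu$. Introduce the logarithmic energy $e(\eta):=\iint-\log\delta(\bar x,\bar y)\,d\eta(\bar x)\,d\eta(\bar y)$ on $\mathbb P^1$, where $\delta(\bar x,\bar y):=|\det(x,y)|/(\|x\|\,\|y\|)$. Since $\det g=1$ one has the cocycle identity $-\log\delta(\overline{gx},\overline{gy})=-\log\delta(\bar x,\bar y)+\log\sigma(g,\bar x)+\log\sigma(g,\bar y)$, which upon integration and iteration yields $\mathbb E[e(M_n)]=e(\nu)+2n\iint\log\sigma\,d\mu\,d\nu$; as $e\ge0$ and the integral is $\le\lambda_F=0$, the integral must vanish and $\mathbb E[e(M_n)]\equiv e(\nu)$. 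Since $e$ is lower semicontinuous and \emph{strictly} convex on probability measures (the logarithmic kernel on $\mathbb P^1$ is strictly positive definite on mean‑zero measures), Fatou together with Jensen forces $\mathbb E[e(M_\infty)]=e(\nu)=e(\mathbb E M_\infty)$, whence $M_\infty=\nu$ almost surely; as $(M_n)$ is a uniformly integrable martingale converging to the constant $\nu$, in fact $M_n\equiv\nu$, so $(X_1)_*\nu=\nu$ almost surely, i.e.\ $g_*\nu=\nu$ for $\mu$‑a.e.\ $g$. The set $\{g:g_*\nu=\nu\}$ being a closed subgroup containing $\supp\mu$, the measure $\nu$ is $G_X$‑invariant. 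But a non‑atomic $G_X$‑invariant probability measure on $\mathbb P^1=\partial\mathbb H^2$ has a conformal (Douady--Earle) barycenter in $\mathbb H^2$ that $G_X$ must fix, so $G_X$ lies in a point stabilizer, which is compact --- contradicting the non‑compactness of $G_X$. Hence $\lambda_F>0$.

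The main obstacle is the positivity step, and within it the rigidity ``$\lambda_F=0\Rightarrow M_\infty=\nu$''. The convexity argument as stated requires the stationary measure to have \emph{finite} logarithmic energy, which non‑atomicity alone does not guarantee; so one must either establish enough regularity of $\nu$ or run the argument with a mildly regularized (e.g.\ rotation‑smoothed) measure, and one must treat the weak‑$*$ convergence of the measure‑valued martingale with some care. The remaining ingredients --- Kingman's theorem, existence of a stationary measure, the irreducibility lemma, and the rigidity of invariant measures on $\partial\mathbb H^2$ --- are all standard.
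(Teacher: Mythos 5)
The paper does not prove Theorem~\ref{t.F} at all --- it is quoted as Furstenberg's classical result --- so your proposal has to be judged against the standard proofs. Your skeleton (Kingman for existence; a stationary measure $\nu$ on $\mathbb{P}^1$; Furstenberg's inequality; non-atomicity from strong irreducibility; the measure-valued martingale $M_n=(X_1\cdots X_n)_*\nu$; rigidity forcing $g_*\nu=\nu$ for $\mu$-a.e.\ $g$; contradiction with non-compactness) is exactly the classical route, and those steps are fine. The genuine gap is the rigidity step, where you replace the standard argument by a logarithmic-energy computation. The identity $\mathbb{E}[e(M_n)]=e(\nu)+2nI$ and the Fatou/Jensen strict-convexity conclusion are vacuous unless $e(\nu)<\infty$, and --- as you yourself note --- non-atomicity gives no such bound. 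Under the stated hypothesis (only $\mathbb{E}\log\|X_1\|<\infty$) no modulus of continuity for $\nu$ is available (H\"older or log-H\"older regularity of stationary measures requires exponential or polynomial moment assumptions, as in the reference [M] used in this paper), so finiteness of the energy cannot simply be asserted. Moreover, your proposed fix by rotation-smoothing does not obviously repair this: once $\nu$ is replaced by $\rho_\eps*\nu$ it is no longer stationary, and stationarity is precisely what gives both the martingale property of $M_n$ and the telescoping $\mathbb{E}\int\log\sigma(S_n,\cdot)\,d\nu=nI$; with the smoothed measure one only gets one-sided $o(n)$ bounds, which do not yield $M_\infty\equiv\nu$. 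So as written the key implication ``$\lambda_F=0\Rightarrow\nu$ is a.e.\ invariant'' is not established; the standard proofs (Furstenberg's original argument, or the entropy/invariance-principle route as in Bougerol--Lacroix or Viana) obtain it without any regularity of $\nu$, by analyzing the a.s.\ limits of $(X_1\cdots X_n)_*\nu$ and the absence of projective contraction when the exponent vanishes.

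A secondary shortfall: the theorem is stated for $\SL(d,\R)$, but your argument is specific to $d=2$ --- the kernel $\delta(\bar x,\bar y)=|\det(x,y)|/(\|x\|\,\|y\|)$, its cocycle identity, and the Douady--Earle barycenter on $\partial\mathbb{H}^2$ all use the two-dimensional structure --- and ``the same lines on $\mathbb{P}^{d-1}$ and exterior powers'' is not a proof. (For the paper's purposes only $d=2$ matters, but the statement you were asked to prove is the general one.)
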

\begin{remark}
In the case of random products of $\SL(2, \mathbb{R})$ matrices, the assumption that~$G_X$ is not compact and is strongly irreducible is equivalent to the assumption that there exists no measure on $\mathbb{RP}^1$ invariant under the action of every map from $G_X$, see \cite[Lemma~3.6]{AB}.
\end{remark}

The CLT for the products of iid random matrices is also available. The initial results were obtained for matrices with positive coefficients \cite{Bel}, \cite{FK}.  In the case of absolutely continuous distributions it was obtained by Tutubalin \cite{T1, T2}. The requirements on regularity of distributions was relaxed by Le Page, who proved the CLT for random matrix products under the assumption of finite exponential moments \cite{L}, see also \cite{BL}, \cite{GG}, \cite{GR}, \cite{GM},~\cite{J}. Finally, the assumption on the moments of the distribution  was optimized by Benoist and Quint \cite{BQ1}:

\begin{theorem}[Benuist, Quint, \cite{BQ1}]\label{t.BQ}
    Let $\{X_k, k \ge 1\}$ be independent and identically distributed random matrices in $\SL(d, \R)$. 
    Assume that $G_X$ is non-compact and  
    \textit{strongly irreducible}
    and
    \begin{equation} \label{LogMom}
        \mathbb{E}\left[(\log \|X_1\|)^2 \right]
        < \infty.
    \end{equation}
    Then there exists $\sigma>0$ such that the random variables
    \begin{equation*}
        \frac{\log \|X_n\ldots X_1\|-n\lambda_F}{\sqrt{n}},
    \end{equation*}
    where $\lambda_F > 0$ is the Lyapunov exponent, converge in distribution to $\mathcal{N}(0,\sigma^2)$.
\end{theorem}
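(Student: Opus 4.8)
The plan is to deduce the theorem from a central limit theorem for an additive cocycle over the random walk induced on the projective space $\mathbb{P}(\R^d)$, which is in turn obtained through a martingale approximation. Let $\mu=\mathrm{Law}(X_1)$ and introduce the norm cocycle
\[
\sigma(g,x)=\log\frac{\|g v\|}{\|v\|},\qquad g\in\SL(d,\R),\ x=[v]\in\mathbb{P}(\R^d),
\]
which satisfies $\sigma(gh,x)=\sigma(g,h\cdot x)+\sigma(h,x)$. By strong irreducibility and non-compactness (cf.\ Theorem~\ref{t.F} and the remark following it) there is a unique $\mu$-stationary probability measure $\nu$ on $\mathbb{P}(\R^d)$; it has no atoms and assigns zero mass to every proper projective subspace, and Furstenberg's formula gives $\lambda_F=\int\!\!\int\sigma(g,x)\,d\mu(g)\,d\nu(x)$. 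Introduce the Markov chain $Z_0=x_0$, $Z_k=X_k\cdot Z_{k-1}$ on $\mathbb{P}(\R^d)$, with filtration $\mathcal{F}_k=\sigma(X_1,\dots,X_k)$, so that the cocycle relation yields $\sigma(X_n\cdots X_1,x_0)=\sum_{k=1}^n\sigma(X_k,Z_{k-1})$. The first reduction is to show that $\log\|X_n\cdots X_1\|-\sigma(X_n\cdots X_1,x_0)$ converges almost surely to a finite random variable, hence is bounded in probability and negligible after dividing by $\sqrt n$: indeed this difference equals $-\tfrac12\log\!\big(\langle v_0,u_1^{(n)}\rangle^2+(\text{exponentially small})\big)$, where $u_1^{(n)}$ is the leading right singular direction of $X_n\cdots X_1$, and $u_1^{(n)}$ converges a.s.\ to a direction whose law (non-atomic and null on proper subspaces) makes the inner product almost surely nonzero.

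The core of the argument is a martingale decomposition of $S_n:=\sum_{k=1}^n\sigma(X_k,Z_{k-1})$. Since $X_k\perp\mathcal{F}_{k-1}$, one has $\mathbb{E}[\sigma(X_k,Z_{k-1})\mid\mathcal{F}_{k-1}]=\sigma_\mu(Z_{k-1})$ with $\sigma_\mu(x)=\int\sigma(g,x)\,d\mu(g)$ (finite by \eqref{LogMom}), and $\int\sigma_\mu\,d\nu=\lambda_F$. One wants to solve the Poisson equation $\sigma_\mu-\lambda_F=\varphi-P\varphi$ for the Markov operator $P$ via $\varphi=\sum_{n\ge 0}(P^n\sigma_\mu-\lambda_F)$; then, setting $D_k:=\sigma(X_k,Z_{k-1})-\sigma_\mu(Z_{k-1})+\varphi(Z_k)-P\varphi(Z_{k-1})$, a direct telescoping gives $S_n-n\lambda_F=M_n+\varphi(Z_0)-\varphi(Z_n)$ with $M_n=\sum_{k=1}^n D_k$ a square-integrable martingale whose increments, when $Z_0\sim\nu$, are stationary and ergodic (ergodicity from uniqueness of $\nu$, square-integrability being exactly \eqref{LogMom} since $|\sigma(g,x)|\le C\log\|g\|$). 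The main obstacle is to make this work under the minimal moment assumption: $\sigma_\mu$ need not be continuous, and $P$ has no spectral gap on a Hölder space, so Le Page's classical route via quasi-compactness of the transfer operator is unavailable. Instead one must use the softer quantitative contraction of the projective walk — estimates of the form $\mathbb{E}\big[d(Z_n,Z'_n)^s\big]\le C\theta^n$ for small $s>0$ and coupled chains started from two points, together with the $L^2$ large-deviation control of $\sigma(X_n\cdots X_1,x)-n\lambda_F$ — to show that the series defining $\varphi$ converges in a sufficiently strong sense that $\varphi\in L^2(\nu)$ (in fact bounded up to a controlled correction) and that the decomposition error is $o(\sqrt n)$ in probability. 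This is the technical heart, and precisely where \eqref{LogMom} is genuinely used: the required contraction and large-deviation inputs hold under exactly this hypothesis.

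Granting the decomposition, the martingale central limit theorem for stationary ergodic differences (Billingsley--Ibragimov) applies: by the ergodic theorem $\tfrac1n\sum_{k=1}^n\mathbb{E}[D_k^2\mid\mathcal{F}_{k-1}]\to\sigma^2:=\mathbb{E}[D_1^2]$ almost surely, so $M_n/\sqrt n\Rightarrow\mathcal{N}(0,\sigma^2)$; the boundary terms $\varphi(Z_0)-\varphi(Z_n)$ are bounded in probability and vanish after dividing by $\sqrt n$, whence $\big(S_n-n\lambda_F\big)/\sqrt n\Rightarrow\mathcal{N}(0,\sigma^2)$, and by the first reduction the same holds for $\big(\log\|X_n\cdots X_1\|-n\lambda_F\big)/\sqrt n$. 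It remains to verify non-degeneracy, $\sigma^2>0$. If $\sigma^2=0$ then $D_1=0$ a.s., i.e.\ $\sigma(g,x)=\lambda_F+\varphi(x)-\varphi(g\cdot x)$ for $\mu\otimes\nu$-almost every $(g,x)$, so that the norm cocycle would be cohomologous to the constant $\lambda_F$; a standard argument then forces the action of $G_X$ on $\mathbb{P}(\R^d)$ to preserve a structure incompatible with $G_X$ being non-compact and strongly irreducible (and positivity of $\lambda_F$ from Theorem~\ref{t.F} rules out the cocycle being an exact coboundary). This contradiction yields $\sigma>0$ and completes the proof.
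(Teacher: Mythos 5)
This statement is not proved in the paper at all: Theorem~\ref{t.BQ} is quoted from Benoist--Quint \cite{BQ1} purely as background for the iid case, so there is no internal proof to compare yours against. Judged on its own merits, your outline correctly reproduces the strategy actually used in \cite{BQ1} and its predecessors: reduce $\log\|X_n\cdots X_1\|$ to the norm cocycle $\sigma(X_n\cdots X_1,x_0)$, perform a martingale approximation via a solution of the Poisson equation for the Markov operator on projective space, apply the martingale CLT, and argue non-degeneracy by showing the cocycle cannot be a coboundary. As a proof, however, it has genuine gaps rather than just omitted routine details.

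First, the step you yourself label ``the technical heart'' --- the convergence, under only the second moment hypothesis \eqref{LogMom} and with no spectral gap available, of the series $\varphi=\sum_{n\ge 0}(P^n\sigma_\mu-\lambda_F)$ in a sense strong enough to give $\varphi\in L^2(\nu)$ and an $o(\sqrt n)$ error --- is asserted, not carried out; this is exactly the content of Benoist--Quint's contribution, and without it the decomposition $S_n-n\lambda_F=M_n+\varphi(Z_0)-\varphi(Z_n)$ is unjustified. Second, several intermediate claims silently assume proximality: uniqueness of the stationary measure $\nu$ on $\mathbb{P}(\R^d)$, and the almost sure convergence of the leading singular direction $u_1^{(n)}$ (hence the boundedness of $\log\|X_n\cdots X_1\|-\sigma(X_n\cdots X_1,x_0)$), require the top Lyapunov exponent to be simple. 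That follows from non-compactness plus strong irreducibility when $d=2$, but not for general $d$: for instance $\SL(2,\C)$ viewed inside $\SL(4,\R)$ is non-compact and strongly irreducible yet not proximal, and then neither uniqueness of $\nu$ nor convergence of $u_1^{(n)}$ is available as stated. Benoist--Quint circumvent this by working with the Iwasawa cocycle on the flag variety rather than the norm cocycle on $\mathbb{P}(\R^d)$. Finally, the non-degeneracy $\sigma^2>0$ is dispatched with ``a standard argument,'' but the incompatibility of the coboundary equation with non-compactness is itself a real step in \cite{BQ1}. So your proposal is a correct road map of the known proof, but not yet a proof.
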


Notice that both Theorems \ref{t.F} and \ref{t.BQ} require the sequence of random matrices to be identically distributed.  That requirement allows to consider a stationary measure for the random dynamics on the projective space, which is a key notion used in the proofs of both results.  Nevertheless, the classical LLN and CLT for sums of real valued random variables hold without that assumption, and it is natural to expect that non-stationary versions of the LLN and CLT for random matrix products  should hold as well. Indeed, the non-stationary version of the Furstenberg Theorem was recently provided in~\cite{GK}, and it already found interesting applications in spectral theory~\cite{GK2}. The non-stationary version of the CLT for random products of $\SL(2, \mathbb{R})$ matrices is the main result of this paper.

\subsection{Preliminaries and main results}\label{s:prelim}

Let us now provide the setting needed to state our main result. From now on, let us restrict ourselves to the case of products of $\SL(2, \mathbb{R})$ matrices.

Let $\mK$ be a compact subset in the set of probability measures on the group~$\SL(2,\R)$.
We will say that {\bf the measures condition} is satisfied if for every measure $\mgr\in \mK$ there are no Borel probability measures $\msp_1$, $\msp_2$ on $\mathbb{RP}^{1}$ such that $(f_A)_*\msp_1=\msp_2$ for $\mgr$-almost every $A\in \SL(2, \mathbb{R})$,
\emr{where $f_A$ is the projectivisation of the matrix~$A$ (see Eq.~\eqref{f-B-def} below).}

Let us fix some sequence $\{\mgr_i\}_{i\in \mathbb{N}}$, $\mgr_i\in \mK$, and let $A_i\in \SL(2, \mathbb{R})$ be independent matrix-valued random variables, with $A_i$ being distributed w.r.t. $\mgr_{i}$. Set
\[
T_n=A_nA_{n-1}\ldots A_1,
\]
and denote
\begin{equation}\label{eq:L-def}
L_n=\mathbb{E}\log\|T_n\|.
\end{equation}

If the measures condition is satisfied, then for any $\{\mgr_i\}_{i\in \mathbb{N}}$, $\mgr_i\in \mK$, the sequence $\{L_n\}$ must grow at least linearly, i.e. the norms of the random products must grow exponentially on average, see \cite[Theorem 1.5]{GK}. A related statement on exponential growth of the norms in the case of non-stationary linear cocycles over Markov chains was established by Goldsheid \cite{G}. Moreover, if additionally a uniform bound on some exponential moment exists  for distributions from $\mathcal{K}$,  the non-random sequence $\{L_n\}$ describes the behavior of almost every random product, and in this sense serves as a non-stationary analog of Lyapunov exponent. Namely, almost surely one has $\lim_{n\to \infty}\frac{1}{n}\left(\log\|T_n\|-L_n\right)=0,$ see  \cite[Theorem 1.1]{GK}.  This provides a direct analog of the LLN for non-stationary random matrix products.

That compels the question whether an analog of CLT for non-identically distributed random variables must hold in this setting.
Our main result provides a positive answer in dimension two: 

\begin{theorem}\label{t.CLT}
 Let $\mK$ be a compact subset in the set of probability measures on the group~$\SL(2,\R)$ that satisfies the measures condition, and there exists $\gamma \emr{\, > 2}$ and $M>0$ such that for any $\mgr \in \mK$ one has
\begin{equation} \label{tails}
    \E_{\mgr} (\log \|A\| )^{\gamma} < M.
\end{equation}
Then the random variables
    $$
        \frac{\log \|T_n\|-L_n}{\sqrt{\var(\log\|T_n\|)}}
    $$
    converge in distribution to $\mathcal{N}(0,1)$, with the convergence that is uniform with respect to the choice of the sequence $\mu_1,\mu_2,\dots\in\mK$.

     Also, there are constants $C_1, \Cxi>0$ and an index $n_0$ such that for all $n\ge n_0$ and all $\mu_1,\dots,\mu_n\in\mK$ one has
    \begin{equation} \label{VarGrowth}
        C_1^2{n}\le \var(\log\|T_n\|)\le \Cxi^2{n}.
    \end{equation}

\end{theorem}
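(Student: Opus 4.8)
The plan is to prove the CLT via the standard strategy of linearizing $\log\|T_n\|$ into an additive cocycle over the projective action, and then applying a CLT for non-stationary arrays of weakly dependent (martingale-like) random variables. First I would introduce, for a unit vector $\bar v\in\mathbb{RP}^1$ and a matrix $A$, the logarithmic expansion rate $\sigma(A,\bar v)=\log\frac{\|A v\|}{\|v\|}$, so that by the cocycle identity
\begin{equation*}
\log\|T_n v\| = \sum_{k=1}^n \sigma\bigl(A_k,\, \overline{T_{k-1}v}\bigr) + \log\|v\|.
\end{equation*}
The difference $\log\|T_n\| - \log\|T_n v\|$ is controlled (uniformly, with exponentially small tails) once $\overline{T_{k-1}v}$ has been pushed close to the ``most expanding direction,'' which is exactly where the measures condition and the finite moment assumption \eqref{tails} enter, through the contraction-on-projective-space estimates already available from \cite{GK}. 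Thus it suffices to prove the CLT for $S_n:=\sum_{k=1}^n \xi_k$ with $\xi_k=\sigma(A_k,\overline{T_{k-1}v})$, where $v$ is a fixed vector; note $\xi_k$ depends on $A_1,\dots,A_k$ but the \emph{conditional} law of $\overline{T_{k-1}v}$ given the past is what couples the terms.

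The key analytic input is a \textbf{loss of memory / decoupling} estimate: the conditional distribution of $\overline{T_{k}v}$ given $A_1,\dots,A_j$ converges, as $k-j\to\infty$, to something essentially independent of the conditioning, at an exponential rate in $k-j$, in a Wasserstein-type metric on $\mathbb{RP}^1$. This is the non-stationary analogue of the spectral gap for the transfer operator used by Le Page and Benoist--Quint; in the non-stationary setting it follows from the uniform contraction estimates for compositions of random projective maps drawn from $\mK$ (the measures condition guarantees a uniform-in-$\mK$ contraction after a bounded number of steps, and the moment bound \eqref{tails} controls the fluctuations). From this I would deduce that $\{\xi_k\}$ is a uniformly-mixing array with summable mixing coefficients and uniformly bounded $(2+\delta)$-moments (using \eqref{tails} with $\gamma\ge 9$, which gives ample room — even $\gamma>2$ would suffice for moments, but the larger exponent is needed to absorb the error terms from the linearization and to get quantitative rates). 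One then invokes a classical CLT for mixing triangular arrays (e.g. the Lindeberg/Bernstein block-decomposition argument, or a martingale-approximation: write $\xi_k = \Delta_k + (h_k - h_{k+1}) + r_k$ where $\Delta_k$ is a martingale difference w.r.t. the filtration $\mathcal{F}_k=\sigma(A_1,\dots,A_k)$, $h_k$ is a bounded corrector, and $r_k$ has summable variance) and applies the martingale CLT (Brown / McLeish) with the Lindeberg condition verified from the uniform moment bound.

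Establishing the variance bounds \eqref{VarGrowth} is where the \emph{lower} bound is the main obstacle, and it should be handled \emph{in tandem} with the CLT rather than after it. The upper bound $\var(\log\|T_n\|)\le C_2 n$ is routine: by the martingale/mixing decomposition the variance is a sum of $n$ terms each bounded by the uniform second moment plus a summable correlation series. For the lower bound one must rule out that the fluctuations cancel out. I would argue that for each $k$, conditionally on $\mathcal{F}_{k-1}$, the random variable $\xi_k=\sigma(A_k,\overline{T_{k-1}v})$ has conditional variance bounded below by a positive constant $c_0$ \emph{uniform over $\mK$ and over the conditioning}: indeed if this conditional variance were $<c_0$ for some measure $\mu\in\mK$ and some direction $\bar w$, then $\log\|Av\|$ would be nearly constant for $\mu$-a.e.\ $A$ on a neighborhood of $\bar w$, and pushing this through (together with the rotation part) would force the existence of the forbidden invariant family of measures on $\mathbb{RP}^1$, contradicting the measures condition — this is a compactness argument on $\mK$. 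Given a uniform lower bound on conditional variances and the martingale-difference structure, $\var(S_n)\ge \sum_k \E[\var(\Delta_k\mid\mathcal{F}_{k-1})] - (\text{summable errors}) \ge c_0 n - O(1)$, which also shows the normalization $\sqrt{\var(\log\|T_n\|)}$ is comparable to $\sqrt n$, so the CLT statement and \eqref{VarGrowth} reinforce each other. The uniformity of the convergence over sequences in $\mK$ comes for free because every estimate above — the contraction rate, the mixing coefficients, the moment bounds, the conditional-variance lower bound — is uniform over $\mK$ by compactness, so the Berry--Esseen-type bound in the array CLT depends only on $\mK$, $\gamma$, $M$, and $n$.
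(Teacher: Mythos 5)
Your proposal has a genuine gap, and it is in the step you present as the crux of \eqref{VarGrowth}. The claim that the conditional variance of $\xi_k=\sigma(A_k,\overline{T_{k-1}v})$ given $\mathcal{F}_{k-1}$ is bounded below by a constant $c_0>0$ uniformly over $\mK$ and over directions, as a consequence of the measures condition, is false. Take $\mu$ supported on finitely many matrices of the form $R\cdot D$ with one fixed $D=\mathrm{diag}(2,1/2)$ and $R$ ranging over a few generic rotations: then for \emph{every} fixed unit vector $w$ the norm $\|Aw\|=\|Dw\|$ is deterministic, so the one-step conditional variance vanishes identically, while such a $\mu$ perfectly well satisfies the measures condition (the generated group is non-compact and strongly irreducible). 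Knowing that $\log\|Aw\|$ is a.s.\ constant at a single direction does not produce a pair of measures with a deterministic image, so your ``push this through to contradict the measures condition'' step does not go through. In such examples all of the variance of $\log\|T_n\|$ comes from the fluctuation of the random directions $\overline{T_{k-1}v}$ (i.e.\ from the corrector in your own decomposition), not from conditional fluctuations of the increments. This is exactly why the lower bound is the hard part of the paper: it requires blocks of a fixed length $n_0$ post-composed with rank-one projections, the Atoms Dissolving Theorem, a lower-semicontinuity/compactness argument on $\mathbb{RP}^1\times Y\times\mK^{n_0}$, and conditioning on the long intermediate products having large norm. A single-step compactness argument cannot be repaired.

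The second, related gap is the decoupling input. You assert exponential loss of memory of the projective direction, summable mixing coefficients, and a bounded corrector $h_k$ in the martingale approximation. Under the present hypotheses there are no exponential moments and no stationary measure, so neither the Le Page spectral-gap machinery (which needs exponential moments even in the iid case) nor the Benoist--Quint Poisson-equation corrector (which is built from the stationary measure) is available, and you do not indicate how to manufacture these from the measures condition plus the $\gamma$-th moment bound \eqref{tails}. The quantitative regularity that is actually available in the non-stationary setting --- and that the paper uses, via Theorem~\ref{thm:logHol} --- is only a log-H\"older bound $\Prob\bigl(f_{T_{(n,m]}}(p_1)\in \ball_r(p_2)\bigr)\le C|\log r|^{-\gamma/2}$, polynomial in $|\log r|$, which is far weaker than an exponential Wasserstein contraction and would not by itself give summable mixing rates or a bounded corrector. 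The paper avoids mixing altogether: it bounds the moments of the subadditivity defect $R_{n,n'}=\xi_n+\xi_{(n,n+n']}-\xi_{n+n'}$ using that log-H\"older estimate, and then runs a dyadic bootstrapping on characteristic functions (the quantity $N_\rho$), where the independence of the two halves of the product does all the work. So as written, your route both overstates the available decoupling and rests on an incorrect key step for the variance lower bound.
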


\begin{remark}\label{rk:CLT}
    \begin{enumerate}[(a)]
      \item  The condition~(\ref{tails}) with the assumption $\gamma>2$ is optimal, in a sense that it cannot be strengthened to $\gamma=2$. This is in contrast with the iid case, compare with Theorem~\ref{t.BQ}. We discuss this below, see Example~\ref{ex:no-CLT-2D} in Section~\ref{s:examples}.
      \item One should expect that, under suitable conditions, Theorem \ref{t.CLT} should hold for random $\SL(d, \mathbb{R})$ matrix products for every $d\ge 2$. To prove such a statement, it would be helpful to have a non-stationary analog of simplicity of the Lyapunov spectrum, see \cite{GR}, \cite{GM} for the case of iid random matrix products. In the case of some specific regular distributions in $\SL(d, \mathbb{R})$ such an analog was recently established \cite{AFGQ}, but a statement for a general sequence of distributions is currently not available, even if certainly expected.
         \end{enumerate}
\end{remark}
The conclusion of Theorem~\ref{t.CLT} also applies to the distribution of log-lengths of individual vectors, $\log |T_n v|$, as well as to the matrix elements $(T_n)_{i,j}$.

\begin{theorem}\label{t:CLT-vectors}
Under the assumptions of Theorem~\ref{t.CLT}, for any nonzero $v\in \R^2$ and any $i,j=1,2$ the random variables
    $$
        \frac{\log |T_n v|-L_n}{\sqrt{\var(\log\|T_n\|)}}, \quad         \frac{\log |(T_n)_{ij}| - L_n}{\sqrt{\var(\log\|T_n\|)}}
    $$
    converge in distribution to $\mathcal{N}(0,1)$, where $(T_n)_{ij}$ is the $(i,j)$-th element of the matrix~$T_n$.
\end{theorem}

 We consider this paper to ba a ``proof of concept'', a demonstration that an enormous amount of results on random walks on groups formulated in terms of the law of large numbers, CLT, the law of the iterated logarithms etc. can be expected to hold in the non-stationary setting, even if the notion of the stationary measure on the projective space is not defined. The key observation here is that a random dynamical system acts on the measures on the phase space by convolutions, i.e. averaging of the push-forwards of the measure by the random dynamics, and such an action ``moves'' measures toward the space of measures with some specific modulus of continuity, e.g. H\"older or log-H\"older, depending on the setting, see \cite[Theorem 2.8]{GKM}, \cite[Theorems 2.4 and 2.9]{M}. For some other recent results related to non-stationary random dynamics see \cite{GK3}, \cite{M2}, \cite{M3}.


\subsection{Notations and plan of the proof of the main result}\label{s.proofCLT}

Let us introduce some notations. Let
\[
T_{(n_1,n_2]}:=A_{n_2}A_{n_2 -1}\dots A_{n_1+1}
\]
be the part of the product of our random matrices $A_i$, where  the index varies from $n_1+1$ to $n_2$. Also, denote
$$
\xi_n= \log\|T_n\|, \quad \xi_{(n_1, n_2]}=\log \left\| T_{(n_1, n_2]} \right\|.
$$

Note that if two intervals of indices $(n_1,n_2]$ and $(n_1',n_2']$ are disjoint, then the corresponding products $T_{(n_1,n_2]}$ and $T_{(n_1',n_2']}$ are independent, and thus so are their log-norms~$\xi_{(n_1,n_2]}$ and $\xi_{(n_1',n_2']}$.

Now, a long product of matrices can be split into two parts (that we will later choose to be of comparable lengths): for any $n,n'$ one has
\[
T_{n+n'}=T_{(n,n+n']}T_{n} ;
\]
in particular, this implies
\begin{equation}\label{eq:sub-add}
\xi_{n+n'}= \log \| T_{(n,n+n']} T_{n} \| \le \log \| T_{n}\| + \log \| T_{(n,n+n']}\| = \xi_n + \xi_{(n,n+n']}.
\end{equation}

The right hand side of the inequality in~\eqref{eq:sub-add} is a sum of two independent random variables; let us introduce the random variable $R_{n,n'}$ that measures the difference between the right and left hand sides of~\eqref{eq:sub-add}:
\begin{equation}\label{e.Rnn}
    R_{n, n'} = \log \|T_n\| + \log \left\| T_{(n, n + n']} \right\| - \log \| T_{n + n'} \|= (\xi_n + \xi_{(n,n+n']}) - \xi_{n+n'}.
\end{equation}


We start the proof of Theorem~\ref{t.CLT} with establishing uniform moment bounds for the discrepancy~$R_{n,n'}$; this is done in Sec.~\ref{s:R-estimates}, see Proposition~\ref{prop:Rmoments}. To do so, we have to show that it is (sufficiently) improbable that the most expanded vector for the product $T_n$ is sent to the direction close to the one that is contracted by $T_{(n,n+n']}$. This can be reformulated in terms of the action on the projective line~$\mathbb{RP}^1$: in these terms, it is the probability \emr{of two sequences of iterations sending two given points close to each other}. We use results from~\cite{M}, where such estimates (log-H\"older bounds after a finite number of non-stationary iterations) were established.

Next, we use these estimates to establish a control on the central moments of~$\xi_n$, using the relation
\begin{equation}\label{eq:xi-sum}
\xi_{n+n'}=
\left( \xi_n + \xi_{(n,n+n']} \right) - R_{n,n'}.
\end{equation}
To do so, we use the fact that the sum in the parenthesis is a sum of independent random variables, and the moments for $R_{n,n'}$ are uniformly bounded, thus its addition cannot increase the moments too much. This is done in Section~\ref{s:xi-moments}, see Proposition~\ref{prop:ximoments}. Also (see Lemma~\ref{l:var-lower}), we get a lower bound for the linear growth of the variances $\var \xi_n$, thus altogether establishing the conclusion~\eqref{VarGrowth}. The argument is again based on using~\eqref{eq:xi-sum}; a key difficulty here is to establish the arbitrarily large lower bound for the variances. The latter is Proposition~\ref{p:large}, whose proof (that turned out to be surprisingly technical) is provided in Section~\ref{section:variance}.

The final step in the proof of Theorem~\ref{t.CLT} is a bootstrapping argument, provided in Sections~\ref{s.Distance} and~\ref{s:main}. Namely, the sum of two independent random variables (for instance, $\xi_n$ and $\xi_{(n,n+n'])}$) is closer to the Gaussian behavior than the summands separately. We introduce the quantitative way (\ref{e.Nr}) of measuring how close the distribution is to the Gaussian, and establish the corresponding inequality in Section~\ref{s:sum-independent}. Then, we control how an additional perturbation, coming from the $R_{n,n'}$ term, can worsen the bounds. This is done in Section~\ref{s:correction}.

We conclude by joining the bootstrapping estimates with the bounds established for $\xi_n$ and $R_{n,n'}$, and complete the proof of Theorem~\ref{t.CLT} in Section~\ref{s:main}.

We will deduce Theorem~\ref{t:CLT-vectors} from Theorem~\ref{t.CLT} in Section~\ref{s:CLT-vectors} below.

\section{Moment estimates for $R_{n,n'}$}\label{s:R-estimates}

In this section we provide the estimates on the moments of discrepancies~$R_{n,n'}$ defined by (\ref{e.Rnn}). We start by discussing some properties of $\SL(2, \R)$ matrices in Section~\ref{s:linear}, and then state and prove the main estimate, Proposition~\ref{prop:Rmoments}, in Section~\ref{s:moments}.

\subsection{Preliminaries: action of $\SL(2,\R)$ matrices}\label{s:linear}

Let $\pp$ be the canonical projection
 $\pp: \R^2\backslash \{0\} \to \RP^1$, and denote by $f_B$ the projectivization of the matrix $B$, namely,
\begin{equation}\label{f-B-def}
    f_B: \RP^1 \to \RP ^1, \quad \text{such that }  f_B \circ \pp = \pp \circ B.
\end{equation}
We consider $\RP^1$ to be equipped with the metric $\dist(\cdot,\cdot)$ that is the angle between the corresponding lines.

     Recall that every matrix $B\in \SL(2,\R)$ can be written as a product
    \begin{equation}\label{eq:B-diag}
        B= \Rot_{\beta_1} \left( \begin{smallmatrix}
            \|B\| & 0 \\
            0 & \|B\|^{-1}
        \end{smallmatrix} \right) \Rot_{\beta_2},
    \end{equation}
    where $\Rot_{\beta}$ is a rotation by the angle $\beta$. Let $e_1,e_2$ be the standard basis vectors of~$\R^2$; for a matrix~$B$ in the form~\eqref{eq:B-diag}, denote by $\rep(B)=[\Rot_{\beta_2}^{-1} e_2]\in\RP^1$ the direction that is \emph{contracted} the most by $B$. Finally, for
    a vector $v \in \R^2$ and a matrix $B \in \SL(2, \R)$, let
\begin{equation*}
    \Theta(B, v) = \log(\|B\| \cdot |v|) - \log(|Bv|)= \log \frac{\|B\| \cdot |v|}{|Bv|};
\end{equation*}
in other words, $\Theta(B, v)$ is a function that compares the expansion by $B$ of the vector $v$ with the maximal possible expansion by $B$ over all the nonzero vectors.

We have the following estimate:
\begin{lemma}\label{l:Theta-bound}
For any $B\in \SL(2,\R)$ and any nonzero vector $v\in \R^2$, one has
\begin{equation}\label{eq:Theta-upper}
\Theta(B, v)\le -\log \sin\dist([v],\rep(B)).
\end{equation}
\end{lemma}
\begin{proof}
We can assume the vector $v$ to be of unit length. Distance $\dist([v],\rep(B))$ is equal to the angle between $\Rot_{\beta_2}(v)$ and~$e_2$, thus the component of $\Rot_{\beta_2}(v)$ that is parallel to $e_1$ is equal to $\sin\dist([v],\rep(B))$; see Fig.~\ref{fig:calculating-B}. After the application of the diagonal matrix in the representation~$\eqref{eq:B-diag}$, this component gets multiplied by $\|B\|$, and provides a lower bound for the length of the image~$|Bv|$. This immediately implies~\eqref{eq:Theta-upper}.
\end{proof}

\begin{figure}
\includegraphics{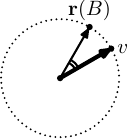} \qquad
\includegraphics{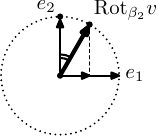} \qquad
\includegraphics{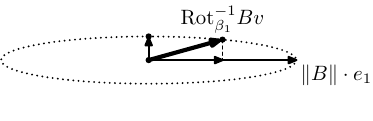}
\caption{Left: mostly contracted direction $\rep(B)$ and a vector~$v$. Center: their images after rotation by $\Rot_{\beta_2}$. Right:  the images after the application of the diagonal matrix in the decomposition~\eqref{eq:B-diag}.}\label{fig:calculating-B}
\end{figure}

Lemma~\ref{l:Theta-bound} immediately implies the following estimate:
\begin{coro}\label{cor:log-2}
For any $B\in \SL(2,\R)$ for at least one of two coordinate vectors~$e_i$ one has $\Theta(B,e_i)\le \log \sqrt{2}$.
\end{coro}
\begin{proof}
Indeed, at least one of the points $[e_1],[e_2]$ is at the distance at least $\frac{\pi}{4}$ from the direction $\rep(B)$, and the estimate follows from~\eqref{eq:Theta-upper}.
\end{proof}

Lemma~\ref{l:Theta-bound} involves the location of the direction~$\rep(B)$; the following statement gives a way to \emr{approximate its location:}
\begin{lemma}\label{l:B-preimage}
For any $B\in \SL(2,\R)$ for at least one of two coordinate vectors $e_j$ one has
\begin{equation}\label{eq:B-r-preimage}
\dist(f_{B}^{-1}[e_j],\rep(B)) \le \frac{1}{\|B\|^2}.
\end{equation}
\end{lemma}
\begin{proof}
One of the vectors $\Rot_{\beta_1}^{-1}e_j$ is at the angle $\theta\ge \frac{\pi}{4}$ with the direction $e_1$. Now,
\begin{equation}\label{eq:B-preimage}
\left( \begin{smallmatrix}
            \|B\| & 0 \\
            0 & \|B\|^{-1}
        \end{smallmatrix} \right)^{-1}
        \Rot_{\beta_1}^{-1} e_j =
\left( \begin{smallmatrix}
            \|B\| & 0 \\
            0 & \|B\|^{-1}
        \end{smallmatrix} \right)^{-1} \left( \begin{smallmatrix} \cos \theta \\ \pm \sin \theta \end{smallmatrix}\right) =
        \left( \begin{smallmatrix} \frac{1}{\|B\|} \cos \theta \\ \pm \|B\| \cdot \sin \theta \end{smallmatrix}\right).
\end{equation}
	The tangent of the angle $\theta'$ between the vector~\eqref{eq:B-preimage} and the direction of~$e_2$ is thus equal to $\tan \theta'= \frac{\mathop{\mathrm{cotan}} \theta}{\|B\|^2} \le \frac{1}{\|B\|^2}$. An application of $\Rot_{\beta_2}^{-1}$ then concludes the proof:
	\[
		\dist(f_{B}^{-1}[e_j],\rep(B)) = \theta' \le \tan \theta' \le \frac{1}{\|B\|^2}.
	\]
\end{proof}

Next, note that one can estimate the decrease in the log-norm in the product of two matrices~$B_1$ and~$B_2$ using their action on any (in particular, well-chosen) vector $v$:
\begin{lemma}\label{l:B1-B2}
For any $B_1,B_2\in \SL(2,\R)$ and any nonzero vector $v$,
\begin{equation}\label{eq:B1-B2}
\log \|B_1\| + \log \|B_2\| - \log \|B_2B_1\| \le \Theta(B_1,v) + \Theta(B_2, B_1 v).
\end{equation}
\end{lemma}
\begin{proof}
We can assume the vector $v$ to be a unit one. Then,
\begin{multline*}
\log \|B_2 B_1\| \ge \log |B_2 B_1 v| = \log \|B_2\| + \log |B_1 v| - \Theta(B_2, B_1 v)
\\
= \log \|B_2\| + \log \|B_1\| - \Theta(B_1 , v) -  \Theta(B_2, B_1 v).
\end{multline*}
\end{proof}

Finally, the previous lemmas can be joined together in order to obtain a good estimate for the right hand side of~\eqref{eq:B1-B2}. Namely, for any $B_1,B_2\in \SL(2,\R)$ consider the images of the coordinate directions $f_{B_1}([e_i])$, $i=1,2$, and preimages $f^{-1}_{B_2}([e_j])$, $j=1,2$. Let $\Delta_{B_1,B_2}$ be the minimal distance between these pairs:
\begin{equation}\label{eq:Delta}
\Delta_{B_1,B_2}:= \min_{i=1,2} \min_{j=1,2} \dist (f_{B_1}([e_i]),f^{-1}_{B_2}([e_j])).
\end{equation}

We then have the following estimate:
\begin{prop}\label{p:B1-B2}
For any $B_1,B_2\in \SL(2,\R)$, one has
\begin{equation}\label{eq:B1-B2-Delta}
\log \|B_1\| + \log \|B_2\| - \log \|B_1B_2\| \le \log 4\sqrt{2} + \log  \min(\Delta_{B_1,B_2}^{-1} , \|B_2\|^2).
\end{equation}
\end{prop}
\begin{proof}
Due to Corollary~\ref{cor:log-2}, we can choose a coordinate vector $e_i$ so that $\Theta(B_1,e_i)\le \log\sqrt{2}$. Due to Lemma~\ref{l:B1-B2}, the left hand side of~\eqref{eq:B1-B2-Delta} does not exceed
\begin{equation}\label{eq:B1-B2-Theta}
\Theta(B_1,e_i)+\Theta(B_2,B_1 e_i) \le \log \sqrt{2} + \Theta(B_2,B_1 e_i).
\end{equation}
Note that for any nonzero vector~$v$ (in particular, for $v=B_1e_i$), one has $|B_2 v|\ge \frac{1}{\|B_2\|} |v|$ and hence
\[
\Theta(B_2,v) \le \log\|B_2\|^2.
\]
Thus, if $\Delta_{B_1,B_2}^{-1}\ge \frac{1}{2}\|B_2\|^2$, the estimate~\eqref{eq:B1-B2-Theta} implies the desired~\eqref{eq:B1-B2-Delta} immediately: its right hand side then does not exceed
\[
\log \sqrt{2} + \Theta(B_2,B_1 e_i) \le \log \sqrt{2} + \log (2 \cdot \frac{1}{2}\|B_2\|^2) \le \log 2\sqrt{2} + \log  \min(\Delta_{B_1,B_2}^{-1} , \|B_2\|^2).
\]

Given that, from now on we can assume that $\Delta^{-1}_{B_1,B_2}< \frac{1}{2}\|B_2\|^2$, and the proof will be complete once we establish that
\begin{equation}\label{eq:Theta-Delta}
\Theta(B_2,B_1 e_i)\le \log 4 + \log \Delta_{B_1,B_2}^{-1}.
\end{equation}

\begin{figure}[h]
\includegraphics{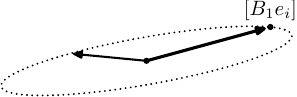} \qquad
\includegraphics{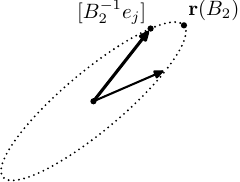} \\[5mm]
\includegraphics{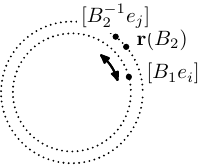}
\caption{Top left: image $[B_1 e_i]$ provided by Corollary~\ref{cor:log-2}. Top right: the direction $\mathbf{r}(B_2)$ and the preimage $[B_2^{-1} e_j]$, sufficiently close to it, that is provided by Lemma~\ref{l:B-preimage}. Bottom: these three directions and an arc of length at least $\Delta_{B_1,B_2}$.}\label{fig:vectors}
\end{figure}

Now, let $e_j$ be the coordinate vector for which the estimate~\eqref{eq:B-r-preimage} from Lemma~\ref{l:B-preimage} holds (see Fig.~\ref{fig:vectors}). Then for $v=f_{B_1}(e_i)$, by triangle inequality and definition of $\Delta_{B_1,B_2}$ one has
\begin{multline*}
\dist ([v], \rep(B_2))\ge \dist (f_{B_1}([e_i]), f_{B_2}^{-1}([e_j])) -  \dist (f_{B_2}^{-1}([e_j]),\rep(B_2))
\\
\ge \Delta_{B_1,B_2} - \frac{1}{\|B_2\|^2} \ge \frac{1}{2} \Delta_{B_1,B_2},
\end{multline*}
where we have used $\Delta_{B_1,B_2}> \frac{2}{\|B_2\|^2}$ for the last inequality.
Applying Lemma~\ref{l:Theta-bound}, we finally get
\[
\Theta(B_2, B_1e_i) \le -\log \sin \dist ([v], \rep(B_2)) \le -\log \sin \frac{1}{2}\Delta_{B_1,B_2} \le \log 4+ \log \Delta_{B_1,B_2}^{-1},
\]
where we have used the inequality $\sin x \ge \frac{2}{\pi} x \ge \frac{x}{2}$ for $x\in [0,\frac{\pi}{2}]$. This completes the proof of~\eqref{eq:Theta-Delta} and thus of the proposition.
\end{proof}


\subsection{Estimates}\label{s:moments}
The main statement of this section is the following estimate for the moments of the random variables $R_{n,n'}$. \emr{As $\gamma>2$, we can choose $\eps\in (0,1]$ such that
\[
2+\eps<\gamma.
\]
We fix a choice of such $\eps$ from now on until the end of the paper.}
\begin{prop} \label{prop:Rmoments}
    Under the assumptions of Theorem \ref{t.CLT}, 
    there exists $C_R$, such that for every $n, n' \in \N$ with $\emr{\frac{n'}{2}\le }n \le 2n'$, \emr{and any measures $\mgr_1,\dots,\mgr_{n+n'}\in \mK$,} one has
    \begin{equation*}
        \E R_{n, n'} < C_R, \quad \E R_{n, n'}^2 < C_R^{\emr{2}}, \quad \text{and} \quad \E R_{n, n'}^{\emr{2+\eps}} < C_R^{\emr{2+\eps}}.
    \end{equation*}
\end{prop}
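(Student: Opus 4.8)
The strategy is to control $R_{n,n'}$ through the geometry of the action on $\mathbb{RP}^1$. Recall that for $M \in \SL(2,\R)$, $\log\|M\| = -\log\|M^{-1}\|$ captures the most-expanded direction; writing $u_n \in \mathbb{RP}^1$ for the most-expanded (Oseledets) direction of $T_n$ and $s_{(n,n+n']}$ for the most-contracted direction of $T_{(n,n+n']}$, one has the elementary singular-value estimate
\begin{equation*}
R_{n,n'} \le 2\bigl(\log\|T_n\| + \log\|T_{(n,n+n']}\|\bigr) + 2\log \dist(u_n, s_{(n,n+n']}) \quad\text{(from below)}
\end{equation*}
and always $R_{n,n'} \ge 0$. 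More precisely, the standard computation (see e.g.\ the polar decomposition estimate used in Le Page's argument) gives $R_{n,n'} \le -2\log\sin\angle(T_n u_n, s_{(n,n+n']}) = -2\log\dist(\overline u, s)$ up to an additive constant, where $\overline u = T_n u_n / \|T_n u_n\|$ is the image direction. So the task splits into (i) a deterministic bound $R_{n,n'} \le C(\xi_n + \xi_{(n,n+n']}) - 2\log\dist(\overline u_n, s_{(n,n+n']})$, and (ii) moment control on each of the three random quantities on the right.

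**Key steps.** First I would record the deterministic inequality above by explicit $2\times 2$ linear algebra: decompose $T_{(n,n+n']}$ in its singular basis, feed in the unit vector $T_n u_n / \|T_n u_n\|$, and estimate $\|T_{(n,n+n']} T_n u_n\|$ from below by $\|T_{(n,n+n']}\|^{-1}\cdot|\text{component along expanding direction}|$, which is $\gtrsim \|T_{(n,n+n']}\|^{-1} \dist(\overline u_n, s)$. Combined with $\|T_{n+n'}\| \ge \|T_{n+n'} u_n\| = \|T_{(n,n+n']} T_n u_n\|$ and $\|T_n u_n\| = \|T_n\|$, this yields $R_{n,n'} \le -2\log\dist(\overline u_n, s_{(n,n+n']}) + O(1)$. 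Second, for the moment bounds: the terms $\log\|T_n\|$ and $\log\|T_{(n,n+n']}\|$ have $\gamma$-th moments bounded by $n^{\gamma}M$-type quantities via the hypothesis \eqref{tails} and subadditivity — but crucially they enter $R_{n,n'}$ only through the event where $\overline u_n$ is close to $s$, so one needs the $\log$-Hölder estimate. Here is where I invoke the results of \cite{M}: after a bounded number of non-stationary convolution steps the distribution of $\overline u_n$ on $\mathbb{RP}^1$ has a log-Hölder modulus of continuity uniform over sequences in $\mK$, hence $\P(\dist(\overline u_n, s) < r) \le C/|\log r|^{\beta}$ for some $\beta > 1$ (this is Lemma~\ref{lm:Rn}, stated as forthcoming). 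Since $s_{(n,n+n']}$ is independent of $\overline u_n$, conditioning on $s$ and integrating gives $\P(R_{n,n'} > t) \le C e^{-t/2 \cdot 0} \cdots$ — more precisely $\P(-2\log\dist > t) = \P(\dist < e^{-t/2}) \le C/(t/2)^{\beta}$, i.e.\ a power-law tail with exponent $\beta$. Third, I would combine this tail with the heavy-tailed factors $\xi_n, \xi_{(n,n+n']}$ via Hölder's inequality: on the rare event $\{\dist < r\}$, the contribution of $\log\|T_n\|$ to $\E R^k$ is at most $\bigl(\E (\log\|T_n\|)^{\gamma k'}\bigr)^{1/k'} \P(\dist<r)^{1/k''}$; optimizing over the split and using $\gamma \ge 9$ (which gives enough room for $k \le 3$ and for the polynomial loss from $\xi_n$ growing like $n$, while $n \le 2n'$ keeps the two halves balanced) yields a bound independent of $n$.

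**Main obstacle.** The delicate point is matching the polynomial growth of the $\xi$-factors (moments growing polynomially in $n$) against the decay of $\P(\dist(\overline u_n, s) < r)$, which from the log-Hölder estimate is only \emph{logarithmic} in $r$, not polynomial. A logarithmic modulus alone would be far too weak to kill an $n^c$ factor. The resolution must be that the relevant bad event is not "$\dist < $ fixed small $r$" but rather "$\dist < e^{-c\,\xi_n}$"-type, i.e.\ the direction $\overline u_n$ must be \emph{exponentially} close to $s$ for $R_{n,n'}$ to be large — and then even a logarithmic-in-$r$ bound becomes a bound polynomial in $\xi_n^{-1}$... no: rather, one uses that $R_{n,n'}$ large forces $\dist$ exponentially small, so $\P(R_{n,n'} > t) \le \P(\dist < e^{-t/2}) \le C/t^{\beta}$, and the $\xi$-factors enter multiplicatively only through a further decomposition where one first truncates $\xi_n \le t^{1/2}$ say. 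Making this truncation-and-Hölder bookkeeping give a clean $n$-independent bound for $\E R^3$, using precisely the exponent $\gamma \ge 9 = 3\cdot 3$, is the technical heart; I expect the proof to proceed by fixing a threshold depending on $t$, bounding the contribution above the threshold by Markov with the $\gamma$-th moment, and below the threshold by the log-Hölder probability times the truncated factor, then integrating $\int_0^\infty t^2 \P(R>t)\,dt < \infty$ uniformly.
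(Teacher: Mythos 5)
Your skeleton matches the paper's: bound $R_{n,n'}$ by the ``loss of expansion'' of the second block on the image of the most-expanded direction of the first block, convert $\{R_{n,n'}>x\}$ into the event that two independent objects on $\R\Prob^1$ come within distance $\sim e^{-x/2}$ of each other, apply the non-stationary log-H\"older estimate of \cite{M} to get a power-law tail in $x$, and finish by integrating plus a H\"older/Markov argument with the $\gamma$-th moments. This is exactly the route of Lemma~\ref{lm:Rn} and the deduction of Proposition~\ref{prop:Rmoments}. But two steps, as you state them, have genuine gaps.

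First, you invoke log-H\"older regularity of the law of $\overline u_n$, the image direction of the most-expanded vector of $T_n$. Theorem~\ref{thm:logHol} (i.e.\ \cite[Thm.~2.9]{M}) gives regularity for images of a \emph{fixed} point $p_1$ under the random product; $\overline u_n$ is not such an image (it is the top singular direction of $T_n$, correlated with the whole product), so the bound $\P(\dist(\overline u_n,s)<r)\le C|\log r|^{-\gamma/2}$ does not follow directly from what you cite. The paper flips the conditioning: it fixes $[v]=\overline u_n$ (independent of the second block) and shows (Lemma~\ref{lm:geom}) that $\Theta(B,v)>x$, with $B=T_{(n,n+n']}$, forces one of $f_{B^{-1}}([e_1]),f_{B^{-1}}([e_2])$ to lie in $\ball_r([v])$ with $r=2e^{-x/2}$; these are images of fixed points under the inverse product $A_{n+1}^{-1}\cdots A_{n+n'}^{-1}$, whose factors again satisfy the measures condition, so Theorem~\ref{thm:logHol} applies. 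Your argument needs either this reformulation or a separate (nontrivial) regularity statement for the law of $\overline u_n$. Second, the regularity estimate is only valid at scales $r>\kappa^{n'}$, so the tail bound $\P(R_{n,n'}>x)\le c\,x^{-\gamma/2}$ is available only for $x\le \ck n'$; this scale restriction, not the heavy tails of $\xi_n$ entering $R_{n,n'}$ ``multiplicatively'', is what forces the two-regime split. Below the threshold one integrates $\int_0^{\ck n'}3x^2\,\P(R_{n,n'}>x)\,dx$, which converges uniformly because the tail exponent is $\gamma/2\ge 9/2>3$ (your stated ``$\beta>1$'' would not even give the second moment). Above the threshold one uses the deterministic bound $R_{n,n'}\le\sum_{i\le n+n'}\log\|A_i\|$, whose ninth moment is $O((n+n')^9)$ by \eqref{tails}, together with H\"older and the tail bound at $x=\ck n'$: the resulting factor $(n+n')^3(\ck n')^{-\gamma/3}$ is bounded precisely because $\gamma\ge 9$ and $n\le 2n'$. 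Your ``truncate $\xi_n\le t^{1/2}$'' bookkeeping does not isolate this mechanism; once the two points above are put in place, the proof closes exactly as in the paper.
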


%

\begin{proof}[Proof of Proposition \ref{prop:Rmoments}]
    First of all, notice that it suffices to prove the estimate for~$\E R_{n, n'}^{\emr{2+\eps}}$, as it implies the other two by using Jensen inequality: for every $p\in [1,2+\eps]$ one has
    \[
    	\E R_{n,n'}^p = \E (R_{n,n'}^{2+\eps})^{\frac{p}{2+\eps}} \le (\E R_{n,n'}^{2+\eps})^{\frac{p}{2+\eps}} <
	(C_R^{2+\eps})^{\frac{p}{2+\eps}}  = C_R^p.
    \]

 Now, the random variable $R_{n,n'}$ has the form
 \[
 R_{n,n'} = \log \|B_1\|+\log \|B_2\|-\log \|B_2B_1\|,
 \]
 where $B_1= T_n$ and $B_2=T_{(n,n+n']}$.  Proposition~\ref{p:B1-B2} then implies
 \begin{equation}\label{eq:R-upper}
 R_{n,n'} \le \log 4\sqrt{2} + \log \min(\Delta_{T_n,T_{(n,n+n']}}^{-1} , \|T_{(n,n+n']}\|^2)
 \end{equation}

In what follows we will use the following regularity result from~\cite{M}. The setting of~\cite{M} is a general setting of non-stationary random dynamics: one assumes that a compact set~$\mKM$ of probability measures on $\Homeo(\Man)$ for some compact manifold~$\Man$ is given, and that these measures are concentrated on the bi-Lipschitz maps. It is also assumed that all these measures satisfy no deterministic image condition, and that the Lipschitz constant admits a uniformly bounded $\gamma$-th log-moment:
\[
\forall \mgr \in \mKM \quad \int_{\Lip(\Man)} (\log \max(\Lip(f),\Lip(f^{-1})))^{\gamma} \, \dd\mgr(f) < C_{\mK}.
\]
Then, the following estimates hold:
\begin{theorem}[{\cite[Theorem~B.1]{M}}]\label{thm:logHol}
Under the assumptions above, there exist constants $\kappa>1$ and $\CM$, such that
for any two initial probability measures $\msp^{(1)}_0$ and $\msp^{(2)}_0$ on~$\Man$ and any two sequences of iterations,
\[
\mgr_1^{(1)},\dots,\mgr_{n_1}^{(1)},\mgr_1^{(2)},\dots, \mgr_{n_2}^{(2)}\in\mK,
\]
for the random images
\begin{equation}\label{eq:random-images-msp}
\msp_1 =\mgr_{n_1}^{(1)}*\dots*\mgr_1^{(1)}*\msp^{(1)}_0, \quad \msp_2=\mgr_{n_2}^{(2)}*\dots*\mgr_1^{(2)}*\msp^{(2)}_0
\end{equation}
one has a uniform bound for $\gamma$-th log moment for the distance between the random points with a cut-off at radius $\theta_{n_1,n_2}:=\exp(-\kappa^{\min(n_1,n_2)})$:
\begin{equation}\label{eq:iint}
\iint_{M\times M} |\log \max (d(x,y), \theta_{n_1,n_2})|^{\gamma} \, \dd\nu_1 \dd\nu_2 < \CM.
\end{equation}
\end{theorem}
In particular, the Markov inequality for~\eqref{eq:iint} immediately implies a log-H\"older-type bound for the distances between these images
\begin{equation}\label{eq:log-Holder}
(\msp_1\times \msp_2) \{(x,y) \mid d(x,y)\le r \} <  \CM \cdot |\log r|^{-\gamma}
\end{equation}
for every $r\ge \theta_{n,n'}$.

In our case, $\Man$ will be the projective line $\RP^1$ and the maps will be the projectivisations of linear maps of~$\R^2$. Namely, to a probability measure $\mgr\in \mK$ on $\SL(2,R)$ we associate \emph{two} probability measures that are its pushforwards by the maps from $\SL(2,\R)$ to $\Homeo(\RP^1)$,
\[
F_1:A\mapsto f_A \quad \text{and} \quad F_2:A\mapsto f^{-1}_{A},
\]
and we let the set $\mKM$ be formed by these images,
\[
\mKM:=\{(F_1)_*\mgr \mid \mgr \in \mK\} \cup \{(F_2)_*\mgr \mid \mgr \in \mK\}.
\]
Then it satisfies the assumptions of Theorem~\ref{thm:logHol}: the set $\mKM$ is compact as a union of two continuous images of a compact~$\mK$, the moments assumption follows from the moments assumption for~$\mK$, and absence of a deterministic image follows from the similar condition for the measures from~$\mK$.

We will take initial measures
\[
\msp_0^{(1)}=\msp_0^{(2)}= \frac{1}{2} \delta_{[e_1]} +\frac{1}{2} \delta_{[e_2]}.
\]
Then, taking $n_1:=n$ and the sequence $\mgr_j^{(1)}:=(F_1)_* \mgr_j$, we see that the image measure $\msp_1$ is the half-sum of the laws of $[T_n e_1]$ and of~$[T_n e_2]$. In the same way, taking $n_2:=n'$ and the sequence
\[
\mgr_j^{(2)}:=(F_2)_* \mgr_{n+n'-(j-1)}, \quad j=1,\dots,n',
\]
we see that the image measure $\msp_2$ is the half-sum of the laws of $[T_{(n,n+n']}^{-1} e_1]$ and of~$[T_{(n,n+n']}^{-1} e_2]$. An application of Theorem~\ref{thm:logHol} thus yields a uniform upper bound
\[
\frac{1}{4} \sum_{i=1,2} \sum_{j=1,2} \E \left|\log \max \left\{ \dist\left([T_{n} e_i], [T_{(n,n+n']}^{-1} e_j]\right), \theta_{n,n'}\right\}\right|^{\gamma} \le \CM,
\]
and hence (\emr{replacing the sum over $i$ and $j$ with the maximal summand})
\[
\E \left|\log \max \left\{\Delta_{T_n,T_{(n,n+n']}}, \theta_{n,n'}\right\}\right|^{\gamma} \le 4\CM,
\]

Now, in the right hand side of~\eqref{eq:R-upper} we have a different maximum: instead of a cut-off at a fixed threshold~$\theta_{n,n'}$, the norm of the product $T_{(n,n+n']}$ appears.
\emr{To handle it, note that if $\Delta_{T_n,T_{(n,n+n']}}^{-1}\le \theta_{n,n'}^{-1}$, then
\[
 \min(\Delta_{T_n,T_{(n,n+n']}}^{-1} , \|T_{(n,n+n']}\|^2) \le \Delta_{T_n,T_{(n,n+n']}}^{-1} = \min (\Delta_{T_n,T_{(n,n+n']}}^{-1}, \theta_{n,n'}^{-1}),
\]
while if $\Delta_{T_n,T_{(n,n+n']}}^{-1}> \theta_{n,n'}^{-1}$, then
\[
 \min(\Delta_{T_n,T_{(n,n+n']}}^{-1} , \|T_{(n,n+n']}\|^2) \le \|T_{(n,n+n']}\|^2 ;
\]
applying the logarithm and joining these two inequalities, we get
}
\begin{multline}\label{eq:splitting}
 \log \min(\Delta_{T_n,T_{(n,n+n']}}^{-1} , \|T_{(n,n+n']}\|^2) \le  \left|\log \max (\Delta_{T_n,T_{(n,n+n']}} , \theta_{n,n'})\right|  +
 \\
 + (\log \|T_{(n,n+n']}\|^2) \cdot \Ind_{\{\Delta_{T_n,T_{(n,n+n']}}< \theta_{n,n'}\}}.
\end{multline}
The $\left(2+\eps\right)$-th moment of second summand can be then estimated by the H\"older inequality with the exponents $p=\frac{\gamma}{2+\eps}>1$ and $q=\frac{1}{1-\frac{1}{p}}$:
\begin{multline}\label{eq:finalizing}
\E \left[ (2\log \|T_{(n,n+n']}\|)^{2+\eps} \cdot \Ind_{\{\Delta_{T_n,T_{(n,n+n']}}< \theta_{n,n'}\}} \right]
\\
\le  \left(\E (2\log \|T_{(n,n+n']}\|)^{\gamma}\right)^{1/p} \cdot \left( \Prob \{\Delta_{T_n,T_{(n,n+n']}}< \theta_{n,n'}\}\right)^{1/q}
\\
\le (2n')^{2+\eps} M^{1/p} \cdot (4\CM \cdot |\log \theta_{n,n'}|^{-\gamma})^{1/q}
\end{multline}
Here we have noticed that $\log \|T_{(n,n+n']}\|$ doesn't exceed a sum of $n'$ summands of the form $\log \|A_j\|$, the $\gamma$-th moment of each of which does not exceed~$M$, and used the Markov inequality~\eqref{eq:log-Holder} to estimate the probability in the second factor.  Now, as $|\log \theta_{n,n'}|=\kappa^{\min(n,n')}$ grows \emph{exponentially} in $n'$ (recall that $n\le 2n'$ due to the assumption of the proposition), the right hand side of~\eqref{eq:finalizing} is uniformly bounded (and actually tends to~$0$ as $n'$ grows).

We have shown that both random variables in the right hand side of~\eqref{eq:splitting} have uniformly bounded $\left(2+\eps\right)$-th moments, and hence the same applies to~$R_{n,n'}$: the upper bound~\eqref{eq:R-upper} for it differs by an addition of a constant~$\log 4\sqrt{2}$.
\end{proof}



\section{Moments growth for $\xi_n$}\label{s:xi-moments}
\subsection{Probabilistic setup}

In this section, we will obtain the bounds for the moments of the centred random variables $\txi_n:=\xi_{n}-\E \xi_n$. The upper estimates will be obtained by purely probabilistic methods: we do not consider the geometry of the problem, using only the estimates obtained in Proposition~\ref{prop:Rmoments} for the moments of differences $R_{n,n'}$.

Namely, assume that we are given a family of random variables $\{\txi_{a;n}\}_{a\ge 0, \, n\ge 1}$; \emr{the link with the products of random matrices will be given by setting
\begin{equation}\label{eq:txi-def}
\txi_{a;n}:= \xi_{(a,a+n]}-  \E \xi_{(a,a+n]}.
\end{equation}
Note that the second index in this new family corresponds to the ``\emph{length}'' of the product, and not to the last index; this choice is made due to the importance of this length (the estimates will be uniform in $a$ as $n$ tends to infinity).
}

Define the difference random variables
\begin{equation}\label{eq:tR-def}
\tR_{a;n,n'}:= \txi_{a;n} + \txi_{a+n;n'} -\txi_{a;n+n'},
\end{equation}
and assume that the following assumptions hold:
\begin{enumerate}[(a)]
\item\label{a:center} \textbf{Centering:} $\E\txi_{a;n}=0$ for all $a,n$.
\item\label{a:independence} \textbf{Independence}: for any $a,a',n,n'$ such that $a'\ge a+n$, the random variables $\txi_{a;n}$ and $\txi_{a',n'}$ are independent.
\item\label{a:initial} \textbf{Initial moments:} The variables $\xi_{a,n}$ have finite $2+\eps$-th moments, and
\begin{equation}
\exists C_M: \quad \forall a=0,1,2,\dots, \quad \E|\txi_{a;1}|^{2+\eps}<C_M^{2+\eps}.
\end{equation}
\item\label{a:difference} \textbf{Moments of differences:} There exists $C_R$ such that for any $a$ and any $n,n'$ with $\frac{n'}{2}\le n \le 2n'$, one has
\begin{equation}\label{eq:E-tR}
\E |\tR_{a;n,n'}|^{2+\eps} \le (2C_R)^{2+\eps}.
\end{equation}
\end{enumerate}

\emr{The link~\eqref{eq:txi-def} to our setting of products of random matrices is then provided by the following lemma. }
\begin{lemma}\label{l:txi-assumptions}
Under the assumptions of Theorem~\ref{t.CLT}, the family of random variables \emr{$\{\xi_{a,n}\}$, defined by~\eqref{eq:txi-def},}
satisfies the assumptions~(\ref{a:center})--(\ref{a:difference}) above.
\end{lemma}
\begin{proof}
The only non-trivial conclusion is the estimate~(\ref{a:difference}). To ensure it, recall that Proposition~\ref{prop:Rmoments} guarantees for
$\frac{n'}{2}\le n\le 2n'$ the bound $\E R_{a;n,n'}^{2+\eps}\le C_R^{2+\eps}$, where
\[
R_{a;n,n'}:=\xi_{(a,a+n]}+\xi_{(a+n,a+n+n']}- \xi_{(a,a+n+n']} \ge 0.
\]
Now, this (due to the Jensen's inequality for $x^{1/(2+\eps)}$) implies $\E R_{a;n,n'}\le C_R$, and hence the random variable~$\emr{\tR_{a;n,n'}=R_{a;n,n'}-\E R_{a;n,n'}}$ is the difference between two random variables, whose $L_{2+\eps}(\P)$-norm doesn't exceed $C_R$. The desired~\eqref{eq:E-tR} then follows from the triangle inequality for~$L_{2+\eps}$-norm.
\end{proof}

In the above probability setting, the following estimate holds; it will be proved below, in Section~\ref{s:xi-moments-upper}.

\begin{prop}\label{prop:ximoments}
	Assume that the family $\txi_{a;n}$ satisfies the assumptions (\ref{a:center})--(\ref{a:difference}) above. Then, there exists a constant $\Cxi$ such that for all $a$ and $n$,
	\begin{equation}\label{eq:Moment-2-eps}
	        \E |\txi_{a;n}|^{2+\eps} < \Cxi^{2+\eps} n^{1+\frac{\eps}{2}}.
	\end{equation}
	As a corollary, for any $p\in [0,2+\eps]$, any $a$ and $n$, one has
	\begin{equation}\label{eq:Moment-p}
	        \E |\txi_{a;n}|^{p} < \Cxi^{p} n^{p/2}.
	\end{equation}
\end{prop}

Note that in the case of $\txi_{a;n}$ being sums of independent $\mN(0,1)$-summands (and in particular, all $R_{a;n,n'}=0$), one would have $\txi_{a;n}\sim \mN(0,n)$, and thus the $p$-th moment of $\txi_{a;n}$ would scale exactly as~$n^{p/2}$. The conclusion of Proposition~\ref{prop:ximoments} extends upper bounds with such a scaling to the more general setting of the assumptions above.

Next, we will need a lower growth bound for the variances of~$\txi_{a;n}$, that would grow linearly in~$n$. For that, we will need an extra assumption for the sequence~$\txi_{a;n}$:

\begin{enumerate}[(a)]\addtocounter{enumi}{4}
\item\label{a:growth} \textbf{Variation growth}: For every $c$ there exists $n_1\in \mathbb{N}$ such that
\begin{equation}\label{eq:txi-c}
\forall n\ge n_1 \quad \forall a \quad
\var \txi_{a;n}\ge c.
\end{equation}
\end{enumerate}

Again, this assumption is guaranteed for our matrices-related setting: it is the result of the following proposition.


\begin{prop}\label{p:large}
    Under the assumptions of Theorem \ref{t.CLT}, for any $c>0$ there exists $n_1\in \mathbb{N}$ such that for any $n\ge n_1$ and any collection of distributions $\mgr_1,\dots,\mgr_n\in \mathcal{K}$ one has
    \[
    \var_{\mgr_1,\dots,\mgr_n} \xi_{n} \ge c.
    \]
\end{prop}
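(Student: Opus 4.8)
The plan is to argue by contradiction: suppose that for some $c>0$ there is a sequence $n_k\to\infty$ and collections $\mgr_1^{(k)},\dots,\mgr_{n_k}^{(k)}\in\mK$ with $\var\xi_{n_k}\le c$. The idea is that a bounded variance forces the log-norm $\xi_n$ to be, up to an $L^2$-bounded error, a \emph{deterministic} quantity, and we want to show this is incompatible with the measures condition and the way $\xi_n$ is built from independent increments. First I would exploit the subadditive decomposition~\eqref{eq:xi-sum}: writing a product of length $m=n+n'$ as a concatenation of two independent halves, we get $\var\xi_{n+n'} = \var(\xi_n + \xi_{(n,n+n']}) - 2\,\mathrm{Cov}(\xi_n+\xi_{(n,n+n']},R_{n,n'}) + \var R_{n,n'}$, and since $\xi_n$ and $\xi_{(n,n+n']}$ are independent while $\var R_{n,n'}$ and the covariance terms are controlled (Cauchy--Schwarz together with Proposition~\ref{prop:Rmoments} and the already-proven $\var\xi_j\le\Cx j$), we obtain a near-superadditivity: $\var\xi_{n+n'} \ge \var\xi_n + \var\xi_{(n,n+n']} - C\sqrt{\max(\var\xi_n,\var\xi_{(n,n+n']})} - C$. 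Iterating a dyadic splitting, a uniform bound $\var\xi_{n_k}\le c$ would propagate down to give $\var\xi_{(j,j+\ell]}\le c'$ for \emph{all} blocks of all lengths $\ell$ (with $c'$ depending only on $c$ and the constants), in particular $\var\xi_\ell\le c'$ for all $\ell$.

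Granting that, the second step is to derive a contradiction from "all blocks have uniformly bounded variance." Here the natural mechanism is the angle/Theta geometry already set up in Section~\ref{s:R-estimates}. Concatenating three independent blocks $T_{(0,n]}$, $T_{(n,2n]}$, $T_{(2n,3n]}$ and using~\eqref{eq:xi-sum} twice, one sees that the fluctuation of $\xi_{3n}$ around its mean is essentially the sum of the three independent centered fluctuations of the blocks, corrected by the (bounded-moment) discrepancies $R$. If each block's fluctuation has variance bounded by $c'$, then so does the triple — consistent — so boundedness per se is self-consistent; the contradiction must instead come from showing that a \emph{single} block of length $n$ must itself have variance $\ge$ (something growing), or more precisely that one cannot have \emph{all} blocks simultaneously of small variance. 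I would extract this from the measures condition via the results of~\cite{M}: the measures condition gives, after finitely many non-stationary steps, a genuine (log-H\"older) regularity and in particular non-atomicity of the image measures, which means the random direction $f_{T_{(0,n]}}(p)$ genuinely spreads out on $\mathbb{RP}^1$. One then shows that this spreading, combined with a matrix whose norm is typically of order $e^{cn}$ (exponential growth on average, \cite[Theorem 1.5]{GK}), forces a non-trivial lower bound on $\var\log|T_{3n}u|$ for a fixed $u$: the position of the most-contracted direction of the middle block relative to the most-expanded direction of the first block is a genuinely random angle, and $\xi$ depends on $\log$ of its sine, whose variance cannot be made arbitrarily small once the angle is non-degenerate on a definite scale.

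The cleanest route for the last point is probably: fix a large $N$ (to be chosen), and consider $\xi_{2N}=\log\|T_{2N}\|$ together with a decomposition $T_{2N}=T_{(N,2N]}T_{(0,N]}$. Conditionally on $T_{(0,N]}$, the vector $v=T_{(0,N]}e_1$ points in a direction $[v]$ that, by the measures-condition regularity from~\cite{M}, is spread over $\mathbb{RP}^1$ on a macroscopic scale with a probability bounded below uniformly in the $\mgr_i$ and in $N$; then apply $B=T_{(N,2N]}$, and note that $\log\|T_{2N}\|$ differs from $\xi_N+\xi_{(N,2N]}-\Theta(B,v)$ by nothing, so the conditional variance of $\xi_{2N}$ given the two norms and the second block's geometry is, up to bounded-moment errors, the variance of $-\Theta(B,v)=-\log\frac{\|B\||v|}{|Bv|}$, and $\Theta(B,v)$ has conditional variance bounded below because $[v]$ lands at a random distance from the contracted direction of $B$, that distance being non-degenerate by the same regularity applied to $B^{-1}$. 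Choosing $N$ large makes the regularity kick in; a fixed lower bound $\var\xi_{2N}\ge\delta_0>0$ results, and feeding this back into the near-superadditivity of the first step (now run \emph{upward} from blocks of length $2N$) gives $\var\xi_n\ge \delta_0\lfloor n/(2N)\rfloor - Cn/(2N)\cdot(\text{small})\to\infty$, so in particular $\var\xi_n\ge c$ for $n$ large, contradicting the assumed sequence. I expect the main obstacle to be making the "the direction $[v]$ is non-degenerate on a macroscopic scale, uniformly" step quantitative and uniform over $\mK$ and over the length — this is exactly where one must invoke~\cite[Theorems 2.4 and 2.9]{M} carefully, controlling both that the image measures are not concentrated near a point (so $\Theta$ is not deterministically small) and that they do not put mass exactly at the bad direction; the remainder of the argument is bookkeeping with the already-established moment bounds for $R_{n,n'}$ and $\xi_n$.
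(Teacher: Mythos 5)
There is a genuine gap, and it sits exactly at the point where your argument has to do the real work: the final step, where you try to upgrade a fixed per-block lower bound $\var\xi_{2N}\ge\delta_0$ to arbitrarily large (indeed linearly growing) variance by running the ``near-superadditivity'' upward. The inequality you actually have from the decomposition $\xi_{n+n'}=\xi_n+\xi_{(n,n+n']}-R_{n,n'}$ and Proposition~\ref{prop:Rmoments} is of the form $\sqrt{\var\xi_{n+n'}}\ \ge\ \sqrt{\var\xi_n+\var\xi_{(n,n+n']}}-\sqrt{C_R}$, where the loss $\sqrt{C_R}$ is of \emph{constant} order. Merging blocks each of variance about $V$ gains only about $\delta_0/(2\sqrt{V})$ at the square-root scale while losing $\sqrt{C_R}$, so the recursion stalls at $V$ of order $\delta_0^2/C_R$ and does not force $\var\xi_n\to\infty$; your claimed bound $\var\xi_n\ge\delta_0\lfloor n/(2N)\rfloor-\cdots$ does not follow. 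This is precisely why the paper's Lemma~\ref{l:unbounded-suffices} takes as \emph{input} that the variances become arbitrarily large: a priori the (non-independent) corrections $R_{n,n'}$, though of bounded moments, could cancel the per-block fluctuations, so ``each block has variance $\ge\delta_0$'' plus subadditivity bookkeeping is essentially the statement you are trying to prove, not a route to it. (Two secondary problems: the downward propagation ``$\var\xi_{n_k}\le c$ implies all sub-blocks have variance $\le c'$'' is not uniform either, since each dyadic level adds $\sqrt{C_R}$ at the square-root scale, so the constant degrades like $(\log n_k)^2$; and the non-degeneracy of the conditional law of the direction $[v]$ given the norms is not what \cite{M} provides, which concerns images of a \emph{fixed} point, unconditionally.)

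The paper avoids this trap with a different mechanism that removes the $R$-corrections altogether. One splits $A_n\dots A_1$ into $m$ short blocks of a fixed length $n_0$ separated by long blocks $D_j$, and conditions on the event $\|D_j\|\ge Q$ for all $j$ (which has probability $\ge\tfrac12$ by the exponential-growth result of Goldsheid once the long blocks are long enough). For $\|D_j\|$ large, $D_j/\|D_j\|$ is close to a rank-one, norm-one map, and for \emph{exactly} rank-one connectors the log-norm of the full product is an exact sum of $m$ independent random variables $\log|\ell_{j+1}(\tB_j v_j)|$ (Lemma~\ref{l:m-eps0}); each has variance $\ge\eps_0>0$ by the Atoms Dissolving Theorem~\ref{p.max.full} combined with a compactness/lower-semicontinuity argument (Lemma~\ref{l:eps0}), and this survives perturbation to nearly rank-one connectors (Lemma~\ref{l:U}, Corollary~\ref{c:Q}), giving conditional variance $\ge m\eps_0/2$ with no correction terms. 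The law of total variance then yields $\var\xi_n\ge\tfrac12\cdot\tfrac{m\eps_0}{2}$, which is made larger than any prescribed $c$ by choosing $m$ large. If you want to salvage your outline, you need some analogue of this independence-without-corrections step; as written, the contradiction never materializes.
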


We will provide the proof of Proposition \ref{p:large} in Section \ref{section:variance}. Now, using this extra assumption, we get the following estimate:

\begin{lemma}\label{l:var-lower}
Under the assumptions~(\ref{a:center})--(\ref{a:growth}) above, there exists $C_1 > 0$ and $n_0$ such that
    \begin{equation}
    	\forall a \quad \forall n\ge n_0 \quad \var \txi_{a;n} \ge C_1^2 n.
    \end{equation}
\end{lemma}
We will prove Lemma~\ref{l:var-lower} in Section~\ref{s:linear-var} below, using the same ideas of controlling the influence of the discrepancy $R_{a;n,n'}$ as those that appear in the proof of Proposition~\ref{prop:ximoments}. We will then proceed in Section~\ref{s.Distance} to the proof of the following result, that immediately implies Theorem~\ref{t.CLT}.

\begin{theorem}\label{t:proba}
Let the family $\txi_{a;n}$ satisfy the assumptions~(\ref{a:center})--(\ref{a:growth}). Then there exist constants $C_1,\Cxi$ and a number $n_0$ such that for every $n>n_0$
\begin{equation}\label{eq:var-lin-growth}
\forall a \quad C_1^2\, n \le \var \txi_{a;n} \le \Cxi^2 \, n,
\end{equation}
and random variables
\begin{equation}
\frac{\txi_{0;n}}{\sqrt{\var \txi_{0;n}}}
\end{equation}
converge to $\mN(0,1)$ in distribution as $n\to\infty$.
\end{theorem}

\subsection{Upper bounds on moments of~$\txi_{a;n}$}\label{s:xi-moments-upper}

\begin{proof}[Proof of Proposition~\ref{prop:ximoments}]
We will start by establishing the conclusion \eqref{eq:Moment-p} for $p=2$ (and thus for all smaller values of~$p$), in other words, a linear upper bound for the growth of the variance of $\txi_{a;n}$:
\begin{equation}\label{eq:C-2}
\exists \CVar: \quad \forall a, \quad \forall n \quad \var \txi_{a;n}\le \CVar^2 \cdot n.
\end{equation}

To do so, we will recurrently define the sequence $c_m$, such that for each $m=1,2,\dots$ one has
\begin{equation}\label{eq:var-c-n}
\forall a \quad \var \txi_{a;m}\le c_m^2 \cdot m.
\end{equation}
Namely, we start by setting $c_1:=C_M$; the estimate~\eqref{eq:var-c-n} is then satisfied due to initial moments assumption~(\ref{a:initial}). Now, to construct $c_m$ with $m>1$, take $n = \left\lfloor \frac{m}{2} \right\rfloor $ and $n' = m - n$.
Then for any $a$,
\[
\var (\txi_{a;n} + \txi_{a+n;n'}) = \var \txi_{a;n} + \var \txi_{a+n;n'} \le c_n^2 \cdot n + c_{n'}^2 \cdot n'\le  \left(\max(c_n,c_{n'})\right)^2 \cdot m.
\]
Now, the $L_2$-norm triangle inequality (or Cauchy-Schwarz inequality)
\[
\sqrt{\var (X-Y)} \le \sqrt{\var X} + \sqrt{\var Y}
\]
holds for any two random variables $X,Y$ with finite second moment; applying it for
\[
\txi_{a;m}=\underbrace{(\txi_{a;n} + \txi_{a+n;n'})}_{X}-\underbrace{\tR_{a;n,n'}}_Y,
\]
and using assumption~(\ref{a:difference}) that implies $\sqrt{\var \tR_{a;n,n'}} \le 2 C_R$, one gets
\begin{equation}\label{eq:var-new}
\frac{1}{\sqrt{m}} \sqrt{\var \txi_{a;m}} \le \max(c_n,c_{n'}) + \frac{2 C_R}{\sqrt{m}}.
\end{equation}
Hence, it suffices to take $c_m$ to be the right hand side of~\eqref{eq:var-new}:
\begin{equation}\label{eq:c-m-recurrence}
c_m:=\max(c_n,c_{n'}) + \frac{2 C_R}{\sqrt{m}},
\end{equation}
to ensure that~\eqref{eq:var-c-n} holds for this~$m$. For the sequence defined by the recurrence relation~\eqref{eq:c-m-recurrence}, it is easy to check by induction that for all $m=2^k+1, \dots, 2^{k+1}$ we have
\[
	c_m \le c_1  + \sum_{j=0}^k \frac{2C_R}{\sqrt{2^j}},
\]
which in turn  implies a uniform bound
\[
	c_m \le C_M + \frac{2C_R}{1-\frac{1}{\sqrt{2}}}=:\CVar,
\]
thus concluding the proof of~\eqref{eq:C-2}. Note also that due to the Jensen inequality, the established estimate also implies
\begin{equation}\label{eq:C-2-p}
\forall a, \quad \forall n \quad \E |\txi_{a;n}|^p\le (\CVar \cdot \sqrt{n})^p
\end{equation}
for every $p\in [0,2]$.

Let us now pass to the estimate of the $2+\eps$-th moments. We are going again to define recurrently a sequence $c'_m$, such that for all $m=1,2,\dots$ one has
\begin{equation}\label{eq:c-prim}
\forall a \quad \E |\txi_{a;m}|^{2+\eps} \le (c'_m\cdot \sqrt{m})^{2+\eps}.
\end{equation}
As before, we start by setting $c'_1:=C_M$; the estimate~\eqref{eq:c-prim} is then satisfied due to initial moments assumption~(\ref{a:initial}). Now, to construct $c_m$ with $m>1$, we take $n = \left\lfloor \frac{m}{2} \right\rfloor $ and $n' = m - n$.
We will use the following lemma (postponing its proof until the end of the proof of Proposition~\ref{prop:ximoments}):
\begin{lemma}\label{l:2+eps}
Let $X$, $Y$ be two independent random variables. Then
\begin{equation}\label{eq:2+eps}
\E |X+Y|^{2+\eps} \le \E |X|^{2+\eps}+ \E |Y|^{2+\eps} + 2^{2+\eps} \left(\E |X| \cdot \E |Y|^{1+\eps}+ \E |X|^{1+\eps} \cdot \E |Y|\right).
\end{equation}
\end {lemma}

Now, let us apply Lemma~\ref{l:2+eps} to $X=\txi_{a;n}$ and $Y=\txi_{a+n;n'}$. Note that due to the upper bound~\eqref{eq:C-2-p} for first and $1+\eps$-th moments, we get
\[
\E |X|^{1+\eps} \cdot \E|Y| \le \CVar^{1+\eps}\, \sqrt{n}^{1+\eps}\cdot \CVar \, \sqrt{n'} \le \CVar^{2+\eps} \sqrt{m}^{2+\eps},
\]
and the same estimate applies to $\E |X|\cdot \E|Y|^{1+\eps}$.
From the estimate~\eqref{eq:2+eps} we thus get
\begin{multline}\label{eq:txi-2-eps}
\E \left| \txi_{a;n}+\txi_{a+n;n'}\right|^{2+\eps} \le \E \left| \txi_{a;n}\right|^{2+\eps} +\E \left| \txi_{a+n;n'}\right|^{2+\eps} + 2 (2\CVar)^{2+\eps} \cdot \sqrt{m}^{2+\eps}
\\
\le (\max(c'_n, c'_{n'}))^{2+\eps}\cdot \left( n^{1+\frac{\eps}{2}} + (n')^{1+\frac{\eps}{2}} \right)+ (4\CVar)^{2+\eps} \cdot m^{1+\frac{\eps}{2}}
\end{multline}
Now, let
\[
\lambda_m:= \left(\frac{n}{m}\right)^{1+\frac{\eps}{2}} + \left(\frac{n'}{m}\right)^{1+\frac{\eps}{2}}.
\]
Taking $2+\eps$-th power root, and using the concavity inequality $\sqrt[2+\eps]{a+b}\le \sqrt[2+\eps]{a}+\sqrt[2+\eps]{b}$, we obtain
\begin{equation}
\left( \E \left| \txi_{a;n}+\txi_{a+n;n'}\right|^{2+\eps}\right)^{1/(2+\eps)} \le (\lambda_m^{1/(2+\eps)} \cdot \max(c'_n, c'_{n'}) + 4\CVar) \cdot \sqrt{m}.
\end{equation}
Now, using the $L_{2+\eps}$--triangle inequality for $(\txi_{a;n} + \txi_{a+n;n'})$ and $\tR_{a;n,n'}$, we get
\[
\left( \E \left| \txi_{a;m}\right|^{2+\eps}\right)^{1/(2+\eps)} \le  (\lambda_m^{1/(2+\eps)} \cdot \max(c'_n, c'_{n'}) + 4\CVar) \cdot \sqrt{m} + 2C_R.
\]
Hence, it suffices to take
\begin{equation}\label{eq:c-prim-m}
c'_m:= \lambda_m^{1/(2+\eps)} \cdot \max(c'_n, c'_{n'}) + 4\CVar + \frac{2C_R}{\sqrt{m}}
\end{equation}
for the upper bound $\E\left| \txi_{a;m}\right|^{2+\eps} \le (c'_m \cdot \sqrt{m})^{2+\eps}$ to hold.

Now, for every $m$ one has $\lambda_m<1$, and the values $\lambda_m$ converge to $2^{-\frac{\eps}{2}}<1$ as $m\to\infty$. Hence, there exists $\lambda<1$ such that for all $m$, one has $\lambda_m^{1/(2+\eps)}<\lambda$. Taking
\[
\Cxi:=\frac{4\CVar+2C_R}{1-\lambda}
\]
to be the fixed point of the map $c\mapsto \lambda c +(4\CVar+2C_R)$, we see by recurrence that the sequence~$(c'_m)$ satisfies $c'_m\le \Cxi$ for all~$m$. We have thus obtained the desired upper bound
\[
\E |\txi_{a;m}|^{2+\eps} \le \Cxi^{2+\eps} \cdot \sqrt{m}^{2+\eps}.
\]

Let us now prove Lemma~\ref{l:2+eps}:
\begin{proof}[Proof of Lemma~\ref{l:2+eps}]
Let us first show that for any $a,b\in \R$,
\begin{equation}\label{eq:a-b}
|a+b|^{2+\eps} \le |a|^{2+\eps} + |b|^{2+\eps}+ 2^{2+\eps}\cdot (|a|^{1+\eps} |b| +|a| \cdot |b|^{1+\eps}).
\end{equation}
Indeed, Jensen's inequality for the function $(1+x)^{2+\eps}$ on $[0,1]$ implies that
\begin{equation}\label{eq:J-x}
\forall x\in [0,1] \quad (1+x)^{2+\eps} \le (1-x)\cdot 1 + x\cdot 2^{2+\eps} \le 1 + 2^{2+\eps}x.
\end{equation}
Now, if $|a|\ge |b|$, then
\begin{equation}\label{eq:a-b-1}
|a+b|^{2+\eps} \le |a|^{2+\eps} \cdot \left(1+\frac{|b|}{|a|}\right)^{2+\eps} \le |a|^{2+\eps} + 2^{2+\eps}\cdot |a|^{1+\eps} |b|,
\end{equation}
where we have applied~\eqref{eq:J-x} with $x=\frac{|b|}{|a|}$. In the same way, if $|b|\ge |a|$, then
\begin{equation}\label{eq:a-b-2}
|a+b|^{2+\eps} \le |b|^{2+\eps} + 2^{2+\eps} \cdot |b|^{1+\eps} |a|.
\end{equation}
Taking the sum of the right hand sides of~\eqref{eq:a-b-1} and~\eqref{eq:a-b-2}, we obtain the desired upper bound~\eqref{eq:a-b}. It now suffices to apply this inequality for $X$ and $Y$, and to take the expectation.
\end{proof}
With Lemma~\ref{l:2+eps} proven, the proof of Proposition~\ref{prop:ximoments} is now complete.
\end{proof}

\subsection{Linear growth of variances}\label{s:linear-var}

This section is devoted to the proof of Lemma~\ref{l:var-lower}; together with already established Proposition~\ref{prop:ximoments}, it implies the linear growth conclusion~\eqref{eq:var-lin-growth} of Theorem~\ref{t:proba}, and thus inequality~\eqref{VarGrowth} in Theorem~\ref{t.CLT}.

\begin{proof}[Proof of Lemma~\ref{l:var-lower}]
    Recall that
    \begin{equation*}
         \txi_{a;n + n'} = (\txi_{a;n} + \txi_{a+n; n'}) - \tR_{a;n, n'};
    \end{equation*}
    Cauchy-Schwarz inequality thus implies that
    \begin{equation}\label{eq:Cauchy-lower}
        \sqrt{\var \txi_{a;n + n'}} \ge \sqrt{\var (\txi_{a;n} + \txi_{a+n; n'}) } - \sqrt{\var \tR_{a;n,n'}}.
    \end{equation}

\emr{We will use~\eqref{eq:Cauchy-lower} to establish by induction that for some $n_0$ and $q>0$ for all $m\ge n_0$ and all $a$ one has
	\begin{equation}\label{eq:var-lower}
        \sqrt{\var \txi_{a;m}} \ge \sqrt{q (m+1)}  + 3 C_R.  
	\end{equation}
	To do that, set $n_0$ to be equal to $n_1$ for which~\eqref{eq:txi-c} holds with $c=16C_R^2$, and let $q:=\frac{C_R^2}{2n_0}$, where $C_R$ is given by the assumption~(\ref{a:difference}). This choice guarantees that~\eqref{eq:var-lower} holds for any $a$ and every $m=n_0,\dots,2n_0-1$: we have
	\[
		\sqrt{\var \txi_{a;m}} \ge \sqrt{16C_R^2} = 4 C_R \ge \sqrt{\frac{C_R^2}{2n_0} \cdot (m+1)} + 3C_R.
	\]
}

    \emr{Now, we proceed by induction to show that~\eqref{eq:var-lower} actually holds for every $m\ge n_0$.}
    Indeed, let $m\ge 2n_0$ be the first number for which~\eqref{eq:var-lower} \emr{has not yet been} established; decompose it as $m=n+n'$, where $n=\lfloor \frac{m}{2} \rfloor$, $n'=\lceil \frac{m}{2} \rceil=m-n$.
    Then, each of the variances $\var \txi_{a;n}$, $\var \txi_{a+n;n'}$ in~\eqref{eq:Cauchy-lower} is bounded from below by $\left(\sqrt{q (n +1)} + 3C_R\right)^2$, and hence
    \begin{multline*}
        \sqrt{\var \txi_{a;n}+ \var \txi_{a+n;n'} } - \sqrt{\var R_{a;n,n'}} \ge
        \\
         \ge \sqrt{2} \cdot (\sqrt{q (n +1)} + 3C_R) - C_R \ge
         \\
        \ge \sqrt{q (m+1)} + (3\sqrt{2}-1) C_R,
    \end{multline*}
    where we have used $2 (n +1) \ge m+1$, as $n=\lfloor \frac{m}{2} \rfloor$. As $3\sqrt{2}-1>3$, this proves the induction step. In particular, for every $m\ge n_0$ and any $a$ we have $\var \txi_{a;m} \ge q m$, that proves the lemma with $C_1=\sqrt{q}=\frac{C_R}{\sqrt{2n_0}}$.

\end{proof}


\emr{Now, define the normalised family
\begin{equation}\label{eq:eta-family}
\eta_{a;n}:=\frac{1}{\sqrt{\var \txi_{a;n}}}\,\txi_{a;n}.
\end{equation}
Combining the upper bound for the $2+\eps$-th moment of $\txi_{a;n}$ by $(\Cxi\sqrt{n})^{2+\eps}$ from Proposition~\ref{prop:ximoments} with a lower bound for its variance by $(C_1 \sqrt{n})^2$ from Lemma~\ref{l:var-lower}, we get the following uniform upper bound.}

\begin{coro}\label{c.3}
Under the assumptions~(\ref{a:center})--(\ref{a:growth}) above, there exists $\bC$, such that for all $n\ge n_0$, \emr{with $n_0$ given by Lemma~\ref{l:var-lower}}, and any $a$, the normalised variable~$\eta_{a;n}$, defined by~\eqref{eq:eta-family}, satisfies
    \[
        \E |\eta_{a;n}|^{2+\eps} < \bC^{\emr{2+\eps}}.
    \]

\end{coro}
\begin{proof}
\emr{Indeed, one has
    \[
        \E |\eta_{a;n}|^{2+\eps} < \frac{(\Cxi \sqrt{n})^{2+\eps}}{(C_1 \sqrt{n})^{2+\eps}} = \frac{\Cxi^{2+\eps}}{C_1^{2+\eps}},
    \]
    hence it suffices to take $\bC:=\Cxi/C_1$.
    }
\end{proof}


\section{Bootstrapping: distance to the Gaussian distribution}\label{s.Distance}
This section is devoted to the bootstrapping arguments that allow to show the convergence to Gaussian law.

\subsection{Preliminaries}

 Let $\xi$, $\xi'$ be two independent random variables with finite \emr{$\left(2+\eps\right)$-th}
moments and comparable variances: for a given constant $C>1$, we have
\[
C^{-1}<\frac{\var \xi}{\var \xi'} < C.
\]
We will provide a value $N'_{\rad}(\xi)$, measuring  quantitatively ``non-Gaussianity'' of the law of $\xi$, such that (under appropriate assumptions) it will be smaller for the sum $\xi+\xi'$ than for the summands separately.

\begin{defi}
    For a random variable $\xi$ we denote by $\varphi_{\xi} (t)$ its \emph{characteristic function}:
    \begin{equation*}
        \varphi_{\xi}(t) = \mathbb{E} e^{it\xi}.
    \end{equation*}
    Now, let
    \begin{equation}\label{e.Nr}
        N_{\rad}(\eta) = \sup_{0< |t|< \rad}  \frac{ \left|\log \left( \varphi_{\eta} (t) e^{t^2 / 2} \right) \right|}{|t|^{\emr{2+\eps}}} , \quad N_{\rad}'(\xi)= N_{\rad}\left(\frac{\xi-\E \xi}{\sqrt{\var \xi}}\right).
    \end{equation}
\end{defi}

Then $\eta\sim \mathcal{N}(0,1)$ if and only if $N_{\rad}(\eta)=0$ for all~$\rad>0$ (as the distribution of a random variable is uniquely determined by its characteristic function). Note also that $N_{\rad}(\eta)$ might be infinite if the corresponding characteristic function $\varphi_{\eta}$ vanishes somewhere on $[-\rad,\rad]$. Finally, the logarithm here is a function of a complex variable (as the characteristic function might be, and most often is, non-real). As soon as~$\varphi_{\eta}(t)$ doesn't vanish on $[-\rad,\rad]$, we define the composition $\log \left( \varphi_{\eta} (t) e^{t^2 / 2} \right)$ by a continuous extension, starting with the value $\log 1=0$ at $t=0$.

\subsection{Initial estimates}\label{s:initial}
To start a bootstrapping argument, one needs some initial bounds, in this case, for the norms $N_{\rad}'(\xi)$ for some $\rad>0$. The first step to obtain these is the following bound for the characteristic functions:

\begin{lemma}\label{l.phi-3}
    Let $X$ be a random variable with
    \[
    \E X=0, \quad \var X =1, \quad \E |X|^{\emr{2+\eps}} < C_X^{\emr{2+\eps}}.
    \]
    Then its characteristic function satisfies
    \begin{equation}\label{eq.X}
        \quad \left|\varphi_X(t)-\left(1-\frac{t^2}{2}\right)\right| \le C_X^{\emr{2+\eps}} \cdot |t|^{\emr{2+\eps}} \ \ \text{for all}\ t\in \mathbb{R}.
    \end{equation}
\end{lemma}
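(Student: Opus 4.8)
The plan is to estimate the characteristic function $\varphi_X(t)=\E e^{itX}$ directly via a Taylor expansion of the exponential, controlling the remainder by the third absolute moment. Write $e^{itX} = 1 + itX - \frac{t^2X^2}{2} + r(t,X)$, where the remainder satisfies the standard pointwise bound $|r(t,X)| \le \frac{|t|^3|X|^3}{6}$ (this is the third-order Taylor remainder for $e^{is}$ at $s=0$, valid for all real $s$). Taking expectations and using $\E X = 0$ and $\E X^2 = \var X = 1$, the linear term vanishes and the quadratic term contributes exactly $-\frac{t^2}{2}$, so
\[
\varphi_X(t) - \left(1 - \frac{t^2}{2}\right) = \E\, r(t,X),
\]
and hence $\left|\varphi_X(t) - \left(1-\frac{t^2}{2}\right)\right| \le \frac{|t|^3}{6}\E|X|^3 \le \frac{C_X}{6}|t|^3 \le C_X|t|^3$.

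First I would recall (or quickly verify) the elementary inequality $\left|e^{is} - \sum_{k=0}^{m}\frac{(is)^k}{k!}\right| \le \frac{|s|^{m+1}}{(m+1)!}$ for all $s\in\mathbb{R}$; for $m=2$ this gives the constant $1/6$. One can prove it by induction on $m$ using the integral representation $e^{is} - \sum_{k=0}^{m}\frac{(is)^k}{k!} = \int_0^s i\left(e^{i\tau} - \sum_{k=0}^{m-1}\frac{(i\tau)^k}{k!}\right)\dd\tau$, or simply cite it as standard. Then I would apply this with $s = tX$ pointwise in the sample space, take expectations, and invoke linearity together with the three moment hypotheses $\E X = 0$, $\var X = 1$ (equivalently $\E X^2 = 1$ since $\E X = 0$), and $\E|X|^3 < C_X$.

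There is essentially no obstacle here: the only mild point to be careful about is that $|X|^3$ is integrable (which is hypothesized), so that $\E|X|$ and $\E X^2$ are automatically finite and all the manipulations — interchanging expectation with the finite Taylor sum — are justified. The constant is comfortably absorbed: $\frac{1}{6}\E|X|^3 < \frac{C_X}{6} < C_X$, so the stated bound holds with room to spare. This lemma then serves as the base case $N_{\rad}'(X)$-type estimate for the bootstrapping in the subsequent sections, since it controls $\varphi_X(t) - (1 - t^2/2)$ and hence, after comparison with $e^{-t^2/2} = 1 - t^2/2 + O(t^4)$, the quantity $\log(\varphi_X(t)e^{t^2/2})$ for $|t|$ below a suitable threshold $\rad$.
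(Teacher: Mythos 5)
Your proof is correct and amounts to essentially the same argument as the paper's: the paper bounds $|\varphi_X''(t)+1|=|\E(X^2(e^{itX}-1))|\le |t|\,\E|X|^3$ using the $1$-Lipschitz property of $e^{ix}$ and then integrates twice, which is exactly an in-line derivation of the third-order Taylor remainder bound you invoke (both routes even yield the sharper constant $C_X/6$, comfortably absorbed into $C_X$).
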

\begin{proof}
    Note that it suffices to establish the \emr{following} estimate for the second derivative~$\varphi''(t)$: for all $t\in\R$,
    \begin{equation}\label{eq.ddphi}
        |\varphi_X''(t) + 1| \le \emr{2} C_X^{\emr{2+\eps}} \, |t|^{\emr{\eps}}.
    \end{equation}
    Indeed, integrating~\eqref{eq.ddphi} two times then suffices to obtain~\eqref{eq.X}:
    \[
    \varphi_{X}(t)- \left(1-\frac{t^2}{2}\right) = \int_0^t \, dt_1 \int_0^{t_1} (\varphi_{X}''(t_2) +1) \, dt_2.
    \]

    Now, let us rewrite the estimated expression in~\eqref{eq.ddphi}:
    \begin{equation}\label{eq.ddphi-E}
    |\varphi_X''(t)+1| = | \E (X^2 (e^{itX}-1)) | \le \E (X^2 |e^{itX}-1|) \le \E (X^2 \cdot \emr{2} |tX|^{\emr{\eps}});
    \end{equation}
    here we have used $\var X=1$ for the first equality, and \emr{the upper bound}
    \begin{equation}\label{eq:exp-ix-bound}
	    \emr{ |e^{ix}-1| \le \min(2, |x|) \le 2|x|^{\eps}  }
    \end{equation}
     for the last inequality. Finally, the right hand side of~\eqref{eq.ddphi-E} can be rewritten as
    \[
    \emr{2} |t|^{\emr{\eps}}\cdot \E |X|^{\emr{2+\eps}} < \emr{2} C_X^{\emr{2+\eps}} |t|^{\emr{\eps}},
    \]
    thus completing the proof.
\end{proof}

Joining this lemma with the estimate from Corollary~\ref{c.3}, we get an initial bound lemma for the family~$\{\txi_{a;n}\}$.

\begin{lemma} \label{lm:Start}
Under the assumptions~(\ref{a:center})--(\ref{a:growth}) above, for the normalised variables~$\eta_{a;n}$, defined by~\eqref{eq:eta-family}, the following estimate holds. There exists $\rad_0 > 0$, such that for any $n\ge n_0$, \emr{where $n_0$ is given by Lemma~\ref{l:var-lower}}, and any~$a$ the value $N_{\rad_0} (\eta_{a;n})$ is well-defined and satisfies
    \begin{equation*}
        N_{\rad_0} (\eta_{a;n}) < 3 \bC^{\emr{2+\eps}} \quad \text{and} \quad \rad_0^{\emr{2+\eps}} N_{\rad_0} (\eta_{a;n}) < \frac{1}{100},
    \end{equation*}
    where $\bC$ is the constant defined in Corollary~\ref{c.3}.

    Moreover, the choice of $\rho_0$ depends only on the constants under the assumptions~(\ref{a:center})--(\ref{a:growth}).
\end{lemma}
\begin{proof}
    Applying Lemma~\ref{l.phi-3}, and multiplying~\eqref{eq.X} by $e^{\frac{t^2}{2}}$, we get
    \begin{equation}\label{eq.C3}
        \left| e^{\frac{t^2}{2}} \varphi_{\eta_{a;n}}(t) - e^{\frac{t^2}{2}} \left(1-\frac{t^2}{2}\right) \right| \le \bC^{\emr{2+\eps}} |t|^{\emr{2+\eps}} \cdot e^{\frac{t^2}{2}}
    \end{equation}
    Now,
    \[
        e^{\frac{t^2}{2}} \left(1-\frac{t^2}{2}\right) = 1 + o(|t|^{\emr{2+\eps}}),
    \]
    thus for sufficiently small $\rad$ one has
    \begin{equation}\label{eq.exp-1}
        \forall |t|\le \rad \qquad \left|e^{\frac{t^2}{2}} \left(1-\frac{t^2}{2}\right) - 1 \right| \le \frac{1}{2} \bC^{\emr{2+\eps}} |t|^{\emr{2+\eps}},
    \end{equation}
    as well as
    \[
        \forall |t|\le \rad \qquad e^{\frac{t^2}{2}} \le e^{\frac{\rad^2}{2}} < \frac{3}{2},
    \]
    hence the right hand side of~\eqref{eq.C3} can be replaced by~$\frac{3}{2} \bC^{\emr{2+\eps}} |t|^{\emr{2+\eps}}$. Joining this with~\eqref{eq.exp-1}, we get
    \[
        \forall |t|\le \rad \qquad \left| e^{\frac{t^2}{2}} \varphi_{\eta_{a;n}}(t) - 1 \right| \le \left(\frac{1}{2}+\frac{3}{2}\right) \cdot  \bC^{\emr{2+\eps}} |t|^{\emr{2+\eps}} = 2 (\bC\cdot |t|)^{\emr{2+\eps}}.
    \]
    Taking
    \[
    \rad_0:=\min\left(\rad, \frac{1}{20 \bC}\right),
    \]
    we ensure that $2(\bC \rad_0)^{\emr{2+\eps}}<\frac{1}{100}$. Hence the function $\log z$ is $\frac{3}{2}$-Lipschitz in the (complex) disc~$U_{2\bC \rad_0^{\emr{2+\eps}}}(1)$, and therefore,
    \[
        \forall |t|\le \rad_0 \quad \left|\log  \left(e^{\frac{t^2}{2}} \varphi_{\eta_{a;n}}(t) \right) \right|\le \frac{3}{2} \cdot 2\bC^{\emr{2+\eps}} |t|^{\emr{2+\eps}},
    \]
    which implies the desired conclusions
    \[
        N_{\rad_0}(\eta_{a;n}) < 3 \bC^{\emr{2+\eps}} \quad \text{ and } \quad \rad_0^{\emr{2+\eps}} N_{\rad_0} (\eta_{a;n}) < \frac{1}{(20 \bC)^{\emr{2+\eps}}} \cdot 3\, \bC^{\emr{2+\eps}} < \frac{1}{100}.
    \]
\end{proof}

\subsection{Sum of two independent variables}\label{s:sum-independent}

The following is the first step of the bootstrapping argument, estimating the decrease of $N'$-values for the sum of two independent random variables. Notice that in addition to the decrease by a linear factor, the parameter $\rad$ (describing the size of the domain) gets increased.
\begin{lemma}\label{lm:Contr}
    For any $C>0$ there exists $\lambda<1$ and $L>1$ such that if for some $\rad>0$ for some independent random variables $\xi,\xi'$ one has
    \[
        C^{-1}<\frac{\var \xi}{\var \xi'}<C
    \]
    and values $N'_{\rad}(\xi),N'_{\rad}(\xi')$ are finite, then
    \begin{equation}\label{eq:N-xi-xi-prim}
        N'_{L\rad}(\xi+\xi') \le \lambda \cdot
        \max( N'_{\rad}(\xi), N'_{\rad}(\xi') ).
    \end{equation}
\end{lemma}
\begin{proof}
Let
\[
    \eta=\frac{\xi-\E\xi}{\sqrt{\var \xi}}, \quad \eta'=\frac{\xi'-\E\xi'}{\sqrt{\var \xi'}}, \quad
    \eta''=\frac{(\xi+\xi')-(\E(\xi+\xi'))}{\sqrt{\var (\xi+ \xi')}},
\]
Also, denote
\[
    c=\sqrt{\frac{\var \xi}{\var\xi + \var \xi'}}, \quad
    c'=\sqrt{\frac{\var \xi'}{\var\xi + \var \xi'}};
\]
then, one has
\[
    \eta''= c \eta + c'\eta',
\]
with the coefficients that satisfy
\[
    c^2+(c')^2=1, \quad c,c'\le \sqrt{\frac{C}{C+1}}<1.
\]
By definition, we have for any $L$
\[
    N'_{L\rad}(\xi+\xi') = N_{L\rad}(\eta'')=N_{L\rad}(c \eta + c'\eta').
\]
Now, for the characteristic functions we have
\[
    \varphi_{c\eta+c'\eta'}(t) = \varphi_{\eta}(ct)\cdot \varphi_{\eta'}(c't),
\]
and as $c^2+(c')^2=1$, we have
\begin{equation}\label{eq:t-cc}
    e^{\emr{\frac{1}{2}} t^2}\varphi_{c\eta+c'\eta'}(t) = e^{\emr{\frac{1}{2}}(ct)^2}\varphi_{\eta}(ct)\cdot
    e^{\emr{\frac{1}{2}}(c't)^2}\varphi_{\eta'}(c't).
\end{equation}
If $cL,c'L\le 1$, taking the logarithm of~\eqref{eq:t-cc} and dividing by $|t|^{\emr{2+\eps}}$, we get
\begin{multline*}
    N_{L\rad}(\eta'') = \sup_{0<|t|< L\rad} \frac{\left|\log \left(e^{\emr{\frac{1}{2}}t^2}\varphi_{\eta''}(t)\right)\right|}{|t|^{\emr{2+\eps}}} \le \\ \le
    \sup_{0<|t|< L\rad} \frac{\left|\log \left(e^{\emr{\frac{1}{2}}(ct)^2}\varphi_{\eta}(ct)\right)\right|}{|t|^{\emr{2+\eps}}} + \sup_{0<|t|< L\rad} \frac{\left|\log \left(e^{\emr{\frac{1}{2}}(c't)^2}\varphi_{\eta'}(c't)\right)\right|}{|t|^{\emr{2+\eps}}}
\end{multline*}
Making the $ct$ and $c't$ variable change in the first and second expressions respectively in the right hand side, we obtain
\begin{multline*}
    N_{L\rad}(\eta'')
    \le  c^{\emr{2+\eps}} N_{cL\rad}(\eta)+ (c')^{\emr{2+\eps}} N_{c'L\rad}(\eta') \le \\
    \le (c^{\emr{2+\eps}}+ (c')^{\emr{2+\eps}}) \cdot \max( N_{\rad}(\eta), N_{\rad}(\eta')) \le
    \\
    \le \max(c^{\emr{\eps}},(c')^{\emr{\eps}})  \cdot \max( N_{\rad}(\eta), N_{\rad}(\eta')).
\end{multline*}

    Taking $L=\sqrt{\frac{C+1}{C}}$ and $\lambda=L^{-\eps}$ concludes the proof.
\end{proof}


\subsection{Correction by an additional term}\label{s:correction}
For the family $\{\txi_{a;n}\}$, one has 
\[
\txi_{a;n+n'} = (\txi_{a;n} + \txi_{a+n;n'}) - \tR_{a;n,n'},
\]
with an additional term $\tR_{a;n,n'}$ present in addition to the sum of independent random variables
\begin{equation}\label{eq:sum-txis}
\st_{a;n,n'}:=\txi_{a;n}+\txi_{a+n;n'},
\end{equation}
Due to this, an additional (and possibly non-independent) term is added to the normalized random variable: for
\begin{equation}\label{eq:X-Y-S}
    X= \frac{\st_{a;n,n'}}{\sqrt{\var \st_{a;n,n'}}}, \quad Y = \frac{\txi_{a;n+n'}}{\sqrt{\var \txi_{a;n+n'}}}
\end{equation}
this term is the difference
\[
    r=r_{a;n,n'}=Y-X.
\]


In this section we control influence of this difference between normalized variables on the corresponding value $N'_{\rho}(\cdot)$. First,
note that $\E r=0$, and its $2+\eps$-th moment $\E |r|^{\emr{2+\eps}}$ satisfies the following upper bound.
\begin{lemma} \label{lm:errorEst}
Under the assumptions~(\ref{a:center})--(\ref{a:growth}) above, and with $n_0$ given by Lemma~\ref{l:var-lower},
    there exists $Q_r < \infty$, such that for every $n, n'\ge n_0$, satisfying
    \begin{equation*}
        \frac{n}{2} \le n' \le 2n,
    \end{equation*}
we have
    \begin{equation}\label{eq:Qr}
        \E |r_{a;n, n'}|^{\emr{2+\eps}} <
        \left(\frac{Q_r}{\sqrt{n + n'}}\right)^{\emr{2+\eps}}
    \end{equation}
\end{lemma}

\begin{proof}
    Note that $r_{a;n,n'}$ can be expressed as
\begin{equation}\label{eq.r}
    r=r_{a;n,n'}= \frac{\sqrt{\var \st_{a;n,n'}}-\sqrt{\var \txi_{a;n+n'}}}{\sqrt{\var \txi_{a;n+n'}}} \cdot X - \frac{1}{\sqrt{\var \txi_{a;n+n'}}} \, \tR_{a;n,n'}.
\end{equation}
Now, the $L_{\emr{2+\eps}}$-norms of both $X$ and $\tR_{a;n,n'}$ are uniformly bounded (due to Corollary~\ref{c.3} and assumption~(\ref{a:difference}) respectively). On the other hand, from the Cauchy-Schwartz inequality one has
\[
\left| \sqrt{\var \st_{a;n,n'}}-\sqrt{\var \txi_{a;n+n'}} \right| \le \sqrt{\var \tR_{a;n,n'}},
\]
and hence, due to Lemma~\ref{l:var-lower}, both coefficients admit upper bounds as $\frac{\const}{\sqrt{n+n'}}$.
\end{proof}

Our next step is to control the influence of such a ``small'' change by $r=Y-X$ on the non-Gaussianity value~$N_{\rad}(\cdot)$. We first estimate the change of the characteristic function. Namely, assuming that we are given random variables $X$ and $Y$ with a bound on their $2+\eps$-th moment, we provide an estimate that tends to zero as a power of $2+\eps$-th moment of their difference.

\begin{lemma}\label{l:X-Y}
Let $X,Y$ be two random variables with
\[
\E X = \E Y =0, \quad
\var X = \var Y=1, \quad
\E |X|^{\emr{2+\eps}},\E |Y|^{\emr{2+\eps}} < C_{X}^{\emr{2+\eps}}.
\]
Assume that for $r=Y-X$ one has $\E |r|^{\emr{2+\eps}}<C_{r}^{\emr{2+\eps}}$. Then for any $t\in \R$ one has
\begin{equation}\label{eq:t-bound}
{|\varphi_{X}(t)- \varphi_{Y}(t)|} \le \emr{5} C_r^{\eps} C_X^{2}  \cdot {|t|^{\emr{2+\eps}}}
\end{equation}
\end{lemma}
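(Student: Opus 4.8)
The plan is to bound the difference of characteristic functions by writing
\[
\varphi_X(t) - \varphi_Y(t) = \E\left(e^{itX} - e^{itY}\right) = \E\left(e^{itX}\left(1 - e^{itr}\right)\right),
\]
using $Y = X + r$. Since $|1 - e^{itr}| \le |t|\cdot|r|$ (as $e^{ix}$ is $1$-Lipschitz) and $|e^{itX}| = 1$, a crude bound gives $|\varphi_X(t) - \varphi_Y(t)| \le |t|\,\E|r|$. This is \emph{not} what we want: it is linear in $|t|$, whereas the claim~\eqref{eq:t-bound} asks for a cubic bound $|t|^3$ (with a correspondingly worse constant). The point is that for small $|t|$ the cubic bound is stronger, and it is exactly the form needed to feed into the $N_{\rad}$-machinery, where everything is normalized by $|t|^3$.

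So the real task is to get the extra two powers of $|t|$, and these must come from the vanishing of low-order terms. Here is where $\E r = 0$ and the control of variances enter. I would expand more carefully: write
\[
\varphi_X(t) - \varphi_Y(t) = \E\left(e^{itX} - 1 - itX\right) - \E\left(e^{itY} - 1 - itY\right),
\]
since $\E X = \E Y = 0$ kills the linear terms and the constants cancel. Each bracket is $O(t^2 X^2)$ and $O(t^2 Y^2)$ respectively by the standard estimate $|e^{is} - 1 - is| \le \tfrac12 s^2$, but subtracting them and using $Y = X+r$ with $\E r = 0$ should produce an additional cancellation at order $t^2$, leaving a remainder controlled by $|t|^3$ times a product of moments of $X$ and $r$. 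Concretely, I expect to arrive at a bound of the shape $|t|^3 \cdot (\E|r|^3)^{1/3}(\E|X|^3 + \E|Y|^3)^{2/3}$ after an application of Hölder's inequality (with exponents $3$ and $3/2$) to the mixed moment $\E(|X|^2|r|)$ or $\E(|X||r|^2)$ type terms that appear. This matches the $C_r^{1/3}C_X^{2/3}|t|^3$ form in the statement.

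The cleanest route is probably to combine the two regimes: for $|t|$ large, use the cheap estimate, which already gives something bounded (indeed, $|\varphi_X - \varphi_Y| \le 2$ trivially), and note that when $|t| \ge (\text{const})$ the right-hand side $C_r^{1/3}C_X^{2/3}|t|^3$ exceeds $2$ anyway since $\E|r|^3 \ge (\E r^2)^{3/2}$ is bounded below by a positive constant (variances being positive) — actually one should check whether the statement is meant to hold for \emph{all} $t$ or whether small $t$ suffices; the LaTeX says "for any $t\in\R$", so the large-$|t|$ case must be handled, and it is handled exactly by this observation that the claimed bound is $\gtrsim |t|^3$ which blows up. For small $|t|$, use the Taylor expansion argument above. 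The main obstacle — modest, but the only real content — is making the order-$t^2$ cancellation precise: one must verify that $\E(X^2 e^{i\tau X}) - \E(Y^2 e^{i\tau Y})$-type quantities, integrated against $t$ twice, leave only a genuinely cubic remainder, which amounts to writing $Y^2 = X^2 + 2Xr + r^2$, tracking the term $\E(2Xr)$ (which is not zero, but is controlled: $|\E(Xr)| = |\operatorname{Cov}(X,r)| \le (\E X^2)^{1/2}(\E r^2)^{1/2}$ and this contributes at order $t^2$ — so in fact one needs this to \emph{also} cancel or be absorbed). Re-examining, I suspect the honest statement uses that $\var X = \var Y = 1$ forces $2\E(Xr) + \E r^2 = \E Y^2 - \E X^2 = 0$, i.e.\ $\E(Xr) = -\tfrac12\E r^2$, so the order-$t^2$ contribution is itself $O(\E r^2) = O((\E|r|^3)^{2/3})$, which combined with a factor $|t|^2$ is dominated by $|t|^3 \cdot (\E|r|^3)^{1/3}$ precisely on the range where $|t|(\E|r|^3)^{1/3} \lesssim 1$, and outside that range the trivial bound closes the gap. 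Assembling these pieces gives~\eqref{eq:t-bound}.
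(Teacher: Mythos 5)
Your overall plan --- kill the constant, linear and quadratic terms of $\varphi_X-\varphi_Y$ using $\E X=\E Y=0$ and $\var X=\var Y$, then produce the cubic bound by H\"older applied to mixed moments of $|r|$, $|X|$, $|Y|$ --- is exactly the mechanism behind the lemma, and is essentially what the paper does (at the level of second derivatives). But as written the proposal has a genuine gap: you never actually close the argument at order $t^2$, which is the only real content of the lemma. The coefficient of $t^2$ in $\varphi_X(t)-\varphi_Y(t)$ is $-\tfrac12(\E X^2-\E Y^2)=0$ \emph{exactly}; you even derive the relevant identity $2\E(Xr)+\E r^2=0$, but then treat the order-$t^2$ contribution as merely $O(\E r^2)$ and try to absorb a term of size $t^2\E r^2$ into $C_r^{1/3}C_X^{2/3}|t|^3$. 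That domination holds only for $|t|\gtrsim C_r^{1/3}C_X^{-2/3}$ (your stated range ``$|t|\,C_r^{1/3}\lesssim 1$'' is the wrong condition), and in the genuinely problematic regime of small $|t|$ neither the trivial bound $|\varphi_X-\varphi_Y|\le 2$ nor the Lipschitz bound $|t|\,\E|r|$ closes it. The large-$|t|$ discussion is also based on a false claim: nothing in the hypotheses bounds $\E r^2$ (hence $\E|r|^3$) from below --- $r$ may be arbitrarily small or zero, and in the intended application $C_r\sim Q_r(n+n')^{-3/2}\to 0$ --- so the assertion that the right-hand side of \eqref{eq:t-bound} ``exceeds $2$ anyway'' fails. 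In fact no case-splitting in $t$ is needed at all.

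The fix is to use the exact cancellation rather than an $O(\cdot)$ bound. Set $g(s)=e^{is}-1-is+\tfrac{s^2}{2}$; since $\E X=\E Y=0$ and $\E X^2=\E Y^2=1$, one has $\varphi_X(t)-\varphi_Y(t)=\E\bigl(g(tX)-g(tY)\bigr)$ for all $t$. Because $g'(s)=i(e^{is}-1-is)$, so $|g'(s)|\le \tfrac{s^2}{2}$, the mean value theorem gives $|g(tX)-g(tY)|\le \tfrac{|t|^3}{2}\,|r|\,(X^2+Y^2)$, and H\"older with exponents $3$ and $\tfrac32$ yields $\E\bigl(|r|X^2\bigr),\,\E\bigl(|r|Y^2\bigr)\le C_r^{1/3}C_X^{2/3}$, hence $|\varphi_X(t)-\varphi_Y(t)|\le C_r^{1/3}C_X^{2/3}|t|^3$ for every $t\in\R$. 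This is the integrated form of the paper's proof, which instead writes $(\varphi_X-\varphi_Y)''(t)=-\E\bigl(X^2(e^{itX}-1)-Y^2(e^{itY}-1)\bigr)$ (using $\var X=\var Y$), bounds it by $3C_r^{1/3}C_X^{2/3}|t|$ via $Y^2-X^2=r(X+Y)$, the Lipschitz bound on $e^{ix}$ and H\"older, and then integrates twice using that the difference and its first derivative vanish at $t=0$.
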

\begin{proof}
Note that it suffices to obtain an estimate
\begin{equation}\label{eq:t-rho}
|(\varphi_{X }-\varphi_{Y})''(t)| \le \emr{10\, } C_r^{\eps} C_X^{2}  \cdot |t|^{\emr{\eps}};
\end{equation}
indeed, using
\[
\varphi_{X}(t)- \varphi_{Y}(t) = \int_0^t \, dt_1 \int_0^{t_1} (\varphi_{X }-\varphi_{Y})''(t_2) \, dt_2,
\]
and integrating~\eqref{eq:t-rho} two times, we get the desired~\eqref{eq:t-bound}.

Now, the second derivative of the difference of the characteristic functions can be rewritten in the following way.
\begin{equation}\label{eq:ddphi}
(\varphi_X-\varphi_Y)''(t) = -\E (X ^2 e^{itX} - Y^2 e^{i t Y}) = -\E (X ^2 (e^{itX}-1) - Y^2 (e^{i t Y}-1)),
\end{equation}
where the second equality follows from  $\var X=\var Y$.

In order to obtain the estimate~\eqref{eq:t-rho}, let us decompose the right hand side of~\eqref{eq:ddphi}:
\[
|\varphi_{X}''(t)- \varphi_{Y}''(t)| \le  \E |(X ^2-Y^2) (e^{itX}-1)| + \E (Y^2 |e^{i t Y}-e^{i t X}|).
\]
Using~\eqref{eq:exp-ix-bound}, we can estimate the second summand:
\[
\E (Y^2 |e^{i t Y}-e^{i t X}|) \le \E (Y^2 \cdot \emr{2} |t(X-Y)|^{\emr{\eps}}) = |t|^{\emr{\eps}} \cdot \E (\emr{2}|r|^{\emr{\eps}} \cdot Y^2);
\]
the right hand side now does not exceed $\emr{2} C_r^{\eps} C_X^{2} |t|^{\emr{\eps}}$ due to the H\"older inequality with the exponents $p=\frac{2+\eps}{\eps}$ and $q=\frac{2+\eps}{2}$.

To estimate the first summand, note that $Y^2-X^2 = r(X+Y)$, hence it does not exceed
\[
\E |(X ^2-Y^2) (e^{itX}-1)| \le \E \left( |r| \cdot (|X|+|Y|) \cdot \emr{2}|tX|^{\emr{\eps}}\right).
\]
Moreover, as $|r|\le |X|+|Y|$ and hence $|r|\le |r|^{\eps} (|X|+|Y|)^{1-\eps}$, this summand does not exceed
\[
\E \left(|r|^{\eps} \cdot 2 (|X|+|Y|)^2\right) \cdot |t|^{\eps},
\]
and due to the H\"older inequality with the same exponents $p,q$ as above it does not exceed $\emr{8 \,}C_r^{\eps} C_X^2\cdot |t|^{\eps}$.
Adding these estimates together, we obtain the desired~\eqref{eq:t-rho}.
\end{proof}

Now, apply these estimates to estimate the change of $N_{\rho}(\cdot)$:

\begin{prop}\label{p:r-X-Y}
Under the assumptions of Lemma~\ref{l:X-Y}, let $K_r:=\emr{5\,} C_r^{\eps} C_X^{2}$ be the factor that appears in its conclusion~\eqref{eq:t-bound}. Assume additionally that for some $\rad>0$ one has
\begin{equation}\label{eq.r3N}
    \rad^{\emr{2+\eps}} N_{\rad}(X)\le \frac{1}{100}
\end{equation}
and
\begin{equation}\label{eq.Ke}
    K_r \rad^{\emr{2+\eps}} e^{\frac{\rad^2}{2}} \le \frac{1}{100}.
\end{equation}
Then
\[
N_{\rad}(Y) \le N_{\rad}(X) + 2 K_r e^{\frac{\rad^2}{2}}.
\]
\end{prop}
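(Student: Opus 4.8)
The plan is to compare the characteristic functions of $X$ and $Y$ on the disc $|t|\le\rad$ and then transfer the estimate to the logarithms that define $N_{\rad}$. First I would record that under hypothesis~\eqref{eq.r3N} the quantity $e^{t^2/2}\varphi_X(t)$ stays in a small complex neighbourhood of $1$ for all $|t|\le\rad$: indeed $|\log(e^{t^2/2}\varphi_X(t))|\le N_{\rad}(X)|t|^3\le \rad^3 N_{\rad}(X)\le \tfrac1{100}$, so $e^{t^2/2}\varphi_X(t)\in U_{1/50}(1)$ (using that $|e^w-1|\le 2|w|$ for small $w$), and in particular $\varphi_X(t)\neq 0$ and $|\varphi_X(t)|\ge \tfrac12 e^{-t^2/2}$ on that disc. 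This is the analogue of the non-vanishing check already performed in Lemma~\ref{lm:Start}.

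Next I would invoke Lemma~\ref{l:X-Y} to get $|\varphi_X(t)-\varphi_Y(t)|\le K_r|t|^3$ for all $t$, hence
\[
\left| e^{t^2/2}\varphi_Y(t) - e^{t^2/2}\varphi_X(t)\right| \le K_r |t|^3 e^{t^2/2} \le K_r \rad^3 e^{\rad^2/2}\le \tfrac1{100}
\]
for $|t|\le\rad$, where the last step uses~\eqref{eq.Ke}. Combining with the previous paragraph, $e^{t^2/2}\varphi_Y(t)$ lies in $U_{1/50+1/100}(1)\subset U_{1/10}(1)$, so it too is nonzero and the continuous branch of $\log(e^{t^2/2}\varphi_Y(t))$ starting from $0$ at $t=0$ is well-defined on $|t|\le\rad$. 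On this disc $\log$ is Lipschitz with a constant close to $1$ — say at most $\tfrac{3}{2}$, as in Lemma~\ref{lm:Start} — so
\[
\left|\log\!\left(e^{t^2/2}\varphi_Y(t)\right) - \log\!\left(e^{t^2/2}\varphi_X(t)\right)\right| \le \tfrac32\left| e^{t^2/2}\varphi_Y(t) - e^{t^2/2}\varphi_X(t)\right| \le \tfrac32 K_r|t|^3 e^{t^2/2}\le \tfrac32 K_r|t|^3 e^{\rad^2/2}.
\]
Dividing by $|t|^3$, taking the supremum over $0<|t|<\rad$, and applying the triangle inequality for $\sup$ gives $N_{\rad}(Y)\le N_{\rad}(X) + \tfrac32 K_r e^{\rad^2/2}$, which is even a bit stronger than the claimed bound $N_{\rad}(X)+2K_r e^{\rad^2/2}$; one can of course just use the cruder Lipschitz constant $2$ (valid since $e^{t^2/2}\varphi_X(t)$ is safely inside $U_{1/2}(1)$) to land exactly on the stated inequality.

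The only genuinely delicate point — the main obstacle — is the bookkeeping around the complex logarithm: one must verify that both $e^{t^2/2}\varphi_X(t)$ and $e^{t^2/2}\varphi_Y(t)$ remain in a fixed small disc around $1$ for \emph{all} $|t|\le\rad$ simultaneously (not just pointwise nonvanishing), so that the principal branch of $\log$ is single-valued there and has a controlled Lipschitz constant, and that the continuous extension from $t=0$ agrees with that branch. Hypotheses~\eqref{eq.r3N} and~\eqref{eq.Ke} are tailored precisely so that the radii $\tfrac1{50}$ and $\tfrac1{100}$ add up to something well below $\tfrac12$; once this is in place the rest is the routine ``integrate the second derivative'' mechanism already used twice in the section, so no new ideas are needed.
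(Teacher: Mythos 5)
Your proof is correct and follows essentially the same route as the paper: use~\eqref{eq.r3N} to place $e^{t^2/2}\varphi_X(t)$ in a small disc around $1$, combine Lemma~\ref{l:X-Y} with~\eqref{eq.Ke} to place $e^{t^2/2}\varphi_Y(t)$ there as well, and then transfer the bound through the Lipschitz constant of $\log$ near $1$ (the paper uses the cruder constant $2$ on $U_{1/25}(1)$, which is exactly your fallback). No gaps; the branch-of-logarithm bookkeeping you flag is handled the same way in the paper's argument.
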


\begin{proof}
    Note first that due to~\eqref{eq.r3N}, for any $|t|\le \rad $ we have
    \[
        \left|\log \left(\varphi_{X}(t) e^{\frac{t^2}{2}}\right)\right| \le \frac{1}{100}
    \]
    and hence
    \[
        \left | \varphi_{X}(t) e^{\frac{t^2}{2}}  -1 \right | \le \frac{1}{50}
    \]
    (as the exponent function is $2$-Lipschitz in $U_{1/100}(0)$).

    At the same time, the conclusion of Lemma~\ref{l:X-Y} and the assumption~\eqref{eq.Ke} imply that for any $|t|\le \rad$ one has
    \[
        \left| \varphi_{X}(t) e^{\frac{t^2}{2}} -
        \varphi_{Y}(t) e^{\frac{t^2}{2}}
        \right|
        =
        \left| \varphi_{X}(t) -
        \varphi_{Y}(t)
        \right| \cdot e^{\frac{t^2}{2}} \le K_r \rad^{\emr{2+\eps}} \cdot e^{\frac{\rad^2}{2}} \le \frac{1}{100},
    \]
    hence altogether
    \[
        \left| \varphi_{Y}(t) e^{\frac{t^2}{2}} - 1\right| \le
        \left| \varphi_{X}(t) e^{\frac{t^2}{2}} -
        \varphi_{Y}(t) e^{\frac{t^2}{2}}
        \right|+
        \left| \varphi_{X}(t) e^{\frac{t^2}{2}} - 1\right| \le \frac{1}{100}+\frac{1}{50} \le \frac{1}{25}.
    \]
    Finally, the logarithm function is $2$-Lipschitz in $U_{\frac{1}{25}}(1)$, and thus for such $t$
    \begin{multline}
        \left|\log \left(\varphi_{Y}(t) e^{\frac{t^2}{2}}\right) \right| \le
        \left|\log \left(\varphi_{X}(t) e^{\frac{t^2}{2}}\right) \right| +
        2 \cdot \left|\varphi_{Y}(t) e^{\frac{t^2}{2}} - \varphi_{X}(t) e^{\frac{t^2}{2}}   \right| \\
        \le N_{\rad}(X) |t|^{\emr{2+\eps}}  + 2K_r e^{\frac{t^2}{2}} |t|^{\emr{2+\eps}}  \le (N_{\rad}(X)+2K_r e^{\frac{t^2}{2}}) \cdot |t|^{\emr{2+\eps}}.
    \end{multline}
    This implies the desired
    \[
        N_{\rad}(Y) \le N_{\rad}(X)+2 K_r e^{\frac{\rad^2}{2}}.
    \]
\end{proof}

Let us now apply Proposition~\ref{p:r-X-Y} to the family $\{\txi_{a;n}\}$
that occurs in the setting of Theorem~\ref{t:proba}.

\begin{coro}\label{c:step}
    Under the assumptions of  Theorem~\ref{t:proba},
    there exists a constant $K$ such that if for some $\rad>0$, some $a$, some $n,n'\ge n_0$, \emr{with $n_0$ given by Lemma~\ref{l:var-lower}, as soon as} $\frac{n}{2}\le n'\le 2n$, one has
    \begin{equation}\label{eq.r3N-eta}
        \rad^{\emr{2+\eps}} N'_{\rad}(\txi_{a;n}+\txi_{a+n;n'})\le \frac{1}{100}
    \end{equation}
    and
    \begin{equation}\label{eq.K-n-sqrt}
        \rad^{\emr{2+\eps}} \frac{K}{(n+n')^{\emr{\eps/2}}} e^{\frac{\rad^2}{2}} \le \frac{1}{100},
    \end{equation}
    then
    \[
        N'_{\rad}(\txi_{a;n+n'}) \le N'_{\rad}(\txi_{a;n}+\txi_{a+n;n'})+ 2 \frac{K}{(n+n')^{\emr{\eps/2}}} e^{\frac{\rad^2}{2}}.
    \]
\end{coro}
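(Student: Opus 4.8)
The plan is to recognize Corollary~\ref{c:step} as a direct specialization of Proposition~\ref{p:r-X-Y} to the pair of random variables $X=\eta_{n,n'}$ and $Y=\eta_{n+n'}$ introduced in~\eqref{eq.X-Y-def} and~\eqref{eq.eta-notation}, with the perturbation $r=r_{n,n'}=Y-X$. So the first step is to check that this pair meets the hypotheses of Lemma~\ref{l:X-Y} (and hence of Proposition~\ref{p:r-X-Y}): both $X$ and $Y$ are centered and have unit variance by construction, and their third moments are controlled by the constant $C_3$ of Corollary~\ref{c.3} --- the bound $\E|\eta_{n,n'}|^3<C_3$ is~\eqref{eq.C-3-2} (valid since $n,n'\ge n_0$), and $\E|\eta_{n+n'}|^3<C_3$ is~\eqref{eq.C-3} applied to the index $n+n'\ge n_0$. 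Finally, Lemma~\ref{lm:errorEst} supplies $\E|r_{n,n'}|^3<Q_r(n+n')^{-3/2}$, using the standing assumption $\frac{n}{2}\le n'\le 2n$.

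The second step is to track the constant. Taking $C_X:=C_3$ and $C_r:=Q_r(n+n')^{-3/2}$ in the notation of Proposition~\ref{p:r-X-Y}, the factor appearing there is $K_r=C_r^{1/3}C_X^{2/3}=Q_r^{1/3}C_3^{2/3}\cdot(n+n')^{-1/2}$. Defining the universal constant $K:=Q_r^{1/3}C_3^{2/3}$, this reads $K_r=K/\sqrt{n+n'}$. With this identification, the hypothesis~\eqref{eq.r3N} of Proposition~\ref{p:r-X-Y}, namely $\rad^3 N_{\rad}(X)\le\frac{1}{100}$, becomes exactly~\eqref{eq.r3N-eta}, and the hypothesis~\eqref{eq.Ke}, namely $K_r\rad^3 e^{\rad^2/2}\le\frac{1}{100}$, becomes exactly~\eqref{eq.K-n-sqrt}. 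The final step is then to quote the conclusion of Proposition~\ref{p:r-X-Y}, $N_{\rad}(Y)\le N_{\rad}(X)+2K_r e^{\rad^2/2}$, which upon substituting $X=\eta_{n,n'}$, $Y=\eta_{n+n'}$ and $K_r=K/\sqrt{n+n'}$ is precisely the claimed inequality.

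There is no genuine obstacle here beyond this bookkeeping; the corollary is a repackaging of the already-established Proposition~\ref{p:r-X-Y} tailored to the situation of random matrix products. The only points requiring a little care are that the third-moment bound $C_X$ in Proposition~\ref{p:r-X-Y} must simultaneously dominate the third moments of $X$ and of $Y$, which is why one uses the single uniform constant $C_3$ from Corollary~\ref{c.3} rather than the $n$-dependent bounds of Proposition~\ref{prop:ximoments}; and that all three input estimates (\eqref{eq.C-3}, \eqref{eq.C-3-2}, and Lemma~\ref{lm:errorEst}) are available precisely because we are assuming $n,n'\ge n_0$ and $\frac{n}{2}\le n'\le 2n$.
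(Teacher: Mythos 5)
Your proposal is correct and matches the paper's own proof: the paper likewise applies Proposition~\ref{p:r-X-Y} to $X=\eta_{n,n'}$, $Y=\eta_{n+n'}$ with $C_X=C_3$ from Corollary~\ref{c.3}, bounds $K_r\le K/\sqrt{n+n'}$ via Lemma~\ref{lm:errorEst} with $K=Q_r^{1/3}C_3^{2/3}$, and reads off the conclusion. (The only cosmetic difference is that \eqref{eq.K-n-sqrt} bounds the quantity $K/\sqrt{n+n'}\ge K_r$, so hypothesis~\eqref{eq.Ke} is implied by, rather than identical to, \eqref{eq.K-n-sqrt}, which is exactly how the paper uses it.)
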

\begin{proof}
	Take $X$ and $Y$, defined by~\eqref{eq:X-Y-S}, so that
	\[
		N_{\rho}(X)=N'_{\rho} (\txi_{a;n}+\txi_{a+n;n'}), \quad 		
		N_{\rho}(Y)=N'_{\rho} (\txi_{a;n+n'}).
	\]
	Applying Lemma~\ref{lm:errorEst}, from its conclusion~\eqref{eq:Qr} we get an estimate for the $L_{2+\eps}$-norm of their difference $r=r_{a;n,n'}=Y-X$:
	\[
		C_r:=\left(\E |r|^{2+\eps}\right)^{1/(2+\eps)} \le \frac{Q_r}{\sqrt{n+n'}}.
	\]
	Hence, the value $K_r=\emr{5} C_r^{\eps} C_X^{2}$ in Proposition~\ref{p:r-X-Y} is bounded from above by
    \[
        K_r = \emr{5\, } C_r^{\eps} C_X^{2} \le \emr{5\, } \left(\frac{Q_r}{\sqrt{n+n'}}\right)^{\emr{\eps}} \cdot C_X^{2} = \frac{K}{ (n+n')^{\emr{\eps/2}}},
    \]
    where
    \[
        K:= \emr{5 \,} Q_r^{\emr{\eps}}C_X^{2}.
    \]
    The conclusion then immediately follows from Proposition~\ref{p:r-X-Y}.
\end{proof}

\section{Proof of the main result}\label{s:main}

Joining the results of the previous sections, we obtain the following proposition: 
\begin{prop}\label{p.boot}
Under the assumptions of Theorem~\ref{t:proba}, there exist sequences $\rad_n\to\infty$, $\delta_n \to 0$, such that for any $n\ge n_0$, \emr{where $n_0$ is given by Lemma~\ref{l:var-lower}}, and any $a$, one has
\begin{equation}\label{eq:Np-delta}
    N'_{\rad_n}(\txi_{a;n}) \le \delta_n.
\end{equation}
\end{prop}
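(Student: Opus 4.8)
The plan is to run a doubling induction on the dyadic scale using the bootstrapping machinery assembled in Sections~\ref{s.Distance}: Lemma~\ref{lm:Contr} (contraction of $N'$ under summing two comparable independent variables, at the price of enlarging the domain by a factor $L>1$) and Corollary~\ref{c:step} (control of the additional $R_{n,n'}$-correction, which only adds a term of size $O((n+n')^{-1/2})$). First I would fix, once and for all, the constant $C$ in Lemma~\ref{lm:Contr} to be the ratio bound for $\var\xi_n$ over $\var\xi_{(n,n+n']}$ available from~\eqref{VarGrowth} when $n/2\le n'\le 2n$ (this is where the $C_1 n\le \var\xi_n\le C_2 n$ estimate enters: it makes the variances of the two halves comparable), obtaining the corresponding $\lambda<1$ and $L>1$. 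Then define the scales $\rad_n$ only along the dyadic sequence $n=2^k n_0$ (and extend to general $n$ afterwards by a bounded comparison argument, or simply state the proposition along a subsequence and note that $N'_\rad$ is monotone in a way that transfers): set $\rad_{2^{k}n_0}=L^{k}\rad_0$ where $\rad_0$ is from Lemma~\ref{lm:Start}. Since $L>1$, indeed $\rad_n\to\infty$.

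The core is the inductive step. Suppose at scale $m=2^k n_0$ we have $N'_{\rad_m}(\xi_m)\le\delta_m$ together with the smallness condition $\rad_m^3\,\delta_m\le \tfrac1{100}$, uniformly over $\mu_1,\dots,\mu_m\in\mK$. Passing to scale $2m$: write $\xi_{2m}=(\xi_m+\xi_{(m,2m]})-R_{m,m}$. Apply Lemma~\ref{lm:Contr} to $\xi=\xi_m$ and $\xi'=\xi_{(m,2m]}$ (independent, with comparable variances by~\eqref{VarGrowth}): this gives $N'_{L\rad_m}(\theta_{m,m})\le\lambda\max(N'_{\rad_m}(\xi_m),N'_{\rad_m}(\xi_{(m,2m]}))\le\lambda\delta_m$ — here one uses that the bound $\delta_m$ holds uniformly over sequences from $\mK$, so in particular for the shifted sequence $\mu_{m+1},\dots,\mu_{2m}$ governing $\xi_{(m,2m]}$. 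Then apply Corollary~\ref{c:step} with $\rad=L\rad_m=\rad_{2m}$: provided $\rad_{2m}^3\,(\lambda\delta_m)\le\tfrac1{100}$ (which we must verify propagates) and $\rad_{2m}^3\,\tfrac{K}{\sqrt{2m}}e^{\rad_{2m}^2/2}\le\tfrac1{100}$, we conclude
\[
N'_{\rad_{2m}}(\xi_{2m})\ \le\ N'_{\rad_{2m}}(\theta_{m,m})+2\frac{K}{\sqrt{2m}}e^{\rad_{2m}^2/2}\ \le\ \lambda\,\delta_m+2\frac{K}{\sqrt{2m}}e^{\rad_{2m}^2/2}\ =:\ \delta_{2m}.
\]
So the recursion is $\delta_{2m}=\lambda\delta_m+\eps_m$ with $\eps_m:=2K(2m)^{-1/2}e^{\rad_{2m}^2/2}$, and $\rad_{2m}=L\rad_m$.

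The main obstacle is the tension between the two competing growths in the error term $\eps_m$: $\rad_{2m}=L^k\rad_0$ grows geometrically, so $e^{\rad_{2m}^2/2}$ grows like $e^{cL^{2k}}$ — doubly exponentially in $k$ — while $(2m)^{-1/2}=(2^{k+1}n_0)^{-1/2}$ only decays like $2^{-k/2}$. Taken at face value this makes $\eps_m$ blow up, and both the smallness hypotheses~\eqref{eq.r3N-eta} and~\eqref{eq.K-n-sqrt} of Corollary~\ref{c:step} would fail for large $k$. The resolution is that $L$ must be chosen sufficiently close to $1$ relative to the doubling: one does not have to take $\rad_m=L^k\rad_0$ with the $L$ coming from Lemma~\ref{lm:Contr} — one may instead grow the radius more slowly, $\rad_m=L_1^k\rad_0$ for some $1<L_1\le L$ of our choosing (enlarging the domain by less than the maximum allowed only helps, since $N'$ is a supremum over a larger domain as the domain grows, and Lemma~\ref{lm:Contr}'s contraction with factor $\lambda<1$ still holds on any sub-domain). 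Choosing $L_1$ with $L_1^2<2$, i.e. $\rad_{2m}^2=L_1^2\rad_m^2$ with $L_1^2<2$, one arranges $e^{\rad_{2m}^2/2}/\sqrt{2m}\to 0$ still — wait, this needs $\rad_m^2=o(\log m)$, i.e. $L_1^{2k}=o(k)$, which is impossible for $L_1>1$. Hence the honest fix is: let $\rad_m\to\infty$ but only like $\rad_m^2\sim \beta\log m$ with $\beta$ small, i.e. $\rad_m=\sqrt{\beta\log m}$; then $e^{\rad_m^2/2}=m^{\beta/2}$ and $\eps_m\sim m^{\beta/2-1/2}\to 0$ for $\beta<1$, while $\rad_m\to\infty$ as required. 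The cost is that the domain-enlargement per doubling is now $L_m=\rad_{2m}/\rad_m=\sqrt{\log(2m)/\log m}\to 1$, so the contraction factor $\lambda_m$ from Lemma~\ref{lm:Contr} (applied with the fixed variance-ratio bound $C$) tends to $1/L_m\to1$ — but one must recheck Lemma~\ref{lm:Contr}: its $\lambda=1/L$ where $L=\sqrt{(C+1)/C}$ is determined by $C$ alone, \emph{not} by how much we enlarge the domain, so in fact $\lambda<1$ is a fixed constant and we are free to enlarge the domain by any factor $\le L$; taking the enlargement factor $L_m\in(1,L]$ with $L_m\to1$ slowly is legitimate and the contraction stays $\lambda<1$ fixed. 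Then $\delta_{2m}\le\lambda\delta_m+\eps_m$ with $\lambda<1$ fixed and $\eps_m=2K(2m)^{-1/2}(2m)^{\beta/2}=2K(2m)^{(\beta-1)/2}\to0$, so $\delta_m\to0$ by a standard argument (e.g. $\limsup\delta_m\le\frac{1}{1-\lambda}\limsup\eps_m=0$, using $\eps_m$ monotone-ish or just Cesàro). Finally, one verifies by induction that the smallness conditions $\rad_m^3\delta_m\le\frac1{100}$ and $\rad_m^3\frac{K}{\sqrt m}e^{\rad_m^2/2}\le\frac1{100}$ survive: the second is $\beta$-dependent and holds for all large $m$ once $\beta<1$ (and for small $m$ after possibly enlarging $n_0$), while the first follows since $\rad_m^3\delta_m=(\beta\log m)^{3/2}\delta_m\to0$ because $\delta_m$ decays polynomially in $m$ (from $\delta_m\le\lambda^{\,k}\delta_{n_0}+\sum\lambda^{k-1-j}\eps_{2^j n_0}$, all terms polynomially small). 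The base case $m=n_0$ is Lemma~\ref{lm:Start}. Extending~\eqref{eq:Np-delta} from dyadic $m$ to all $n\ge n_0$: given $n$, pick the dyadic $m$ with $m\le n<2m$; then $n=m+n'$ with $m/2< n'\le 2m$, so apply Lemma~\ref{lm:Contr} and Corollary~\ref{c:step} one more time exactly as above to transfer the bound from scale $m$ to scale $n$, at the cost of an extra constant factor, which is harmless for the qualitative conclusion $N'_{\rad_n}(\xi_n)\le\delta_n$ with $\rad_n\to\infty$, $\delta_n\to0$.
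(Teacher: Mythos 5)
Your core bootstrapping mechanism is the paper's own: you grow the radii only logarithmically, $\rad_m\sim\sqrt{\beta\log m}$ with $\beta<1$, so that $e^{\rad_m^2/2}=m^{\beta/2}$ is dominated by the $m^{-1/2}$ coming from Lemma~\ref{lm:errorEst}, while the contraction factor $\lambda<1$ of Lemma~\ref{lm:Contr} stays a fixed constant because it depends only on the variance-ratio bound $C$ supplied by~\eqref{VarGrowth}, not on how much of the permitted enlargement $L$ you actually use (enlarging by less is harmless since $N_{\rad''}\le N_{\rad'}$ for $\rad''\le\rad'$). This is precisely the resolution in the paper, which takes $\rad_m\le\frac12\sqrt{\log m}$ and $\delta_m=Am^{-\beta}$ and verifies the recursion of Lemma~\ref{l.list}; your identification of the double-exponential obstruction and its repair, the recursion $\delta_{2m}\le\lambda\delta_m+\eps_m$, the verification of the smallness conditions, and the base case via Lemma~\ref{lm:Start} are all correct.

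The genuine gap is the final passage from dyadic $m=2^kn_0$ to general $n$, and the statement is asserted for all $n\ge n_0$. Your claim that for the dyadic $m$ with $m\le n<2m$ one has $n=m+n'$ with $m/2<n'\le 2m$ is false: $n'=n-m$ ranges over $[0,m)$ and can be $0$, $1$, or anything below $m/2$ (take $n=m+1$). For such an unbalanced split none of the tools apply: Lemma~\ref{lm:Contr} would need a variance-ratio constant of order $m/n'$, hence unbounded; Corollary~\ref{c:step} and Lemma~\ref{lm:errorEst} explicitly require $\frac n2\le n'\le 2n$ and $n,n'\ge n_0$; and already Proposition~\ref{prop:Rmoments} needs $n\le 2n'$ (its proof bounds $\E \bigl(R_{n,n'}^3\Ind_{R_{n,n'}>c_{\kappa}n'}\bigr)$ by $M^{1/3}(n+n')^3c^{2/3}(c_{\kappa}n')^{-\gamma/3}$, which blows up when $n'$ stays bounded while $n$ grows). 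So "one more application of the step" does not transfer the bound off the dyadic sequence. The fix is exactly what the paper does: do not restrict to dyadic scales at all, but run the induction over every integer $m$, splitting it into the nearly equal halves $n=\lceil m/2\rceil$, $n'=\lfloor m/2\rfloor$ so that comparability and the hypotheses of Corollary~\ref{c:step} are automatic, and define $\rad_m,\delta_m$ for all $m$ by the same recursion you wrote (this is Lemma~\ref{l.list} together with the explicit choices $\delta_m=Am^{-\beta}$ and $\rad_m=\min\bigl(L\min(\rad_n,\rad_{n'}),\frac12\sqrt{\log m},(100\delta_m)^{-1/3}\bigr)$). With that change your argument goes through verbatim.
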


Prior to proving it, note that Theorem~\ref{t:proba} follows from Proposition~\ref{p.boot} almost immediately. 

\begin{proof}[Proof of Theorem~\ref{t:proba}]
Assume that the assumptions of Theorem~\ref{t:proba} are satisfied. Due to Proposition~\ref{p.boot}, for any $\rad>0$ for all sufficiently large $n$ we have $\rho_n\ge \rho$, and hence
\[
\lim_{n\to\infty} N'_{\rad}(\txi_{a;n})
=\lim_{n\to\infty} N_{\rad}(\eta_{a;n})
=0,
\]
where
\[
    \eta_{a;n}=\frac{\txi_{a;n}}{\sqrt{\var \txi_{a;n}}}.
\]

In particular, the characteristic functions $\varphi_{\eta_{a;n}}(t)$ of the normalized variables
converge uniformly on compact sets to~$e^{-\frac{t^2}{2}}$. As the weak convergence
of random variables is equivalent (L\'evy's continuity theorem) to the pointwise convergence of their characteristic functions, we have the desired weak convergence
\[
\eta_{a;n}=\frac{\txi_{a;n}}{\sqrt{\var \txi_{a;n}}} \to \mathcal{N}(0,1), \quad n\to \infty.
\]
Moreover, this convergence is actually uniform in~$a$, as the convergence of the characteristic functions is uniform in $a$ on any compact interval $[-\rho,\rho]$ due to the uniform estimate~\eqref{eq:Np-delta}.
\end{proof}

\begin{proof}[Proof of Proposition~\ref{p.boot}]


We are going to construct the sequence $(\rad_n,\delta_n)_{n\ge n_0}$ so that the desired property~\eqref{eq:Np-delta}
can be established by induction on~$n$. Namely, we have the following
\begin{lemma}[Bootstrapping]\label{l.list}
    Let the sequence $(\rad_n,\delta_n)_{n\ge n_0}$, \emr{where $n_0$ is given by Lemma~\ref{l:var-lower}},
    be chosen in such a way that the following conditions hold:
    \begin{itemize}
        \item As $n\to\infty$, one has
        $  \rad_n\to \infty$ and  $\delta_n\to 0$.
        \item For some $n_1\ge 2n_0$, we have
        \[
        \rad_n=\rad'_0, \quad \delta_n= 3 \bC^{\emr{2+\eps}} \quad \text{for all}\quad n=n_0,\dots,n_1,
        \]
        where $\bC$ is given by Corollary~\ref{c.3},
        \[
            \rad'_0=\min \left(\rad_0, \frac{1}{20\bC} \right)
        \]
        and $\rad_0$ is given by Lemma~\ref{lm:Start}.
        \item For every $m>2n_0$, taking $n=\lfloor\frac{m}{2}\rfloor$ and $n'=m-n$,
        one has
            \begin{gather}
                \label{ineq:2} \rad_{m} \le L \min(\rad_n, \rad_{n'}),
                \\
                \label{ineq:3}
                \rad_m^{\emr{2+\eps}} \delta_m \le \frac{1}{100},
                \\
                \label{ineq:4} \left( \lambda \max(\delta_{n}, \delta_{n'}) + 2 \,\frac{K e^{\frac{\rad_{m}^2}{2}}}{m^{\emr{\eps/2}}} \right) \le \delta_{m},
            \end{gather}
            where constants $L$ and $\lambda$ are defined by the conclusion of Lemma~\ref{lm:Contr} for $C=2\left(\frac{\CVar}{C_1}\right)^2$, where $C_1$ is given by Lemma~\ref{l:var-lower}, and $\CVar$ is given by~\eqref{eq:C-2}.
    \end{itemize}
    Then the conclusion of Proposition~\ref{p.boot} holds for this sequence.
\end{lemma}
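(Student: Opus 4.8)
The plan is to prove Lemma~\ref{l.list} by induction on~$n$, showing that~\eqref{eq:Np-delta} --- that is, $N'_{\rad_n}(\xi_n)\le\delta_n$ together with the well-definedness of $N_{\rad_n}(\eta_n)$ --- holds for every $n\ge n_0$ and every sequence $\mgr_1,\dots,\mgr_n\in\mK$, with all constants independent of that sequence; the uniformity is essential and will be maintained throughout. For the base of the induction, the range $n_0\le n\le n_1$, I would simply observe that $\rad_n=\rad'_0\le\rad_0$, so monotonicity of $N_{\rad}$ in~$\rad$ and Lemma~\ref{lm:Start} give
\[
N'_{\rad_n}(\xi_n)=N_{\rad_n}(\eta_n)\le N_{\rad_0}(\eta_n)<3C_3=\delta_n,
\]
and the same lemma provides the well-definedness.

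For the inductive step, fix $m>n_1$ and assume the claim for all $k$ with $n_0\le k<m$. Put $n=\lceil m/2\rceil$ and $n'=m-n=\lfloor m/2\rfloor$, so that $n+n'=m$, $n,n'\in[n_0,m)$ (using $m>n_1\ge 2n_0$), and $\tfrac n2\le n'\le 2n$. Since $\xi_{(n,n+n']}=\log\|T_{(n,n+n']}\|$ is the log-norm of a product of $n'$ independent matrices with distributions $\mgr_{n+1},\dots,\mgr_{n+n'}\in\mK$, the (uniform) inductive hypothesis applies to it as well, yielding $N'_{\rad_n}(\xi_n)\le\delta_n$ and $N'_{\rad_{n'}}(\xi_{(n,n+n']})\le\delta_{n'}$. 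These variables are independent, and by~\eqref{VarGrowth} (valid because $n,n'\ge n_0$) together with $n/2\le n'\le 2n$ their variance ratio stays within the factor $C=2C_2/C_1$ of~$1$. Writing $\rad:=\min(\rad_n,\rad_{n'})$ and using monotonicity, I would then apply Lemma~\ref{lm:Contr} for this~$C$ --- which produces precisely the constants $L,\lambda$ of the statement --- to obtain
\[
N_{L\rad}(\eta_{n,n'})=N'_{L\rad}(\theta_{n,n'})\le\lambda\max\bigl(N'_{\rad}(\xi_n),N'_{\rad}(\xi_{(n,n+n']})\bigr)\le\lambda\max(\delta_n,\delta_{n'}).
\]
Next I would pass from $\eta_{n,n'}$ to $\eta_m=\eta_{n+n'}$ via Corollary~\ref{c:step} at scale $\rad_m$: by~\eqref{ineq:2} one has $\rad_m\le L\rad$, so $N_{\rad_m}(\eta_{n,n'})\le\lambda\max(\delta_n,\delta_{n'})\le\delta_m$ by~\eqref{ineq:4}, whence~\eqref{ineq:3} gives the first hypothesis $\rad_m^3 N_{\rad_m}(\eta_{n,n'})\le\rad_m^3\delta_m\le\tfrac1{100}$, while~\eqref{ineq:4} also forces $2\tfrac{K}{\sqrt m}e^{\rad_m^2/2}\le\delta_m$ and hence, after multiplying by $\rad_m^3/2$ and using~\eqref{ineq:3}, the second hypothesis $\rad_m^3\tfrac{K}{\sqrt m}e^{\rad_m^2/2}\le\tfrac1{200}$. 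Corollary~\ref{c:step} then yields
\[
N'_{\rad_m}(\xi_m)=N_{\rad_m}(\eta_m)\le N_{\rad_m}(\eta_{n,n'})+2\tfrac{K}{\sqrt m}e^{\rad_m^2/2}\le\lambda\max(\delta_n,\delta_{n'})+2\tfrac{K}{\sqrt m}e^{\rad_m^2/2}\le\delta_m,
\]
the last step being exactly~\eqref{ineq:4}; the bound also gives well-definedness of $N_{\rad_m}(\eta_m)$ through Proposition~\ref{p:r-X-Y}. This closes the induction, and since $\rad_n\to\infty$, $\delta_n\to0$, and every estimate is uniform over $\mgr_1,\dots,\mgr_n\in\mK$, Proposition~\ref{p.boot} follows.

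I do not expect a genuine obstacle in this lemma itself: the analytic content lives entirely in the results it invokes --- Lemma~\ref{lm:Contr} (contraction towards Gaussianity for independent sums, with the domain enlarged by~$L$), Corollary~\ref{c:step} and Lemma~\ref{lm:errorEst} (control of the $R_{n,n'}$-perturbation, whose $L^3$-size is of order $(n+n')^{-3/2}$), the two-sided linear variance bound~\eqref{VarGrowth}, and the initial estimate Lemma~\ref{lm:Start}. The points that require care, and which I would check explicitly, are: (i) that $\xi_{(n,n+n']}$ really falls under the inductive hypothesis --- this is why the hypothesis must be stated uniformly over sequences in~$\mK$; (ii) the scale bookkeeping, where $\rad$ contracts to $\min(\rad_n,\rad_{n'})$ before Lemma~\ref{lm:Contr} and then expands by~$L$, making~\eqref{ineq:2} exactly the needed compatibility; and (iii) that the recursion~\eqref{ineq:3}--\eqref{ineq:4} is simultaneously strong enough to verify the two smallness hypotheses of Corollary~\ref{c:step} and to propagate the bound $N_{\rad_m}(\eta_m)\le\delta_m$. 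The construction of an actual sequence $(\rad_n,\delta_n)$ meeting all the listed constraints with $\rad_n\to\infty$, $\delta_n\to0$ is a separate, purely arithmetic matter, to be handled after this lemma.
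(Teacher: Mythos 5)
Your proof is correct and follows essentially the same route as the paper's: induction with base case from Lemma~\ref{lm:Start}, the step via Lemma~\ref{lm:Contr} combined with~\eqref{ineq:2}, and verification of the hypotheses of Corollary~\ref{c:step} from~\eqref{ineq:3}--\eqref{ineq:4} before closing with~\eqref{ineq:4}. Your write-up is in fact slightly more explicit than the paper's on the monotonicity of $N_{\rad}$ in $\rad$, the variance-ratio check justifying $C=2C_2/C_1$, and the condition $\tfrac n2\le n'\le 2n$, but the argument is the same.
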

\begin{proof}
    The proof of~\eqref{eq:Np-delta} is by induction. Namely, the base of the induction is formed by $m=n_0,\dots,n_1$, where the inequality
    \[
    N'_{\rho_m}(\txi_{a;m})\le \delta_m
    \]
    follows from the choice of $\rad'_0$ and Lemma~\ref{lm:Start}.

    Let us make the induction step. Namely, for $m>n_1$ let $n=\lceil\frac{m}{2}\rceil$ and $n'=m-n$; then, $n_0\le n,n'<m$, so the conclusion is already established for these indices.
%
    Hence, due to the induction assumption and inequality~\eqref{ineq:2},
    \[
        N'_{\rad_m/L} (\txi_{a;n}) \le N'_{\rad_n}(\txi_{a;n}) \le \delta_n \quad \text{and} \quad N'_{\rad_m/L} (\txi_{a+n;n'}) \le N'_{\rad_n}(\txi_{a+n;n'}) \le \delta_{n'}.
    \]
    and thus due to the conclusion~\eqref{eq:N-xi-xi-prim} of Lemma~\ref{lm:Contr},
    \begin{equation}\label{eq:N-prim-sum}
        N'_{\rad_m} (\txi_{a;n} + \txi_{a+n;n'})
        \le \lambda \max(\delta_n,\delta_{n'}).
    \end{equation}

    Now, we are going to apply Corollary~\ref{c:step} for $\rad_m,n,n'$.
    First, let us check that its assumptions are satisfied. Indeed,
    multiplying~\eqref{eq:N-prim-sum} by $\rad_m^{\emr{2+\eps}}$ and applying~\eqref{ineq:3} and~\eqref{ineq:4}, we get
    \[
        \rad_m^{\emr{2+\eps}} N'_{\rad_m}(\txi_{a;n}+\txi_{a+n;n'}) \le \rad_m^{\emr{2+\eps}} \delta_m \le \frac{1}{100},
    \]
    so the assumption~\eqref{eq.r3N-eta} is satisfied.
    Next, assumption~\eqref{eq.K-n-sqrt} follows again from~\eqref{ineq:4} and~\eqref{ineq:3}:
    \[
        \frac{K}{m^{\emr{\eps/2}}} \rad_m^{\emr{2+\eps}}
        e^{\frac{\rad_m^2}{2}} \le \rad_m^{\emr{2+\eps}} \delta_m \le \frac{1}{100}.
    \]
    Corollary~\ref{c:step} is thus applicable, and
    hence (again using~\eqref{ineq:4}) we get
    \[
        N'_{\rad_m}(\txi_{a;m}) \le \left( \lambda \max(\delta_{n}, \delta_{n'}) + 2 \frac{K e^{\frac{\rad_{m}^2}{2}}}{m^{\emr{\eps/2}}} \right) \le \delta_{m}.
    \]
    The induction step is complete.
\end{proof}

%

To complete the proof of Proposition~\ref{p.boot}, it remains to construct the sequences
\[
    \rad_m\to \infty, \quad \delta_m \to 0
\]
that satisfy the assumptions of Lemma~\ref{l.list}. Roughly speaking, if we were keeping the radii $\rho_m$ constant, the contraction with the factor~$\lambda$ would effectively allow to bring $\delta_m$ to zero as the additional term $\frac{2K e^{\frac{\rad_m^2}{2}}}{m^{\emr{\eps/2}}}$ then also tends to zero. It suffices now to make the radii $\rad_m$ increase extremely slowly, so that the exponent $e^{\frac{\rad_m^2}{2}}$ would not break this asymptotic vanishing.

Following this idea, we will choose $\rad_m$ so that
\[
    \rad_m \le \frac{1}{2}\sqrt{\emr{\eps} \log m};
\]
such a restriction allows to use
\begin{equation}\label{eq:K-bound}
\frac{K e^{\frac{\rad_m^2}{2}}}{m^{\emr{\eps}/2}} \le \frac{K m^{\emr{\eps}/8}}{m^{\emr{\eps}/2}}< \frac{K}{m^{\emr{\eps}/4}}
\end{equation}
when checking that inequality~\eqref{ineq:4} holds. Next, we let   
\begin{equation}\label{eq.delta-def}
    \delta_m=A m^{-\beta}, \quad m>n_1,
\end{equation}
where the constant $A$ is chosen so that at $m=n_1$ this value coincides with $3\bC^{\emr{2+\eps}}$,
\begin{equation}\label{eq.A-def}
    A=3\bC^{\emr{2+\eps}} \cdot n_1^{\beta},
\end{equation}
and the (sufficiently small) power $\beta>0$ and the (sufficiently large) initial index $n_1$ are yet to be fixed.

Now, choose the exponent $\beta>0$ sufficiently small so that
\[
\lambda \cdot 2^{\beta}<1, \quad \beta<\frac{\emr{\eps}}{4},
\]
and fix $\lambda'\in (2^{\beta}\lambda,1)$.

Then, for all sufficiently large $n_1$ the condition~\eqref{ineq:4} holds and can be proved by induction. Indeed, in the left hand side the first summand is
\[
\lambda \max(\delta_n,\delta_{n'}) \le \lambda \cdot A \left(\frac{m-1}{2}\right)^{-\beta} = 2^{\beta} \lambda \cdot A m^{-\beta} \cdot \left(\frac{m-1}{m}\right)^{-\beta} < \lambda' \delta_m.
\]
The second summand due to~\eqref{eq:K-bound} is at most $K \cdot m^{-\emr{\eps}/4}$, thus it suffices to check for $m>n_1$ the inequality
\[
\lambda' A m^{-\beta} + 2K m^{-\emr{\eps}/4} <  A m^{-\beta},
\]
which can be rewritten as
\begin{equation}\label{eq.AMK}
(1-\lambda') A m^{-\beta} > 2K m^{-\emr{\eps}/4}.
\end{equation}
As $\beta<\frac{\emr{\eps}}{4}$, due to the monotonicity it suffices to check~\eqref{eq.AMK} for $m=n_1$. Now, recall that~\eqref{eq.A-def} is used to determine $A$ for given~$n_1$ and~$\beta$; substituting the value of $A$ from~\eqref{eq.A-def}, we see that~\eqref{eq.AMK} holds for $m=n_1$ once $n_1$ is sufficiently large to ensure
\[
(1-\lambda') \cdot 3\bC^{\emr{2+\eps}}  > 2K n_1^{-\emr{\eps}/4}
\]

We fix a sufficiently large $n_1$ so that~\eqref{eq.AMK} holds, fix the corresponding $A$ (defined by~\eqref{eq.A-def}) and the sequence $(\delta_m)$, defined for $m>n_1$ by~\eqref{eq.delta-def}. Then, we use~\eqref{ineq:3} and~\eqref{ineq:2} to choose the sequence $(\rad_m)$. Namely, for $m>n_1$ we let
\begin{equation}\label{eq:rad-def}
\rad_m = \min \left(L \min(\rad_n,\rad_{n'}), \frac{1}{2}\sqrt{\emr{\eps} \log m}, (100\delta_m)^{-1/\emr{(2+\eps)}}\right).
\end{equation}
Then the inequality $\rad_m\le (100\delta_m)^{-1/\emr{(2+\eps)}}$ implies~\eqref{ineq:3}, and~\eqref{ineq:2} is satisfied automatically. Finally, as
\[
\min\left(\frac{1}{2}\sqrt{\emr{\eps} \log m}, (100\delta_m)^{-1/\emr{(2+\eps)}}\right) \to \infty, \quad m\to \infty,
\]
the sequence $(\rad_m)$ defined by~\eqref{eq:rad-def} also tends to infinity; actually, it will coincide with $\frac{1}{2}\sqrt{\emr{\eps} \log m}$ for all sufficiently large~$m$.  This completes the proof of Lemma~\ref{l.list}.
\end{proof}

We conclude this section with the proof of our main result, Theorem~\ref{t.CLT}.

\begin{proof}[Proof of Theorem~\ref{t.CLT}]
\emr{Assume that the assumptions of Theorem~\ref{t.CLT} are satisfied. Then, due to Lemma~\ref{l:txi-assumptions}, the family $\txi_{a;n}$, defined by \eqref{eq:txi-def}, satisfies the assumptions~(\ref{a:center})--(\ref{a:difference}), while Proposition~\ref{p:large} guarantees that this family satisfies the assumption~\eqref{a:growth}. Hence, for this family the conclusions of Theorem~\ref{t:proba} hold, implying that random variables
\[
\frac{\xi_n - \E \xi_n}{\sqrt{\var{\xi_n}}} = \frac{\txi_{0;n}}{\sqrt{\var{\txi_{0;n}}}}
\]
weakly converge to $\mN(0,1)$. Moreover, the speed of convergence in Theorem~\ref{t:proba} is regulated by the sequences $(\rho_m,\delta_m)$, constructed in Lemma~\ref{l.list}. These sequences require for their construction only constants $C_R$, $\Cxi$, $n_0$, etc., that can be chosen independently of the sequence of measures $\mgr_1,\mgr_2,\dots\in \mK$, as such uniformity holds for all the statements in Section~\ref{s:R-estimates}. Hence, the convergence of normalised measures in Theorem~\ref{t.CLT} is also uniform in the choice of the sequence $\mgr_1,\mgr_2,\dots\in \mK$.}
\end{proof}



\section{CLT for images of vectors and matrix elements}\label{s:CLT-vectors}

This section is devoted to the proof of Theorem~\ref{t:CLT-vectors}.

\begin{proof}[Proof of Theorem~\ref{t:CLT-vectors}]
We start with establishing the first part of the theorem, convergence for normalised log-lengths $\log |T_n v|$. Note that it suffices to show that for every $\pd>0$ there exists a constant $C_{\pd}$ such that for all sufficiently large~$n$,
\begin{equation}\label{eq:P-delta}
\P(\log \|T_n\| - \log |T_n v| >C_{\pd}) <\pd.
\end{equation}
Indeed, as $\var (\log \|T_n\|)$ tends to infinity, once~\eqref{eq:P-delta} is established, it would imply that the difference
\[
\frac{\log \|T_n\|-L_n}{\sqrt{\var (\log \|T_n\|)}}-\frac{\log |T_n v|-L_n}{\sqrt{\var(\log\|T_n\|)}} = \frac{\log \|T_n\|-\log |T_n v|}{\sqrt{\var(\log\|T_n\|)}}
\]
converges to zero in probability. And as adding a random variable that converges in probability to zero does not affect the weak limit, this would imply the convergence
\[
\frac{\log |T_n v|-L_n}{\sqrt{\var(\log\|T_n\|)}} \to \mN(0,1).
\]

Now, without loss of generality we can assume that the vector $v$ is a vector of unit length. The difference $\log \|T_n\|-\log |T_n v| = \Theta(T_n,v)$ can then be estimated using Lemma~\ref{l:Theta-bound}: its conclusion~\eqref{eq:Theta-upper} implies that it suffices to show that for a sufficiently small angle $\alpha=\alpha(\pd)$, one has
\begin{equation}\label{eq:r-not-in-U}
\P (\rep(T_n) \in U_{\alpha}{[v]})<\pd.
\end{equation}
Indeed, once such angle $\rho$ is found, we can take $C_{\pd}:=-\log \sin \alpha$.

We will now proceed along the same lines as in the proof of Lemma~\ref{l:B1-B2}. Namely, Lemma~\ref{l:B-preimage} allows us to approximate the location of $\rep(T_n)$: it is $\frac{1}{\|T_n\|^2}$-close to at least one of the directions $[T_n^{-1} e_1]$, $[T_n^{-1} e_2]$.

Now, for every $\alpha>0$ the probability that such a preimage direction belongs to a given $2\alpha$-neighbourhood of a given direction can be estimated using the log-H\"older estimates~\eqref{eq:log-Holder} that are implied by Theorem~\ref{thm:logHol}. Namely, we apply it with
\[
\msp_0^{(1)}=\msp_0^{(2)}=\delta_{[e_i]}, \quad \mgr_j^{(1)}=\mgr_j^{(2)} = (F_2)_* \mgr_{n+1-j},
\]
where $F_2:A\mapsto f^{-1}_A$, so that the measures $\nu_1=\nu_2$ (defined by~\eqref{eq:random-images-msp}) are the distributions of preimages~$[T_n^{-1} e_i]$. Note that
\[
U_{2\alpha}([v]) \times U_{2\alpha}([v]) \subset \{(x,y) \mid \dist(x,y)<4\alpha\},
\]
and thus
\[
\msp_1(U_{2\alpha}([v]))^2 \le (\msp_1\times \msp_2) ( \{(x,y) \mid \dist(x,y)<4\alpha\} ).
\]
Hence, from~\eqref{eq:log-Holder} we get that for every $i=1,2$
\begin{equation}\label{eq:coord-bound}
\P([T_n^{-1} e_i] \in U_{2\alpha}([v])) \le \sqrt{\CM} \cdot  |\log (4\alpha)|^{-\gamma/2}
\end{equation}
Fixing $\alpha>0$ sufficiently small so that the right hand side of~\eqref{eq:coord-bound} does not exceed $\pd/3$, we see that with the probability at least $1-\frac{2}{3}\pd$ neither of the preimages $[T_n^{-1} e_1]$, $[T_n^{-1} e_2]$ belongs to~$U_{2\alpha}([v])$. Finally, take the number $n$ of matrices sufficiently large so that $\|T_n\|^2> \frac{1}{\alpha}$ with the probability at least $1-\frac{\pd}{3}$; by Lemma~\ref{l:B-preimage} it implies that at least one of the distances $\dist(\rep(T_n), [T_n^{-1} e_i])$ doesn't exceed~$\alpha$. Then altogether with the probability at least $1- \pd$ we have
\[
\dist(\rep(T_n), [v]) \ge \max_{i=1,2}(\dist([v], [T_n^{-1} e_i])- \dist(\rep(T_n), [T_n^{-1} e_i])) \ge 2\alpha-\alpha=\alpha,
\]
implying the desired~\eqref{eq:r-not-in-U}.

The same argument applies to the random variables $\log |(T_n)_{i,j}|$:
it suffices to show that for every $\pd>0$ there exists a constant $C'_{\pd}$ such that for all sufficiently large~$n$,
\begin{equation}\label{eq:P}
\P(\log |T_n e_j| - \log |(T_n)_{i,j}| >C'_{\pd}) <\pd.
\end{equation}
Now, for any pair of indices $i,j=1,2$,
\[
|(T_n)_{i,j}| = |T_n e_j| \cdot \sin \dist ( [T_n e_j], [e_{3-i}] ).
\]
Using the same log-H\"older estimate~\eqref{eq:log-Holder} for the forward dynamics, we choose $\alpha'$ such that $[T_n e_j]$ belongs to $\alpha'$-neighbourhoods of either of $[e_1],[e_2]$  with the probability at most~$\pd$, providing~\eqref{eq:P} and thus completing the proof.
\end{proof}


\section{Unboundedness of Variance: proof of Proposition \ref{p:large}} \label{section:variance}

In this section we prove Proposition \ref{p:large}, i.e. show that under the assumptions of Theorem \ref{t.CLT} variances of $\xi_n$  become arbitrarily large.


In order to do so, we will assume that $n$ is quite large, and will decompose the full product $A_n\dots A_1$ into a several ``long'' groups $D_{m+1},\dots, D_1$, between which some ``short'' compositions are applied:
\[
A_n\dots A_1 = D_{m+1}(B_{\mu_{n_0,m}}\ldots B_{\mu_{1,m}})D_m\ldots D_2(B_{\mu_{n_0,1}}\ldots B_{\mu_{1,1}})D_1.
\]
We will show that (for an appropriate choice of lengths) even conditionally to all $D_1,\dots,D_{m+1}$, the distribution of the log-norm of the product (with high probability) has sufficiently high variance. At the same time, dividing by the product of norms of $D_j$, we get the composition
\[
\frac{D_{m+1}}{\|D_{m+1}\|}(B_{\mu_{n_0,m}}\ldots B_{\mu_{1,m}})\frac{D_m}{\|D_m\|}\ldots \frac{D_2}{\|D_2\|}(B_{\mu_{n_0,1}}\ldots B_{\mu_{1,1}})\frac{D_1}{\|D_1\|},
\]
where all the quotients $\frac{D_j}{\|D_j\|}$ are almost rank-one matrices.

Therefore, we first consider the variance of a distribution of images of a given vector under random linear maps of rank one. In this case it is easier to show that the variance grows, see Lemma \ref{l:eps0}  and Lemma  \ref{l:m-eps0}  below. By continuity, if one replaces random rank one linear maps by random linear maps of large norm, and uses the fact that for a matrix $D\in \SL(2, \mathbb{R})$ with large norm, $\frac{D}{\|D\|}$ is close to a linear map of rank one and norm one, then a lower bound on variances still holds, see Lemma \ref{l:U} and Corollary \ref{c:Q}. Finally, we can complete the proof of Proposition \ref{p:large} by applying the fact that with large probability a composition of a long enough sequence of random $\SL(2, \mathbb{R})$ matrices has a large norm.

Let us now realize this strategy.

Let $Y\subseteq GL_2(\mathbb{R})$ be the space of all linear maps $\mathbb{R}^2\to \mathbb{R}^2$ of norm~$1$ and of rank~$1$. Notice that $Y$ is homeomorphic to the torus $\mathbb{T}^2$; indeed, it follows from the fact that any such map can be represented as a composition of an orthogonal projection to a one-dimensional subspace and a rotation.

\begin{lemma}\label{l:eps0}
There exist $\eps_0>0$ and $n_0\in \mathbb{N}$ such that for any non-zero vector $v\in \mathbb{R}^2$, any $p\in Y$, and any $\mu_1, \mu_2, \ldots, \mu_{n_0}\in \mathcal{K}$ we have
$$
\var \log\left|p\circ(B_{\mu_{n_0}}\ldots B_{\mu_1})v\right|\ge \eps_0.
$$
\end{lemma}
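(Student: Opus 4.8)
The plan is to fix $n_0$ large once and for all, reduce the claim to the positivity of an infimum over a compact finite-length space of configurations, realize that infimum as an attained minimum, and then exclude exact degeneracy at the minimizer by invoking the measures condition. For the reductions: the variance is unchanged when $v$ is rescaled, and every $p\in Y$ has the form $p(z)=\langle z,a\rangle b$ with $|a|=|b|=1$, so I may assume $|v|=1$ and study $W:=\log|\langle Tv,a\rangle|$, where $T=B_{\mu_{n_0}}\cdots B_{\mu_1}$ and $W=\log|Tv|+\log|\cos\angle([Tv],[a])|$. I fix $n_0$ large enough that, uniformly over $\mu_1,\dots,\mu_{n_0}\in\mK$ and over unit vectors, (a) the law of $[Tv]=f_T([v])$ on $\mathbb{RP}^1$ obeys the log-Hölder bound of Theorem~\ref{thm:logHol}, and (b) $\log\|T\|$ is strongly concentrated about its mean $L_{n_0}$, which grows linearly by \cite[Theorem~1.5]{GK}; here I use $\var\log\|T\|\le C_\xi n_0$ from Proposition~\ref{prop:ximoments} and the tail estimate of Lemma~\ref{lm:Rn} applied to $\Theta(T,v)=\log\|T\|-\log|Tv|$. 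Combining $|\log|Tv||\le\log\|T\|$, Minkowski's inequality, and (a), for this fixed $n_0$ the variable $W$ has a (configuration-uniformly) bounded $L^p$-norm for some $p>2$; in particular $W>-\infty$ a.s.\ and the family $\{W^2\}$ is uniformly integrable. It then suffices to prove $\inf\var W>0$ over the compact space $\mathbb{RP}^1\times Y\times\mK^{\,n_0}$.

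For the compactness step: the map $(B_1,\dots,B_{n_0})\mapsto W$ is continuous off the set $\{\langle Tv,a\rangle=0\}$, which is null for every configuration by the previous paragraph, so as the configuration varies continuously the law of $W$ varies weakly; by the uniform integrability, $\E W$ and $\E W^2$ — hence $F(v,p,\mu_1,\dots,\mu_{n_0}):=\var W$ — depend continuously on the configuration. Thus $F$ attains its minimum, and the lemma reduces to checking $F>0$ at a minimizer. Assume instead $F=0$ there; then $W$ is a.s.\ a constant, so either (i) $[Tv]\equiv[a^\perp]$ a.s., or (ii) $|\langle Tv,a\rangle|\equiv\rho>0$ a.s. In case (i), a.s.\ constancy of $[B_{n_0}\cdots B_1 v]$ forces, step by step down the product (projectivizations of $\SL(2,\R)$-matrices are homeomorphisms, so a mixture of push-forwards is a Dirac mass only if every push-forward and the pushed measure already were), the support of $\mu_1$ into a single fibre $\{B:f_B([v])=\mathrm{pt}\}$; taking $\msp_1=\delta_{[v]}$, this contradicts the measures condition for $\mu_1$.

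Case (ii) is the core. Here I would condition on all factors but one and exploit the algebraic structure: for fixed $w\neq0$ the map $B\mapsto\langle Bw,a\rangle$ is an affine coordinate on $\SL(2,\R)$ whose range is all of $\R$, so a.s.\ constancy of $|\langle Tv,a\rangle|$ forces the support of any single factor $\mu_i$ (after conditioning on the others) into the preimage of a union of two parallel lines; intersecting these constraints over the a.s.\ values of the remaining part of the product pins the law of each intermediate product applied to $v$ to a finite set, and this cascades along the product until, finally, some $\mu_i$ — or a uniform probability measure on a finite set manufactured from it — realizes a coupling $(f_A)_*\msp_1=\msp_2$ forbidden by the measures condition. (Equivalently, in the regime where $\rho$ is comparable to $e^{L_{n_0}}$, this is the statement that $\log\|T\|$ cannot be almost surely constant.) This contradiction disposes of case (ii) and completes the proof.

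The main obstacle is exactly case (ii): it is, in miniature, the phenomenon responsible for the technicality of Proposition~\ref{p:large} — that the log-norm of a long non-stationary product cannot be almost surely constant under the measures condition — and the delicate point is to track carefully how that rigidity propagates through the product so as to produce a genuinely forbidden coupling on a single factor; the equidistribution input (Theorem~\ref{thm:logHol}) rules out atoms and hence the simplest degenerations but does not by itself close this case. A minor technical point is the uniform integrability behind the continuity of $F$, which is where the moment exponent in~(\ref{tails}) being larger than $4$ is used.
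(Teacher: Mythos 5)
Your case (ii) is the heart of the lemma, and it is exactly the step you have not closed; the cascade you sketch does not work as stated. From almost-sure constancy $|\langle Tv,a\rangle|\equiv\rho>0$ and the last factor one only extracts that, almost surely, $B_{n_0-1}\cdots B_1v$ lies in $\bigcap_{A\in\supp\mu_{n_0}}A^{-1}(L)$, where $L=\{u:|\langle u,a\rangle|=\rho\}$ is a union of two affine lines; this pins the intermediate projective image into a fixed set of at most two points, and iterating down the product only ever yields \emph{containment} of each intermediate image in a two-point set. Containment is not a deterministic image: different matrices in the support of a factor $\mu_i$ may send the current point to different elements of the two-point set, so no coupling $(f_A)_*\msp_1=\msp_2$ valid for $\mu_i$-a.e.\ $A$ is produced, and the measures condition is never violated by this route (taking the uniform measure on the two-point set does not help, since its image measure need not be deterministic either). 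The paper closes this case by a different mechanism, which is missing from your set-up: $n_0$ is chosen \emph{in advance} via the Atoms Dissolving Theorem (Theorem~\ref{p.max.full}, from [GK]) so that every convolution $\mu_{n_0-1}*\cdots*\mu_1*\msp$ has all atoms of weight $<\tfrac12$; then the measures condition is applied only once, to $\mu_{n_0}$, to show that $\bigcap_{A\in\supp\mu_{n_0}}A^{-1}(L)$ has at most four points (at most two projective points), and ``supported on at most two points'', i.e.\ an atom of weight $\ge\tfrac12$ after $n_0-1$ steps, is an immediate contradiction. This treats $d=0$ and $d>0$ uniformly; your case (i) argument is correct but is subsumed by it. Your choice of $n_0$ (made for integrability purposes) carries no such atom-dissolving property, so even the correct half of your degeneracy analysis is not wired to a usable contradiction for case (ii).

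A secondary gap is the claimed continuity of the variance functional. Theorem~\ref{thm:logHol} controls the law of $f_T([v])$ only at scales $r>\kappa^{n_0}$, so for a fixed finite $n_0$ it does not exclude atoms: if the $\mu_i$ are finitely supported, the law of $[Tv]$ is purely atomic and may charge the kernel direction of $p$, in which case $W=\log|\langle Tv,a\rangle|=-\infty$ with positive probability, the uniform $L^p$-bound and uniform integrability of $W^2$ fail, and the variance is not even finite, let alone continuous. The paper avoids this by letting the functional take the value $+\infty$ on configurations where the image distribution charges $0$ and proving only \emph{lower semicontinuity} (Lemma~\ref{l.semi}), which is all the compactness argument needs, since a lower semicontinuous function on the compact space $\mathbb{RP}^1\times Y\times\mK^{n_0}$ attains its infimum; you should patch your reduction the same way rather than trying to prove genuine continuity.
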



To prove Lemma \ref{l:eps0} we will use a statement from [GK] that was called {\it Atom Dissolving Theorem} there. We will start with a couple of definitions.

\begin{defi}\label{d.defMax}
Denote by $\mathfrak{Max}(\msp)$ the weight of a maximal atom of a probability measure $\msp$. In particular, if $\msp$ has no atoms, then $\mathfrak{Max}(\msp)=0$.
\end{defi}
\begin{defi}
Let $X$ be a metric compact. For a measure $\mgr$ on the space of homeomorphisms $\Homeo(X)$, we say that there is
\begin{itemize}
\item \emph{no finite set with a deterministic image}, if there are no two finite sets $F,F'\subset X$ such that $f(F)=F'$ for $\mgr$-a.e. $f\in \Homeo(X)$;
\item \emph{no measure with a deterministic image}, if there are no two probability measures $\msp,\msp'$ on $X$ such that $f_*\msp=\msp'$ for $\mgr$-a.e. $f\in \Homeo(X)$.
\end{itemize}
\end{defi}

The following statement is a general statement for non-stationary dynamics, ensuring the ``dissolving of atoms'': decrease of the  probability of a given point being sent to any particular point.

\begin{theorem}[Atoms Dissolving Theorem~\emr{1.13} from {\cite{GK}}]\label{p.max.full}
Let $\bK_X$ be a compact set of probability measures on $\Homeo(X)$.
\begin{itemize}
\item
Assume that for any $\mgr\in \bK_X$ there is no finite set with a deterministic image. Then for any $\eps>0$ there exists $n$ such that for any probability measure $\msp$ on $X$ and any sequence $\mgr_1, \ldots, \mgr_n\in \bK_X$ we have
$$
\mathfrak{Max}\left(\mgr_{n}* \dots* \mgr_{1}*\msp\right) <\eps.
$$
In particular, for any probability measure $\msp$ on $X$ and any sequence $\mgr_1, \mgr_2, \ldots\in \bK_X$ we have
$$
\lim_{n\to \infty}\mathfrak{Max}\left(\mgr_{n}* \dots* \mgr_{1}*\msp\right)=0.
$$
\item If, moreover, for any $\mgr\in \bK_X$ there is no measure with a deterministic image, then the convergence is exponential and uniform over all sequences $\mgr_1, \mgr_2, \ldots$ from $K^{\mathbb{N}}$ and all probability measures $\msp$. That is, there exists $\lambda<1$ such that for any $n$, any $\msp$ and any $\mgr_1,\mgr_2,\dots\in\bK_X$
\[
\mathfrak{Max}\left(\mgr_{n}* \dots* \mgr_{1}*\msp\right) < \lambda^n.
\]
\end{itemize}
\end{theorem}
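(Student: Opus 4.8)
The plan is to track the weight of the largest atom under convolutions, show it contracts, and then use compactness of $\bK_X$ to make the contraction uniform (and, under the stronger hypothesis, geometric).

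\textbf{Monotonicity.} Since every $f\in\Homeo(X)$ is a bijection, $\mathfrak{Max}(f_*\msp)=\mathfrak{Max}(\msp)$, and for any $\mgr$ on $\Homeo(X)$
\[
\mathfrak{Max}(\mgr*\msp)=\sup_{y\in X}\int\msp\bigl(\{f^{-1}(y)\}\bigr)\,d\mgr(f)\le\int\sup_{y}\msp\bigl(\{f^{-1}(y)\}\bigr)\,d\mgr(f)=\mathfrak{Max}(\msp),
\]
so $\mathfrak{Max}$ never increases along convolutions. The same manipulation applied to the collision quantity $Q(\msp):=\sum_{x}\msp(\{x\})^{2}$ gives, via Cauchy--Schwarz,
\[
Q(\msp)-Q(\mgr*\msp)=\tfrac12\,\E_{f,g\sim\mgr}\Bigl[\ \sum_x\bigl(\msp(\{x\})-\msp(\{(g^{-1}f)(x)\})\bigr)^{2}\ \Bigr]\ \ge\ 0 ,
\]
with vanishing gap exactly when $f_*\msp_{\mathrm{at}}$ (the image of the normalized atomic part) is $\mgr$-a.s.\ independent of $f$.

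\textbf{Rigidity.} If $Q$ fails to drop, the displayed identity forces $f_*\msp_{\mathrm{at}}$ to equal a fixed measure for $\mgr$-a.e.\ $f$, i.e.\ a \emph{measure with a deterministic image}, which is excluded by the stronger hypothesis. Under the weaker hypothesis one argues with $\mathfrak{Max}$: writing $A(\msp)$ for the set of maximal atoms (finite, of cardinality $\le 1/\mathfrak{Max}(\msp)$), the equality $\mathfrak{Max}(\mgr*\msp)=\mathfrak{Max}(\msp)$ forces $A(\mgr*\msp)\subseteq f(A(\msp))$ for $\mgr$-a.e.\ $f$, so if in addition $|A(\mgr*\msp)|=|A(\msp)|$ then $f(A(\msp))=A(\mgr*\msp)$ for $\mgr$-a.e.\ $f$: a \emph{finite set with a deterministic image}, again excluded. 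Hence at each convolution step \emph{either} $\mathfrak{Max}$ strictly decreases \emph{or} the number of maximal atoms strictly decreases, and since the latter is bounded by $1/\mathfrak{Max}$, the weight $\mathfrak{Max}(\mgr_n*\cdots*\mgr_1*\msp)\to 0$.

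\textbf{Uniformity.} To make this uniform in $\msp$ and in $\mgr_1,\mgr_2,\dots\in\bK_X$ — and, under the stronger hypothesis, to upgrade the strict decrease to a fixed multiplicative factor $\lambda_0<1$ per bounded block of steps, whence the exponential bound $\lambda^{n}$ — one argues by contradiction and compactness: if the contraction failed, there would be $\msp^{(N)}$ and $\mgr^{(N)}_1,\dots,\mgr^{(N)}_N\in\bK_X$ over which $N$ steps decrease the relevant quantity too slowly. Using linearity of $\mgr*\msp$ in $\msp$ together with $\mathfrak{Max}\bigl(\int\nu_x\,d\msp(x)\bigr)\le\int\mathfrak{Max}(\nu_x)\,d\msp(x)$ one may take $\msp^{(N)}$ of a fixed simple type (a Dirac mass, or in the $Q$-argument a purely atomic probability measure after renormalizing the atomic part); passing to subsequences so that the base point and each $\mgr^{(N)}_j$ converge in the (compact) spaces $X$ and $\bK_X$, and invoking continuity of $(\mgr,\msp)\mapsto\mgr*\msp$ in the weak-$*$ topology, the limit realizes the rigid deterministic-image behavior for some $\mgr^{*}_j\in\bK_X$, a contradiction.

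\textbf{Main obstacle.} The delicate point is precisely this last compactness step. The functionals $\mathfrak{Max}$ and $Q$ are \emph{not} weak-$*$ continuous — atomic measures can converge weakly to non-atomic ones, and conversely the weight of the dominant atom can only jump up in a limit — so one has to be careful about which objects are passed to the limit and in what order, and one must carry along enough of the atomic structure (not just the single dominant atom) to land exactly on a forbidden configuration. Organizing the reduction so that the ``bad'' starting measures are of a type that survives the limiting procedure (and so that the rigidity, not merely monotonicity, is what is inherited by the limit) is the heart of the argument; the rest is the elementary monotonicity and book-keeping above.
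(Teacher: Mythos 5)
First, a remark on scope: this statement is not proved in the present paper at all — it is quoted verbatim as Theorem 2.8 of [GK] (arXiv:2210.03805), so there is no in-paper proof to compare against; what follows is an assessment of your argument on its own merits. Your elementary steps are fine: the monotonicity $\mathfrak{Max}(\mu*\nu)\le\mathfrak{Max}(\nu)$, the reduction to Dirac initial measures via $\mathfrak{Max}\bigl(\int\nu_x\,d\nu(x)\bigr)\le\int\mathfrak{Max}(\nu_x)\,d\nu(x)$, and the one-step rigidity statements (equality of $\mathfrak{Max}$ together with equality of the number of maximal atoms forces a finite set with a deterministic image; vanishing of the $Q$-gap forces a measure with a deterministic image) are all correct and are natural first moves.

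However, there are genuine gaps. (i) The sentence ending the Rigidity paragraph — that the dichotomy ``either $\mathfrak{Max}$ strictly decreases or the number of maximal atoms strictly decreases'' yields $\mathfrak{Max}(\mu_n*\cdots*\mu_1*\nu)\to0$ — is false as stated: a non-increasing sequence that decreases strictly at least once in every block of $\lceil 1/\eps\rceil$ steps can still converge to a positive limit, since the dichotomy carries no quantitative decrement. (ii) The compactness step, which you yourself call the heart of the argument, is not actually carried out, and the sketch does not close. A diagonal extraction (using joint continuity of convolution and upper semicontinuity of $\mathfrak{Max}$) does give a single sequence $\mu_1^*,\mu_2^*,\dots\in\bK_X$ and a Dirac start along which $\mathfrak{Max}\ge\eps$ forever; but this is not yet a forbidden configuration, because along that sequence $\mathfrak{Max}$ may still strictly decrease at every step toward a limit $\ge\eps$, so your one-step rigidity never applies. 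To extract an exact deterministic-image finite set one needs a second limiting procedure on the tail measures, and there the mere upper semicontinuity of $\mathfrak{Max}$ (and of $Q$) works against you: the limit measure can acquire a strictly larger maximal atom, destroying the equality $\mathfrak{Max}(\mu*\nu)=\mathfrak{Max}(\nu)$ that the rigidity requires. Naming this obstacle, as you do, is not the same as overcoming it; the missing construction (choosing what to pass to the limit so that exact rigidity, not just the inequality $\ge\eps$, survives) is precisely the substance of the theorem. (iii) The second bullet — the uniform exponential bound $\lambda^n$ under the stronger hypothesis — is only gestured at: a uniform multiplicative contraction per bounded block is a statement about all probability measures $\nu$ simultaneously, and compactness of $\bK_X$ alone does not provide it, since the relevant functionals are not weak-$*$ continuous in $\nu$; this part needs its own argument, which is absent.
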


In the proof below we will only be using the first part of Theorem \ref{p.max.full}.

\begin{proof}[Proof of Lemma~\ref{l:eps0}]
Due to Theorem \ref{p.max.full} and our assumptions regarding the measures from $\mathcal{K}$, there exists $n'\in \mathbb{N}$ such that for any $\mu_1, \mu_2, \dots, \mu_{n'}\in \mathcal{K}$ we have
$$
\mathfrak{Max}\left(\mgr_{n}* \dots* \mgr_{1}*\msp\right) <\frac{1}{2}
$$
for any probability measure $\msp$ on $\mathbb{RP}^1$. To prove Lemma \ref{l:eps0} it is enough to choose $n_0=n'+1$.

Since
 $$\var \log\left|p\circ(B_{\mu_n}\ldots B_{\mu_1})v\right|=\var \log\left|p\circ(B_{\mu_{n}}\ldots B_{\mu_1})\frac{v}{|v|}\right|,$$
without loss of generality we can assume that $|v|=1$ and, slightly abusing the notation, consider it  an element of $\mathbb{RP}^1$. For given $p\in Y$, $v\in \mathbb{R}^2, |v|=1$, $ \{\mu_1, \mu_2, \ldots, \mu_{n_0}\}\in \mathcal{K}^{n_0}$ consider the probability distribution $\chi$ on $[0, +\infty)$ of the random images $\left|p\circ(B_{\mu_{n_0}}\ldots B_{\mu_1})v\right|$.
\begin{lemma}\label{l.semi}
The function $\Phi: \mathbb{RP}^1\times Y\times \mathcal{K}^{n_0}\to \mathbb{R}\cup \{\infty\}$ defined by
  $$
\Phi(v, p, \mu_1, \mu_2, \ldots, \mu_{n_0})=\left\{
  \begin{array}{ll}
    \infty, & \hbox{if $\chi(\{0\})>0$;} \\
    \var \log\left|p\circ(B_{\mu_{n_0}}\ldots B_{\mu_1})v\right|, & \hbox{if $\chi(\{0\})=0$,}
  \end{array}
\right.
$$
is lower semicontinuous.
\end{lemma}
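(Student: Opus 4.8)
The plan is to establish lower semicontinuity of $\Phi$ by a direct $\liminf$ argument on the variance, treating the ``bad'' set where $\chi(\{0\})>0$ carefully. First I would fix a convergent sequence $(v^{(k)}, p^{(k)}, \mu_1^{(k)},\dots,\mu_{n_0}^{(k)}) \to (v, p, \mu_1,\dots,\mu_{n_0})$ in $\mathbb{RP}^1 \times Y \times \mathcal{K}^{n_0}$ and try to show $\Phi(v,p,\vec\mu) \le \liminf_k \Phi(v^{(k)}, p^{(k)}, \vec\mu^{(k)})$. If the $\liminf$ is $+\infty$ there is nothing to prove, so I may pass to a subsequence along which $\Phi(v^{(k)}, p^{(k)}, \vec\mu^{(k)})$ converges to a finite limit; in particular $\chi^{(k)}(\{0\}) = 0$ for all large $k$, i.e.\ each $B_{\mu_1^{(k)}},\dots$ is invertible on the relevant vectors and $p^{(k)}\circ(B_{\mu_{n_0}^{(k)}}\cdots B_{\mu_1^{(k)}})v^{(k)}$ is a.s.\ nonzero.

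The key observation is that the map $(A_1,\dots,A_{n_0}) \mapsto \log|p \circ (A_{n_0}\cdots A_1) v|$ is continuous in $(v,p,A_1,\dots,A_{n_0})$ wherever the argument of the logarithm is nonzero, and tends to $-\infty$ at the zero set. I would write the variance as $\var \log|\cdots| = \tfrac12 \E_{\vec A, \vec A'}\big(\log|p A_{n_0}\cdots A_1 v| - \log|p A'_{n_0}\cdots A'_1 v|\big)^2$, doubling the probability space. The integrand is a nonnegative function that is lower semicontinuous in all parameters (continuous off the zero sets of the two log-arguments, and $+\infty$-valued in the limit there, which only helps the $\liminf$). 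Then I would invoke the portmanteau/Fatou-type lower semicontinuity of integrals of nonnegative lower semicontinuous functions against weakly converging product measures: since $\mu_i^{(k)} \to \mu_i$ weakly (this is exactly convergence in $\mathcal{K}$, a compact set of measures), the product measures $\mu_1^{(k)}\otimes\cdots\otimes\mu_{n_0}^{(k)}\otimes\mu_1^{(k)}\otimes\cdots$ converge weakly, and Fatou's lemma for weak convergence gives
\[
\tfrac12\,\E\big(\log|p A_{n_0}\cdots v| - \log|p A'_{n_0}\cdots v|\big)^2 \le \liminf_k \tfrac12\,\E^{(k)}\big(\cdots\big)^2,
\]
which is precisely $\Phi(v,p,\vec\mu) \le \liminf_k \Phi(v^{(k)},p^{(k)},\vec\mu^{(k)})$. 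One subtlety: if the limiting $\chi(\{0\})>0$, the left side is $\Phi = +\infty$ by definition, but then the integrand above equals $+\infty$ on a positive-measure set, so the inequality still reads $+\infty \le \liminf_k(\cdots)$; this forces the $\liminf$ to be $+\infty$ as well, consistent with lower semicontinuity, so this case is automatically fine and in fact cannot occur along our chosen subsequence.

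The main obstacle I anticipate is making the ``Fatou for weak convergence'' step fully rigorous when the integrand is unbounded and $+\infty$-valued: one cannot directly apply the classical portmanteau inequality, which is usually stated for bounded lower semicontinuous functions. The standard fix is to truncate — replace the integrand $g$ by $g \wedge R$ for $R>0$, apply the bounded portmanteau inequality $\int (g\wedge R)\, d(\otimes\mu_i) \le \liminf_k \int (g\wedge R)\, d(\otimes\mu_i^{(k)}) \le \liminf_k \int g\, d(\otimes\mu_i^{(k)})$, and then let $R\to\infty$ using the monotone convergence theorem on the left. A second technical point is that $g$ is only lower semicontinuous, not continuous, because of the blow-up at the zero set of the log-argument; but this is exactly the ``good'' direction for lower semicontinuity, and one can either approximate $g$ from below by continuous functions or note that $g\wedge R$ is itself a bounded lower semicontinuous function, for which the portmanteau inequality holds (it holds for bounded l.s.c.\ functions, not just continuous ones). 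With these two truncation/approximation maneuvers the argument goes through, and since $n_0$ is already fixed by Lemma~\ref{l:eps0} the number of factors is a harmless finite constant throughout.
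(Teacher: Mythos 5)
Your overall route --- writing $\var X=\tfrac12\E(X-X')^2$ on a doubled probability space and applying a truncated portmanteau/Fatou inequality for weakly converging product measures --- is genuinely different from the paper's proof (which works directly with the pushforward law $\chi$ and a two-case analysis), and it does handle the cases $\chi(\{0\})=0$ and $0<\chi(\{0\})<1$ once repaired. But as written there is a real gap. First, your claim that the integrand $\bigl(\log|pA_{n_0}\cdots A_1v|-\log|pA'_{n_0}\cdots A'_1v|\bigr)^2$ is lower semicontinuous because it is ``$+\infty$-valued in the limit'' at the zero sets is false on the set where \emph{both} log-arguments vanish: there the difference is an $\infty-\infty$ indeterminacy, and for instance along the diagonal $A_i'=A_i$ the integrand stays $0$ while approaching such a point. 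To preserve lower semicontinuity you are forced to assign the value $0$ (not $+\infty$) on this doubly-vanishing set. Second, and more seriously, with that necessary convention your argument yields nothing when $\chi(\{0\})=1$, i.e.\ when $pA_{n_0}\cdots A_1v=0$ almost surely: then the doubly-vanishing set has full measure under the limiting product law, the limit integral of the corrected integrand is $0$, and the truncated portmanteau chain gives only the vacuous bound $0\le\liminf_k\Phi^{(k)}$, whereas $\Phi=\infty$ at the limit point. This corner case cannot be waved away: without the measures condition the lemma is simply false (take every $\mu_i=\delta_{\mathrm{Id}}$ and $p(v)=0$; nearby parameters with $p'(v')\neq 0$ have variance $0$), so any correct proof must use that condition, and yours never does. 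The fix is short: if $\chi(\{0\})=1$, then by Fubini $(f_{A_{n_0}})_*\nu=\delta_{[\ker p]}$ for $\mu_{n_0}$-a.e.\ $A_{n_0}$, where $\nu$ is the law of $[A_{n_0-1}\cdots A_1v]$, contradicting the measures condition --- this is exactly the point at which the paper's own proof invokes it.

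A smaller technical point: in your portmanteau step the integrand depends on $k$ through $(v^{(k)},p^{(k)})$, so the inequality for a \emph{fixed} lower semicontinuous function against weakly converging measures does not apply verbatim. You should treat $(v,p)$ as integration variables too, applying the truncated portmanteau inequality to the jointly lower semicontinuous function against the measures $\delta_{(v^{(k)},p^{(k)})}\otimes\mu_1^{(k)}\otimes\cdots\otimes\mu_{n_0}^{(k)}\otimes\mu_1^{(k)}\otimes\cdots\otimes\mu_{n_0}^{(k)}$, which converge weakly to the corresponding limit. With these repairs (joint formulation, value $0$ on the doubly-vanishing set, and exclusion of $\chi(\{0\})=1$ via the measures condition), your proof goes through and is a legitimate alternative to the paper's direct analysis of $\chi$, which instead splits into the cases $\chi(\{0\})>0$ (where mass near $0$ and mass above a level $\tau$ force the variance of nearby laws to blow up) and $\chi(\{0\})=0$ (where one truncates $\log\chi$ to $[-T,T]$).
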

\begin{proof}
  Notice that $\chi$ depends continuously on $(v, p, \mu_1, \mu_2, \ldots, \mu_{n_0})$ in weak-$*$ topology.

  Let us consider the cases when $\chi(\{0\})>0$ and when $\chi(\{0\})=0$ separately.

Assume first that $\chi(\{0\})>0$. We want to show that given $M>0$, for any sufficiently small perturbation $\chi'$ of $\chi$ we have $ \var \log \chi'>M$. Notice that the measures condition implies that $\chi$ cannot be concentrated exclusively at $0\in \mathbb{R}$. Hence for some $\tau>0$ we have $\chi[\tau, +\infty)>0$. If $\chi'$ is a probability distribution that is sufficiently close to $\chi$, then $\chi'[\tau/2, +\infty)$ is not less than $\frac{1}{2}\chi[\tau, +\infty)$, and the $\chi'$-weight of a small neighborhood of the origin is at least $\frac{1}{2}\chi(\{0\})$. Choosing that neighborhood small enough guarantees that $\var \log \chi'>M.$

Assume now that $\chi(\{0\})=0$ and $\var\log\chi<\infty$. Then
$$
\var \log\chi=\lim_{T\to \infty}\var \left[(\log \chi)|_{[-T, T]}\right],
$$
so for any $\eps>0$, for some large enough $T>0$ we have
$$
\var \left[(\log \chi)|_{[-T, T]}\right]>\var \log\chi -\frac{\eps}{2}.
$$
Therefore, for any $\chi'$ that is sufficiently close to $\chi$ we have
$$
\var \log\chi'\ge \var \left[(\log \chi')|_{[-2T, 2T]}\right]\ge \var \left[(\log \chi)|_{[-T, T]}\right]-\frac{\eps}{2}>\var\log\chi - {\eps}.
$$
The case when $\chi(\{0\})=0$ and $\var\log\chi=\infty$ can be treated similarly.
\end{proof}
The space $\mathbb{RP}^1\times Y\times \mathcal{K}^{n_0}$ is compact. Hence, Lemma \ref{l.semi} implies that it is enough to show that $\Phi>0$ to ensure that for some $\eps_0>0$ we have $\Phi\ge \eps_0>0$.

Suppose this is not the case, and for some unit vector $v$, a linear map $p\in Y$, and $\mu_1, \ldots, \mu_{n_0}\in \mathcal{K}$ we have
$$
\var \log\left|p\circ(B_{\mu_{n_0}}\ldots B_{\mu_1})v\right|=0.
$$
Then for some $d\ge 0$  with probability 1 we have $\left|p\circ(B_{\mu_{n_0}}\ldots B_{\mu_1})v\right|=d.$ That means that $B_{\mu_{n_0}}\ldots B_{\mu_1}v$ has to belong to $L=\{u\ |\ |p(u)|=d\}$, which is a line (if $d=0$) or a union of two lines (if $d>0$). This implies that $\mu_1\times \mu_2\times \ldots \times \mu_{n_0-1}$-almost surely the image $B_{\mu_{n_0-1}}\ldots B_{\mu_1}v$ must belong to the set $\cap_{A\in \supp(\mu_{n_0})}A^{-1}(L)$.
\begin{figure}[h]
\includegraphics{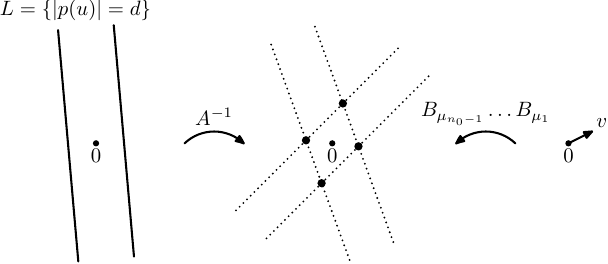}
\caption{The set $L$ and its preimages}\label{fig:projections}
\end{figure}
Since the measure $\mu_{n_0}$ must satisfy the measure condition, this intersection must consist of at most of four points (see Fig.~\ref{fig:projections}), whose projectivization gives at most two points on $\mathbb{RP}^1$. But this would imply that if $\msp$ is an atomic measure on $\mathbb{RP}^1$ at the point corresponding to the initial vector $v$, then $\mgr_{n_0-1}* \dots* \mgr_{1}*\msp$ is a measure supported on at most two points, which contradicts  the choice of $n_0$ above. This completes the proof of Lemma \ref{l:eps0}.
%
\end{proof}

\begin{lemma}\label{l:m-eps0}
For any $m\in \mathbb{N}$, any $\{\mu_{1, i}, \ldots, \mu_{n_0, i}\}_{i=1, \ldots, m}\in \mathcal{K}^{mn_0}$, and any $\{p_1, \ldots, p_{m+1}\}\in Y^{m+1}$ we have
$$
\var\log\|p_{m+1}(B_{\mu_{n_0,m}}\ldots B_{\mu_{1,m}})p_m\ldots p_2(B_{\mu_{n_0,1}}\ldots B_{\mu_{1,1}})p_1\|\ge \eps_0m.
$$
\end{lemma}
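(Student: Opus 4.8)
The plan is to use that every $p_j$ has rank one, so the whole product degenerates to a single rank-one operator whose norm factors as a product over the ``links'' of the chain; taking logarithms then turns $\log\|\cdot\|$ into a \emph{sum} of independent random variables, each of exactly the form controlled by Lemma~\ref{l:eps0}.

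Concretely, I would first write each rank-one norm-one map as $p_j(x)=\langle x,a_j\rangle\, b_j$ with unit vectors $a_j,b_j\in\mathbb{R}^2$, and abbreviate $C_i:=B_{\mu_{n_0,i}}\ldots B_{\mu_{1,i}}$, so that the operator in Lemma~\ref{l:m-eps0} is $P=p_{m+1}C_m p_m\ldots p_2 C_1 p_1$. Telescoping through the composition from right to left — after $p_1$ the running vector is a scalar multiple of $b_1$, after $C_1$ it is that scalar times $C_1 b_1$, and each subsequent application of $p_{j+1}$ multiplies the running scalar by $\langle C_j b_j,a_{j+1}\rangle$ and resets the direction to $b_{j+1}$ — one obtains the closed form
\[
P(x)=\Big(\prod_{j=1}^{m}\langle C_j b_j,a_{j+1}\rangle\Big)\,\langle x,a_1\rangle\, b_{m+1}.
\]
Since $|a_1|=|b_{m+1}|=1$ this gives $\|P\|=\prod_{j=1}^{m}\big|\langle C_j b_j,a_{j+1}\rangle\big|$, and since $|b_{j+1}|=1$ we may rewrite $\big|\langle C_j b_j,a_{j+1}\rangle\big|=\big|p_{j+1}(C_j b_j)\big|$, hence
\[
\log\|P\|=\sum_{j=1}^{m}\log\big|p_{j+1}\circ(B_{\mu_{n_0,j}}\ldots B_{\mu_{1,j}})\,b_j\big|.
\]

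Finally, each summand is precisely the random variable appearing in Lemma~\ref{l:eps0}, applied with the non-zero vector $v=b_j$, the map $p=p_{j+1}\in Y$, and the measures $\mu_{1,j},\ldots,\mu_{n_0,j}\in\mathcal{K}$, so its variance is at least $\eps_0$. The $m$ summands depend on pairwise disjoint subfamilies of the independent matrices $B_{\mu_{k,i}}$, hence are independent, so the variance of their sum is the sum of the variances and is thus at least $\eps_0 m$. I expect the only step with real content to be the closed-form identity for $P$; everything else is bookkeeping plus a direct appeal to Lemma~\ref{l:eps0} and additivity of variance over independent summands. One point deserves a remark: $\langle C_j b_j,a_{j+1}\rangle$ may vanish with positive probability (so that $\log\|P\|=-\infty$ on a set of positive measure), but this is exactly the degenerate case already absorbed into Lemma~\ref{l:eps0} via the convention $\var=+\infty$, and there the asserted inequality holds trivially.
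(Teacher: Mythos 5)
Your proposal is correct and follows essentially the same route as the paper: writing each $p_j$ as a rank-one tensor (your $\langle\cdot,a_j\rangle\,b_j$ is the paper's $v_j\otimes\ell_j$), telescoping to show $\log\|P\|=\sum_{j=1}^m\log|p_{j+1}(C_j b_j)|$, and then invoking Lemma~\ref{l:eps0} together with independence of the summands over disjoint blocks of matrices. Your closing remark about the degenerate case $\log\|P\|=-\infty$ is a sensible clarification that the paper leaves implicit, but it does not change the argument.
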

\begin{proof}
As each $p_j$ is a unit norm rank~$1$ matrix, it can be written as
\[
p_j=v_j \otimes \ell_j, \quad \text{where} \quad v_j\in\R^2, \quad \ell_j\in (\R^2)^*, \quad |v_j|=|\ell_j|=1.
\]
Now, let
\[
\tB_j:=B_{\mu_{n_0,j}}\ldots B_{\mu_{1,j}}
\]
be the $j$-th intermediate product. Then for the product
\[
T=p_{m+1}\tB_m p_m\ldots p_2 \tB_1 p_1
\]
one has for any $v\in \R^2$
\[
T(v) = v_{m+1} \cdot \ell_{m+1}(\tB_m v_m) \cdot \dots \cdot \ell_{2}(\tB_1 v_1) \cdot \ell_1(v),
\]
and hence
\begin{equation}\label{eq:T-norm}
\log \|T\| = \sum_{j=1}^m \log |\ell_{j+1}(\tB_j v_j) |.
\end{equation}
Right hand side of~\eqref{eq:T-norm} is a sum of $m$ independent random variables, and the variance of each of them is at least $\eps_0$ due to Lemma~\ref{l:eps0}. Thus, the variance of $\log\|T\|$ is at least~$m\eps_0$.
\end{proof}

\begin{lemma}\label{l:U}
There exists a neighborhood $U$ of the compact $\mK^{n_0m}\times Y^{m+1}$ in $\mK^{n_0m}\times Mat_2(\mathbb{R})^{m+1}$ such that for any
$$
\bar\mu\times \{D_j\}_{j=1, \ldots, m+1}\in U,
$$
where $\bar\mu=\{\mu_{1, i}, \ldots, \mu_{n_0, i}\}_{i=1, \ldots, m}$ and $D_j\in Mat_2(\mathbb{R})$, we have
$$
\var\log\|D_{m+1}(B_{\mu_{n_0,m}}\ldots B_{\mu_{1,m}})D_m\ldots D_2(B_{\mu_{n_0,1}}\ldots B_{\mu_{1,1}})D_1\|\ge \frac{\eps_0m}{2}
$$
\end{lemma}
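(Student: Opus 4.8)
The plan is to deduce Lemma~\ref{l:U} from Lemma~\ref{l:m-eps0} by a compactness-plus-continuity argument, treating the matrices $D_j$ as perturbations of rank-one norm-one maps $p_j\in Y$. First I would fix $m$ and consider the function
\[
\Psi(\bar\mu, D_1,\dots,D_{m+1}) = \var\log\bigl\|D_{m+1}\tB_m D_m\cdots D_2 \tB_1 D_1\bigr\|,
\]
where $\tB_j = B_{\mu_{n_0,j}}\ldots B_{\mu_{1,j}}$, defined on $\mK^{n_0 m}\times \mathrm{Mat}_2(\R)^{m+1}$, and extended by the value $+\infty$ whenever the product is degenerate with positive probability (exactly as in Lemma~\ref{l.semi}). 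The key point is to establish that $\Psi$ is lower semicontinuous at every point of the compact set $\mK^{n_0 m}\times Y^{m+1}$, where by Lemma~\ref{l:m-eps0} it takes values $\ge \eps_0 m$. Once lower semicontinuity is in hand, standard compactness gives a neighborhood $U$ of $\mK^{n_0 m}\times Y^{m+1}$ on which $\Psi > \tfrac{\eps_0 m}{2}$, which is precisely the conclusion.

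The main work is therefore the lower semicontinuity of $\Psi$ at a point $(\bar\mu_0, p_1,\dots,p_{m+1})\in \mK^{n_0 m}\times Y^{m+1}$. The underlying product norm $\|D_{m+1}\tB_m D_m\cdots \tB_1 D_1\|$ is a continuous function of all the matrices involved (it is a composition of continuous maps), so the random variable $\log\|\cdots\|$, as a measurable function of the $A_i$'s, depends continuously in distribution on the parameters: the pushforward law on $\R\cup\{-\infty\}$ varies weak-$*$ continuously in $(\bar\mu, D_1,\dots,D_{m+1})$. The variance of $\log$ of that law, however, is \emph{not} continuous — it can only jump down in the limit, because mass escaping to $-\infty$ (which happens exactly when the limiting product is rank-dropping on a set of positive probability) contributes arbitrarily large variance to nearby configurations. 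This is the same phenomenon already handled in Lemma~\ref{l.semi}: one splits into the case where the limiting law has an atom at $-\infty$ (equivalently $\|\cdot\|=0$ with positive probability), where nearby variances blow up, and the case where it does not, where one truncates $\log$ to a large compact interval $[-T,T]$, uses weak-$*$ convergence to control the truncated variance, and lets $T\to\infty$. I would simply invoke/adapt Lemma~\ref{l.semi} with the obvious modifications: the role of the ``projection target'' is now played by the full matrix product rather than $p\circ(B_{\mu_{n_0}}\ldots B_{\mu_1})v$.

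The step I expect to require the most care is verifying that the exceptional set — configurations $(\bar\mu, D_1,\dots,D_{m+1})$ for which the product vanishes with positive probability — does not intersect the compact set $\mK^{n_0 m}\times Y^{m+1}$, so that $\Psi$ is genuinely finite (indeed $\ge\eps_0 m$) there, and that the lower-semicontinuity bookkeeping near such configurations goes through. At a point of $Y^{m+1}$ the matrices $p_j$ are rank one but norm one, and the computation in Lemma~\ref{l:m-eps0} shows $\log\|T\|=\sum_j \log|\ell_{j+1}(\tB_j v_j)|$, which is a.s. finite precisely because the measures condition prevents $\tB_j v_j$ from a.s. landing in the kernel of $\ell_{j+1}$; this finiteness is exactly what Lemma~\ref{l:eps0} already guarantees (the variance, hence the value, is finite and $\ge\eps_0$). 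So on the compact set the value is finite and bounded below by $\eps_0 m$, and lower semicontinuity then does the rest. The remaining details — that the weak-$*$ continuity of the pushforward law is uniform enough over the compact parameter set, and that the neighborhood $U$ can be chosen inside $\mK^{n_0 m}\times\mathrm{Mat}_2(\R)^{m+1}$ as stated — are routine compactness arguments.
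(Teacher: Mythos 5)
Your proposal is correct and follows essentially the same route as the paper: the paper also deduces the lemma from Lemma~\ref{l:m-eps0} by compactness of $\mK^{n_0m}\times Y^{m+1}$ together with lower semicontinuity of the variance as a function of the distribution (the point handled by Lemma~\ref{l.semi}). You merely spell out the semicontinuity and degenerate-product bookkeeping in more detail than the paper's two-sentence argument does.
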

\begin{proof}
On $\mK^{n_0m}\times Y^{m+1}$ this variance is bounded from below by~$\eps_0 m$ due to Lemma~\ref{l:m-eps0}. As the set $\mK^{n_0m}\times Y^{m+1}$ is compact, and the variance is a lower-semicontinuous function of a distribution, there exists a neighbourhood~$U$ of this compact on which the variance is at least~$\frac{m\eps_0}{2}$.
\end{proof}
\begin{coro}\label{c:Q}
There exists $Q$ such that for any $D_1,\dots, D_{m+1}\in \SL(2,\R)$ with $\|D_j\|\ge Q, \, j=1,\dots, m+1$ one has
$$
\var\log\|D_{m+1}(B_{\mu_{n_0,m}}\ldots B_{\mu_{1,m}})D_m\ldots D_2(B_{\mu_{n_0,1}}\ldots B_{\mu_{1,1}})D_1\|\ge \frac{\eps_0m}{2}
$$
\end{coro}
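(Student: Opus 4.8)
The plan is to deduce Corollary~\ref{c:Q} from Lemma~\ref{l:U} by exploiting two elementary observations: the variance of a random product is unchanged under multiplying it by a deterministic positive scalar, and for a matrix $D\in\SL(2,\R)$ of large norm the normalized matrix $D/\|D\|$ is uniformly close to the set $Y$ of rank-one maps of norm~$1$. For the second point I would use the singular value decomposition $D=O\,\mathrm{diag}(\|D\|,\|D\|^{-1})\,O'$ with $O,O'$ orthogonal; then $D/\|D\|=O\,\mathrm{diag}(1,\|D\|^{-2})\,O'$ lies at operator-norm distance exactly $\|D\|^{-2}$ from the rank-one matrix $p:=O\,\mathrm{diag}(1,0)\,O'\in Y$, with no dependence on the angular data $O,O'$. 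Consequently, if $\|D_j\|\ge Q$ for all $j$, the tuple $\bar\mu\times\{D_1/\|D_1\|,\dots,D_{m+1}/\|D_{m+1}\|\}$ lies within distance $Q^{-2}$ of the compact set $\mK^{n_0m}\times Y^{m+1}$ inside $\mK^{n_0m}\times Mat_2(\R)^{m+1}$.

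Next I would note that factoring the scalars out of the norm adds only the deterministic constant $\sum_{j=1}^{m+1}\log\|D_j\|$ to $\log\|D_{m+1}(B_{\mu_{n_0,m}}\cdots B_{\mu_{1,m}})D_m\cdots D_2(B_{\mu_{n_0,1}}\cdots B_{\mu_{1,1}})D_1\|$, so this quantity has the same variance as the corresponding product with each $D_j$ replaced by $D_j/\|D_j\|$. Since the open neighbourhood $U$ furnished by Lemma~\ref{l:U} contains the compact set $\mK^{n_0m}\times Y^{m+1}$, a standard compactness argument produces an $\eps_1>0$ such that the $\eps_1$-neighbourhood of that compact set is contained in $U$. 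Fixing $Q$ large enough that $Q^{-2}<\eps_1$ then forces the normalized tuple into $U$, and Lemma~\ref{l:U} delivers the bound $\ge\eps_0 m/2$ for the variance of the normalized product, hence for the original one.

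This is essentially a continuity-and-compactness wrap-up of the work already carried out in Lemmas~\ref{l:eps0}--\ref{l:U}, so I do not expect a genuine obstacle. The only items needing a little care are the uniformity of the estimate that the operator-norm distance from $D/\|D\|$ to its nearest point of $Y$ equals $\|D\|^{-2}$ (which, as noted, holds irrespective of the angular part of $D$, so a single $Q$ suffices), the scale-invariance of the variance (immediate, since the $D_j$ are deterministic), and the bookkeeping that $Q$ is permitted to depend on the fixed integer~$m$, exactly as the neighbourhood $U$ of Lemma~\ref{l:U} does — which is harmless for the later use of the corollary in the proof of Proposition~\ref{p:large}.
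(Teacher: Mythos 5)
Your proposal is correct and takes essentially the same route as the paper: factor out the deterministic scalars $\sum_j\log\|D_j\|$ (which leave the variance unchanged) and choose $Q$ large enough that the normalized matrices $D_j/\|D_j\|$ force the tuple into the neighbourhood $U$ provided by Lemma~\ref{l:U}. The only difference is cosmetic --- you make the closeness of $D/\|D\|$ to $Y$ quantitative via the singular value decomposition and spell out the compactness argument, whereas the paper simply invokes $\frac{D}{\|D\|}\to Y$ as $\|D\|\to\infty$.
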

\begin{proof}
\begin{multline}
\log\|D_{m+1}(B_{\mu_{n_0,m}}\ldots B_{\mu_{1,m}})D_m\ldots D_2(B_{\mu_{n_0,1}}\ldots B_{\mu_{1,1}})D_1\| = \\
=\log\| \tD_{m+1}(B_{\mu_{n_0,m}}\ldots B_{\mu_{1,m}})\tD_m\ldots \tD_2(B_{\mu_{n_0,1}}\ldots B_{\mu_{1,1}})\tD_1\|+ \sum_{j=1}^m \log \|D_j\|,
\end{multline}
where $\tD_j:=\frac{D_j}{\|D_j\|}$. On the other hand, as $\|D\|\to\infty$ for $D\in \SL(2,\R)$, one has $\frac{D}{\|D\|} \to Y$, so it suffices to choose $Q$ sufficiently large to ensure that
\[
(\{\mgr_{i,k}\}_{1\le i\le n_0, 1\le k\le m},(\tD_1,\dots,\tD_{m+1}) )\in U
\]
once $\|D_j\|\ge Q$ for all $j=1,\ldots, m+1$, where $U$ is provided by Lemma~\ref{l:U}.
\end{proof}

\begin{proof}[Proof of Proposition~\ref{p:large}]
First, fix $n_0$ and $\eps_0$ given by Lemma~\ref{l:eps0}. Then, choose and fix $m$ such that $\frac{m\eps_0}{4}>c$.

Now, take a sufficiently large $Q$ provided by Corollary~\ref{c:Q}. 
It follows from \cite[{Theorem~2.2}]{G} that for a sufficiently large $n_2$ one has
\[
\forall n'\ge n_2 \quad \forall \mgr_1,\dots,\mgr_{n'} \in \mK\quad \Prob_{\mgr_1,\dots,\mgr_{n'}} (\|A_{n'}\dots A_1\| \ge Q) \ge 1- \frac{1}{2(m+1)}.
\]
Now, take $n_3:=n_2(m+1)+n_0 m$. Then, for any $n\ge n_3$ and any $\mgr_1,\dots,\mgr_n\in\mK$ we can decompose the product $A_n\dots A_1$ as
\[
A_n\dots A_1 = D_{m+1} \tB_m D_m \dots D_2 \tB_1 D_1,
\]
where each $D_j$ is a product of at least $n_2$ matrices $A_i$, and each $\tB_j$ is a product of~$n_0$ of $A_i$'s.

This implies that with the probability at least $\frac{1}{2}$ one has $\|D_j\|\ge Q$ for all $j$, and hence the variance of the distribution conditional to such $D_j$ is at least $\frac{m\eps_0}{2}$. Thus, we finally have
\[
\var \xi_n \ge \E_{D_1,\dots D_{m+1}} \var (\xi_{n} \mid D_1,\dots D_{m+1}) \ge \frac{1}{2} \cdot \frac{m\eps_0}{2} = \frac{m\eps_0}{4}>c.
\]
\end{proof}


\section{Examples}\label{s:examples}

In this section, we discuss examples, showing that in the non-stationary setting, assuming only second moments to be uniformly bounded does not suffice to obtain the convergence to the Gaussian distribution. In particular, it shows that the moment assumption in Theorem~\ref{t.CLT} is optimal.

We start with an example addressing the classical CLT setting on sums of independent random variables. For every $\eps>0$, consider a random variable $\xi_{\eps}$, taking values
\begin{equation}\label{eq:eps-xi}
\xi_{\eps} = \begin{cases}
1 & \text{with probability $\frac{1-\eps}{2}$}
\\
-1 & \text{with probability $\frac{1-\eps}{2}$}
\\
\frac{10}{\sqrt{\eps}} & \text{with probability $\eps$}.
\end{cases}
\end{equation}
Now, the second moments of these random variables are uniformly bounded:
\[
\E \xi_{\eps}^2 < 1+ \eps\cdot \frac{10^2}{\eps} = 101.
\]

\begin{example}\label{ex:no-CLT-1D}
Take a sequence $n_j$, satisfying $n_{j+1}>n_j^6$ for all $j$, and let $\eps_j:=\frac{1}{n_j}$. Consider the sequence of independent random variables
\[
\xi_{1,1},\dots, \xi_{1,n_1};\xi_{2,1},\dots, \xi_{2,n_2}; \dots ; \xi_{j,1} \dots, \xi_{j,n_j};\dots
\]
where all the random variables $\xi_{j,i}$, $i=1,\dots,n_j$ have the distribution~\eqref{eq:eps-xi} with $\eps=\eps_j$. Then, though this is a sequence of independent random variables with uniformly bounded second moments, their sums are not asymptotically normal.
\end{example}

\begin{proof}
Consider the sums within the groups,
\[
S_j:=\xi_{j,1} +\dots + \xi_{j,n_j}.
\]
Note that $\frac{1}{\sqrt{n_j}}S_j$ converge in law to a law that is the sum of the Gaussian law $\mathcal{N}(0,1)$ and of the 10-scaled Poisson distribution with the parameter~$1$. Indeed, the Poisson component comes from the variables taking values $\frac{10}{\sqrt{\eps_j}}$, and conditionally to the places where these ``large'' values occur, the part that is left satisfies the classical~CLT.

Thus, the limit law is multi-modal (see Fig.~\ref{fig:multi}), and hence the sums $S_j$ are not asymptotically normal.

\begin{figure}
\includegraphics[width=7cm]{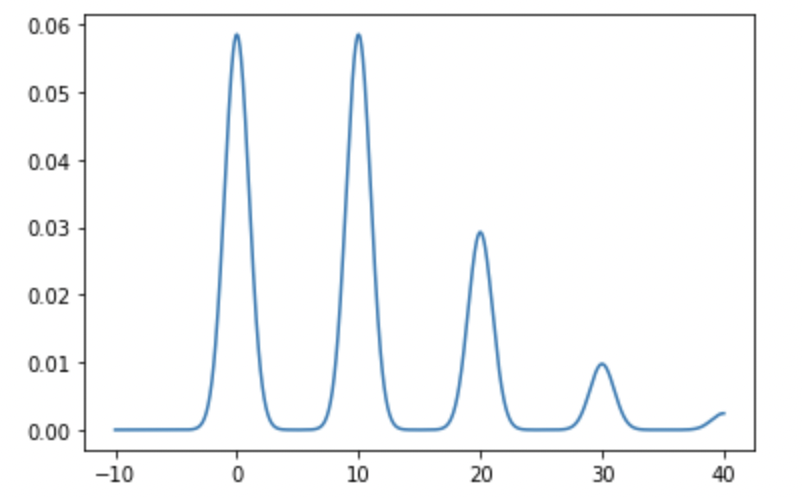}
\caption{Density for the limit law in Example~\ref{ex:no-CLT-1D}}\label{fig:multi}
\end{figure}

Finally, the growth condition $n_j>n_{j-1}^6$ shows that $\frac{S_1+\dots+S_{j-1}}{\sqrt{n_j}}$ converges to zero uniformly, hence the same non-normal distribution is the limit of the sequence
\[
\frac{\xi_{1,1}+\dots+\xi_{j,n_j}}{\sqrt{n_j}} = \frac{S_1+\dots+S_j}{\sqrt{n_j}}.
\]
\end{proof}

Now, for every $\eps\in [0, \frac{1}{2}]$ let $\mgr_{\eps}$ be a measure on $\SL(2,\R)$ that is the law of the random matrix
\[
B_{\eps}= R_{\beta_1} \left(\begin{matrix} r_{\eps} & 0 \\ 0 \,  & r_{\eps}^{-1} \end{matrix}\right) R_{\beta_2},
\]
where $\beta_1$, $\beta_2$ and $r_{\eps}$ are independent, $\beta_1$ and $\beta_2$ are uniformly distributed on $[0,2\pi]$, and
\begin{equation}\label{eq:r-def}
r_{\eps}=\begin{cases}
2 & \text{with probability } \, 1-\eps
\\
e^{\frac{10}{\sqrt{\eps}}} & \text{with probability } \, \eps.
\end{cases}
\end{equation}
These measures then form a compact set $\mK$ of probability measures on~$\SL(2,\R)$, on which the second logarithmic moment is uniformly bounded:
\[
\E (\log \|B_{\eps}\|)^2 < (\log 2)^2 + \eps\cdot \frac{10^2}{\eps} < 101.
\]
Moreover, due to the presence of random rotations by $\alpha$ and $\beta$ the measures condition (stated in Sec.~\ref{s:prelim}) is also satisfied.

Now, denote by $\lambda$ the Lyapunov exponent for the product of the matrices corresponding to~$\mu_0$, and by~$\sigma^2$ the corresponding variance in the CLT. We then have the following example.
\begin{example}\label{ex:no-CLT-2D}
Take lengths $n_j$ and the associated values $\eps_j$ as in Example~\ref{ex:no-CLT-1D}, and take the sequence in which these measures are repeated by groups,
\[
\underbrace{\mu_{\eps_1},\dots, \mu_{\eps_1}}_{n_1 \, \text{times}}; \underbrace{\mu_{\eps_2},\dots, \mu_{\eps_2}}_{n_2 \, \text{times}}; \dots ; \underbrace{\mu_{\eps_j},\dots, \mu_{\eps_j}}_{n_j \, \text{times}}; \dots.
\]
Let $T_n$ be the associated products of independent random matrices, and denote $N_j:=n_1+\dots+n_j$ the index at which the $j$-th group ends. Then the random variables
\[
\frac{1}{\sqrt{n_j}} (\log \|T_{N_j}\| - n_j \lambda)
\]
converge in law to the sum of a Gaussian distribution $\mathcal{N}(0,\sigma^2)$ and of the 10-scaled Poisson distribution with parameter~$1$. In particular, this limit is non-normal, and thus Theorem~\ref{t.CLT} cannot hold for such a product.
\end{example}

\begin{proof}
Let us first instead of the log-norms of matrices $\log \|T_n\|$ fix an initial vector $v_0$ and consider log-lengths $\log |T_n v_0|$ of its images $v_n:=T_n v_0$. Note that increments $\eta_n:=\log |v_{n}|-\log |v_{n-1}|$ of this sequence are then \emph{independent} random variables. Indeed, due to the pre-composition with the rotation $R_{\beta_2}$, the distribution of the increment of the log-norm,
\begin{equation}\label{eq:incr}
\log |B_{\eps} v| - \log |v| = \log \frac{|B_{\eps}v|}{|v|},
\end{equation}
does not depend on the choice of the initial nonzero vector~$v$. Hence, each new ($n$-th) increment $\log \frac{|v_{n+1}|}{|v_n|}$, conditioned to all the previous matrices (that determine the corresponding image~$v_n$ and all the preceding increments) has the same distribution, and thus these increments are independent.

Now, the distribution of the increment~\eqref{eq:incr} is very close to the one in Example~\ref{ex:no-CLT-1D}: it is a mix of a given non-degenerate bounded law (corresponding to the distribution~$\mu_0$) with the probability $1-\eps$ (the case if $r_\eps$ takes value~$2$), and of the law that is associated to the application of the diagonal matrix associated to~$e^{\frac{10}{\sqrt{\eps}}}$. The latter law is concentrated around~$\frac{10}{\sqrt{\eps}}$: as $\eps\to 0$, this law, rescaled by $\sqrt{\eps}$, converges to the constant~$10$. Meanwhile, to the former law the usual Central Limit Theorem is applicable, and the sum of $n_j$ such independent random variables, from which $n_j\lambda$ is subtracted, after division by~$\sqrt{n_j}$ converges to $\mathcal{N}(0,\sigma^2)$.

Also, note that in the same way as in Example~\ref{ex:no-CLT-1D}, it suffices to study the product only of matrices corresponding to the $j$-th group. Indeed, the log-norm of the product of the previous ones is does not exceed $2n_{j-1}\cdot 10 \sqrt{n_{j-1}}=20n_{j-1}^{3/2}$, while $n_{j-1}<n_j^{1/6}$, and thus this upper bound is $o(\sqrt{n_j})$.

Hence, we have the convergence in law of the rescaled log-laws
\begin{equation}\label{eq:T-law}
\frac{1}{\sqrt{n_j}} (\log |T_{N_j} v_0| - n_j \lambda)
\end{equation}
to the sum of $\mathcal{N}(0,\sigma^2)$ and of the 10-scaled Poisson distribution with the parameter~$1$; we denote this distribution~$\mathcal{D}$.

Finally, take the initial vector $v_0$ to be chosen randomly uniformly on the unit circle (and independently from the product~$T_{N_j}$). Conditionally to each choice of $v_0$, the law of~\eqref{eq:T-law} is the same and converges to $\mathcal{D}$ as $j\to \infty$. At the same time, the log-norm $\log \|T_n\|$ is close to $\log |T_n v_0|$ for most (in the sense of the Lebesgue measure) vectors~$v_0$ on the circle. Namely, due to Lemma~\ref{l:Theta-bound},
\begin{equation}\label{eq:T-n-v-0}
\left| \log |T_n v_0| - \log \|T_n\|\, \right| \le |\log \sin \dist ([v_0],\rep(T_n)) |,
\end{equation}
and as $v_0$ is independent from $T_n$ and the distribution of its direction~$[v_0]$ is uniform, the distribution of the random variable in the right hand side of~\eqref{eq:T-n-v-0} does not depend on~$n$. In particular, for every $R$ the probability that the right hand side exceeds $R$ is at most~$e^{-R}$.



Hence, the convergence in law of the log-lengths~\eqref{eq:T-law} implies also the convergence of
\begin{equation}\label{eq:T-norm}
\frac{1}{\sqrt{n_j}} (\log \|T_{N_j}\| - n_j \lambda)
\end{equation}
to the same distribution~$\mathcal{D}$. Indeed, for $v_0$ chosen independently from $T_{N_j}$, taking $R_j=\sqrt[4]{n_j}$, we see that the random variables~\eqref{eq:T-law} and~\eqref{eq:T-norm} differ at most by $\frac{R_j}{\sqrt{n_j}}=n_j^{-1/4}=o(1)$ on the set of the probability $1-e^{-R_j}=1-o(1)$. In particular, this convergence implies that the sequence of random variables~$\log \|T_n\|$ is not asymptotically normal.
%
%
\end{proof}

\end{document}